\makeatletter \@addtoreset{equation}{section} \makeatother
\renewcommand\thefigure{\thesection.\@arabic\c@figure}
\renewcommand\thetable{\thesection.\@arabic\c@table}
\newtheorem{theorem}{Theorem}[section]
\newtheorem{lemma}[theorem]{Lemma}
\newtheorem{proposition}[theorem]{Proposition}
\newtheorem{corollary}[theorem]{Corollary}
\newcommand{\mc}[1]{{\mathcal #1}}
\newcommand{\bb}[1]{{\mathbb #1}}
\newcommand{\<}{\langle}
\renewcommand{\>}{\rangle}
\DeclareMathOperator{\supp}{supp}
\title{Universality of KPZ equation}
\author{Milton Jara}
\address{CEREMADE \\ Universit\'e Paris-Dauphine\\
Place du Mar\'echal de Lattre de Tassigny\\
Paris CEDEX 75775\\
France\\
}
\email{jara@ceremade.dauphine.fr}
\author{Patr\'{\i}cia Gon\c{c}alves}
\address{CMAT, Centro de Matemática da Universidade do Minho, Campus de Gualtar, 4710-057 Braga, Portugal}
\email{patg@math.uminho.pt}
\subjclass[2000]{60K35,60G60,60F17,35R60}
\keywords{Density fluctuations, exclusion process, KPZ equation, universality}
\begin{document}

\begin{abstract}
We introduce the notion of energy solutions of the KPZ equation. Under minimal assumptions, we prove that the density fluctuations of one-dimensional, weakly asymmetric, conservative particle systems with respect to the stationary states are given by energy solutions of the KPZ equation. As a consequence, we prove that the Cole-Hofp solutions are also energy solutions of the KPZ equation.
\end{abstract}

\maketitle

\section{Introduction}

In a seminal paper \cite{KPZ}, Kardar, Parisi and Zhang proposed a phenomenological model for  the stochastic evolution of the profile of a growing interface $h_t(x)$. The so-called KPZ equation has the following form in one dimension:
\[
\partial_t h = D \Delta h + a \big(\nabla h\big)^2 + \sigma {\mc W}_t,
\]
where $\mc W_t$ is a space-time white noise and the constants $D$, $a$, $\sigma$ are related to some thermodynamic properties of the interface. The quantity $h_t(x)$ represents the {\em height} of the interface at the point $x \in \bb R$. From a mathematical point of view, this equation is ill-posed, since the solutions are expected to look locally like a Brownian motion, and in this case the nonlinear term does not make sense, at least not in a classical sense. 
This equation can be solved at a formal level using the Cole-Hopf transformation $Z_t(x) = \exp\{a/D h_t(x)\}$, which transforms this equation into the stochastic heat equation
\[
\partial_t Z = D \Delta Z + a \sigma /D Z {\mc W}_t.
\]
This equation is now linear, and mild solutions can be easily constructed. We will call these solutions {\em Cole-Hopf} solutions. It is widely believed that the physically relevant solutions of the KPZ equation are the Cole-Hopf solutions. However, the KPZ equation has been so resistant to any attempt to mathematical rigor, that up to now it has not even been proved that Cole-Hopf solutions satisfy the KPZ equation in any meaningful sense. Some interpretations that allow rigorous results are proved to give non-physical solutions \cite{Chan}. Up to our knowledge, the best effort in this direction corresponds to the work of Bertini and Giacomin \cite{BG}. In that work, the authors prove two results. First they prove that the Cole-Hopf solutions can be obtained as the limit of a sequence of mollified versions of the KPZ equation. And secondly, they proved that the Cole-Hopf solution appears as the scaling limit of the fluctuations of the current for the weakly asymmetric simple exclusion process (WASEP), giving mathematical support to the physical relevance of the Cole-Hopf solution.

In dimension $d=1$, a conservative version of the KPZ equation can be obtained by defining $\mc Y_t = \nabla h_t$:
\[
\partial_t \mathcal Y_t = D\Delta \mathcal Y_t + a \nabla \mc Y_t^2 + \sigma \nabla {\mc W}_t.
\]
This equation has (always at a formal level!) a spatial white noise as an invariant solution. In this case is even clearer that some procedure is needed in order to define $\mc Y_t^2$ in a proper way. 

Since the groundbreaking works of Johansson \cite{Joh} and Baik, Deift and Johansson \cite{BDJ}, a new approach to the analysis of the KPZ equation has emerged. The general strategy is to describe various functionals of one-dimensional asymmetric, conservative systems in terms of determinantal formulas. These determinantal formulas turn out to be related to different scaling limits appearing in random matrix theory. We refer to the expository work \cite{FS} for further references and more detailed comments, and to \cite{SS}, \cite{ACQ} for recent advances.

Another approach to the analysis of fluctuations of one-dimensional conservative systems was proposed by Balazs and Seppalainen \cite{BS}. They call their approach {\em microscopic concavity/convexity}  and it is exploited in \cite{BQS} in order to prove that the Cole-Hopf solution of the KPZ equation has the scaling exponents predicted by physicists. 

The main drawback of all these approaches is the lack of robustness. The microscopic Cole-Hopf transformation used in \cite{BG} works only for the WASEP, and any other interaction different from the exclusion principle rules out this approach. Determinantal processes appear in a natural way for particle systems that can be described  by {\em non-intersecting paths}. The microscopic concavity/convexity property can be defined only for attractive systems, and it has been proved only for the asymmetric simple exclusion process (ASEP) and the totally asymmetric, nearest-neighbors zero-range process under very restrictive assumptions. Moreover, up to our knowledge all the approaches to the KPZ equation go through Bertini and Giacomin construction of the Cole-Hopf solutions (see however \cite{QV}).

It is widely believed in the physics community that the KPZ equation governs the large-scale properties  of one-dimensional, asymmetric, conservative systems in great generality. The microscopic details of each model should only appear through the values of the constants $D$, $a$ and $\sigma$. In this article we provide a new approach which is robust enough to apply for a wide family of one-dimensional, asymmetric systems. The payback of such a general approach comes at the level of the results: we are not able to prove the precise results of Bertini and Giacomin, and we can not recover the detailed results obtained by the random matrix theory approach. However, our approach is robust enough to give information about sample path properties of the solutions of the KPZ equation.

As a stochastic partial differential equation, the main problem with the KPZ equation is the definition of the square $\mc Y_t^2$. Spatial white noise is an invariant solution of the equation, and it is expected that physically relevant solutions look locally like white noise.

Our first contribution is the notion of {\em energy solutions} of the KPZ equation (see Section \ref{s1.3}). Various attempts to rigorously define a solution of the KPZ equation have been made. One possibility is to regularize the noise $\mc W_t$ and then to turn the regularization off. For the regularized problem, several properties, like well-posedness of the Cauchy problem and existence of invariant measures can be proved \cite{Sin}, \cite{EKMS}. However, the available results hold in a window which is still far from the white noise $\mc W_t$. Another possibility is to regularize the nonlinearity $\nabla \mc Y_t^2$ \cite{D-PD}. One more time, this procedure gives well-posedness in a window which is far from the KPZ equation. Yet another possibility corresponds to define the nonlinear term through a sort of Wick renormalization \cite{HLOUZ}, \cite{Ass2}. However, this procedure does not lead to solutions with the right scaling properties \cite{Chan}. Our notion of energy solutions is strong enough to imply some regularity properties of the solutions which allow to justify some formal manipulations.

We introduce the notion of energy solutions of the KPZ equation in order to state in a rigorous way our second contribution. Take a one-dimensional, weakly asymmetric conservative particle system and consider the rescaled space-time fluctuations of the density field $\mc Y_t^n$ (see the definition in Section \ref{s1.2}). The strength of the asymmetry is of order $1/\sqrt n$. For the speed-change simple exclusion process considered in \cite{FHU} and starting from an stationary distribution, we prove that any limit point of $\mc Y_t^n$ is an energy solution of the KPZ equation. The only ingredients needed in order to prove this result are a sharp estimate on the spectral gap of the dynamics of the particle system restricted to finite boxes (stated in Proposition \ref{p6}) and a strong form of the equivalence of ensembles for the stationary distribution (see Proposition \ref{p9}). Therefore, our approach works, modulo technical modifications, for any one-dimensional, weakly asymmetric conservative particle system satisfying these two properties. In particular, our approach is suitable to treat models like the zero-range process and Ginzburg-Landau model in dimension $d=1$, for which the methods mentioned before fail dramatically. Our approach also works for models with finite-range, non-nearest neighbor interactions with basically notational modifications.

We consider in this article speed-change exclusion processes satisfying the so-called {\em  gradient condition}. Notice that we need to know the invariant measures of the model in order to state the equivalence of ensembles. It has been proved \cite{Nag} for the speed-change exclusion process that the invariance of a Gibbs measure under the symmetric dynamics is preserved by introducing an asymmetry, if and only if the model satisfies the gradient condition. It is only at this point that we need the gradient condition. In particular, our approach also works for weakly asymmetric systems for which the invariant measures are known explicitly, even if the gradient condition is not satisfied.
In view of this discussion, we say that energy solutions of the KPZ equation are {\em universal}, in the sense that they arise as the scaling limit of the density in one-dimensional, weakly asymmetric conservative systems satisfying fairly general, minimal assumptions.

In order to prove this theorem, we introduce a new mathematical tool, which we call {\em second-order Boltzmann-Gibbs principle}. The usual Boltzmann-Gibbs principle, introduced in \cite{BR} and proved in \cite{D-MPSW} in our context, basically states that the space-time fluctuations of any field associated to a conservative model can be written as a linear functional of the density field $\mc Y_t^n$. Our second-order Boltzmann-Gibbs principle states that the first-order correction of this limit is given by a singular, quadratic functional of the density field. It has been proved that in dimension $d \geq 3$, this first order correction is given by a white noise \cite{CLO}. It is conjectured that this is also the case in dimension $d=2$ and in dimension $d=1$ if our first-order correction is null. 

The rest of this paper is organized as follows. In Section \ref{s1} we give precise definitions of the model considered here and we state the results proved in the rest of the article. In Section \ref{s1.1} we define the speed-change exclusion process and we state some of its basic properties. In Section \ref{s1.2} we give an overview of various scaling limits of the density of particles for the speed-change, simple exclusion process, and in particular we state Bertini and Giacomin's result. In Section \ref{s1.3} we give rigorous definitions of what we understand by weak and energy solutions of the KPZ equation and we state our main result. In few words, our main result states that the density fluctuation field is tight, and any limit point is an energy solution of the KPZ equation. We state two corollaries, the first one gives the H\"older exponent of the energy solutions of the KPZ equation. The second gives an answer of the following open problem in \cite{BG}: does the Cole-Hopf solution of the KPZ equation actually satisfies the KPZ equation in any meaningful way? Applying our main result to the WASEP, we prove that the stationary Cole-Hopf solution is an energy solution of the KPZ equation. In Section \ref{s1.4}, we define the stochastic growth model associated to the speed-change, simple exclusion process and we restate the definitions of Section \ref{s1.3} in terms of this growth model. We prove a central limit theorem for the current through a bond. For simplicity, we assume that the drift has zero average, but our results remain true if we look at the current across a characteristic. Starting from this result we prove a convergence result for the height fluctuations on this stochastic growth model.

In Section \ref{s3} we state and prove the second-order Boltzmann-Gibbs principle, which is is the main technical innovation of this article. In Section \ref{s3.1} we review Kipnis-Varadhan and spectral gap inequalities and the equivalence of ensembles, and we state Propositions \ref{p7} and \ref{p9}. We point out here that the equivalence of ensembles we need is of second-order and in consequence it is finer than the result usually found in the literature. For the reader's convenience we give in Appendix \ref{A1} the proof of this result in our simple case of Bernoulli uniform measures. In Section \ref{s3.2} we prove the second-order Boltzmann-Gibbs principle, only relying in Propositions \ref{p7} and \ref{p9}. The proof follows from a multiscale analysis introduced in \cite{Gon}. The multiscale analysis has two steps. The first one, which we call the {\em seed} is reminiscent of the well-known one-block estimate. In the second step we apply a key {\em iterative bound} to go from a microscopically big block to a macroscopically small block. 

In Section \ref{s2} we prove Theorem \ref{t1}. The proof follows the classical scheme to prove convergence theorems in probability. In Section \ref{s2.1} we prove tightness of the density fluctuation fields and in Section \ref{s2.2} we prove that any limit point is a stationary energy solution of the KPZ equation. We conjecture that energy solutionst starting from the stationary state are unique in distribution. Conditioned to this uniqueness result, convergence follows. 

In Section \ref{s4.0} we prove the convergence results for the current and height fluctuation fields. The central limit theorem for the current follows from an idea of Rost and Vares \cite{RV} and we follow the approach of \cite{JL} and \cite{Gon}. The height fluctuation field formally corresponds to the integral of the density fluctuation field. The convergence of the height fluctuation field does not follow directly from the convergence of the density, since we need to deal with the constant of integration, which is a non-trivial process that we relate with a sort of mollified current process.

\section{Notation and results}
\label{s1}

\subsection{The model}
\label{s1.1}
Let $\Omega =\{0,1\}^{\bb Z}$ be the state space of a continuous-time Markov chain $\eta_t$ which we will define as follows. We say that a function $f: \Omega \to \bb R$ is {\em local} if there exists $R= R(f)>0$ such that $f(\eta)=f(\xi)$ for any $\eta, \xi \in \Omega$ such that $\eta(x) = \xi(x)$ whenever $|x| \geq R$. In other words, we say that $f$ is local if $f(\eta)$ depends only on a finite number of coordinates of $\eta$ (in this case at most $2R+1$). Let $c: \Omega \to \bb R$ be a non-negative function. We assume the following conditions on $c$:

\begin{itemize}
\item[i)] {\em Ellipticity}: There exists $\epsilon_0>0$ such that $\epsilon_0 \leq c(\eta) \leq \epsilon_0^{-1}$ for any $\eta \in \Omega$. 
\item[ii)] {\em Finite range}: The function $c(\cdot)$ is local.

\item[iii)] {\em Reversibility}: For any $\eta, \xi \in \Omega$ such that $\eta(x) = \xi(x)$ whenever $x \neq 0,1$, $c(\eta) = c(\xi)$. 

\end{itemize}

For any $x \in \bb Z$ let $\tau_x : \Omega \to \Omega$ be the translation in $x$: $\tau_x\eta(z) = \eta(x+z)$ for any $\eta \in \Omega$ and any $z \in \bb Z$. For a function $f:\Omega \to \bb R$ we define $\tau_x f:\Omega \to \bb R$ as  $\tau_x f(\eta) = f(\tau_x \eta)$ for any $\eta \in \Omega$. We will also assume a fourth condition, which is the most restrictive one:

\begin{itemize}
\item[iv)] {\em Gradient condition}: There exists a local function $h: \Omega \to \Omega$ such that $c(\eta)(\eta(1)-\eta(0)) = \tau_1 h(\eta)- h(\eta)$ for any $\eta \in \Omega$. 
\end{itemize}

The exclusion process (possibly asymmetric) with speed change is defined as the Markov process $\{\eta_t^n;t \geq 0\}$ generated by the operator $L_n$, whose action over local functions $f:\Omega \to \bb R$ is given by 
\[
L_n f(\eta) = n^2 \sum_{x \in \bb Z} c_x(\eta) \big\{p_n \eta(x)(1-\eta(x+1)) + q_n \eta(x+1)(1-\eta(x))\big\} \nabla_{x,x+1} f(\eta),
\]
where $n \in \bb N$ 
\footnote{Here and below we use the convention $\bb N =\{1,2,\dots\}$},
$c_x(\eta) = \tau_x c(\eta)$, $\nabla_{x,x+1} f(\eta) = f(\eta^{x,x+1})-f(\eta)$, $p_n$ and $q_n$ are non-negative constants such that $p_n+q_n=1$ (eventually we will choose $p_n-q_n =a/\sqrt n$ with $a \neq 0$) and $\eta^{x,x+1}$ is given by
\[
\eta^{x,x+1}(z) =
\begin{cases}
\eta(x+1),& z=x\\
\eta(x), & z=x+1\\
\eta(z), & z\neq x,x+1.\\
\end{cases}
\]

In order to explain the meaning of conditions i)-iv) let us assume by now that $p_n=q_n=1/2$. In this case the process $\eta_t^n$ is said to be {\em symmetric}. Condition i) ensures that the process is well defined (see Chapter 1 of \cite{Lig} for a comprehensive discussion) for any choice of $p_n$, $q_n$.
For $\rho \in [0,1]$ let $\nu_\rho$ be the Bernoulli product measure in $\Omega$ of parameter $\rho$. This means that for any two finite, disjoint sets $A, B \in \bb Z$,
\[
\nu_\rho(\eta(x)=1 \;\forall x \in A, \eta(y) =0 \;\forall y \in B) = \rho^{|A|}(1-\rho)^{|B|}, 
\]
where $|A|$, $|B|$ denote the cardinality of the sets $A$, $B$ respectively. Under condition iii), the measures $\{\nu_\rho; \rho \in [0,1]\}$ are invariant and reversible with respect to the evolution of $\eta_t^n$. Under condition i), these measures are also ergodic with respect to the evolution of $\eta_t^n$. Condition i) also tells us that the dynamics of $\eta_t^n$ is comparable to the dynamics of the simple exclusion process without speed change: let $L_n^{ex}$ be the generator associated to the choice $c(\cdot) \equiv 1$. Then,
\begin{equation}
\label{ec0.1}
\epsilon_0 \int f(-L_n f)d\nu_\rho \leq \int f(-L_n^{ex}f)d\nu_\rho \leq \epsilon_0^{-1} \int f(-L_n f)d\nu_\rho
\end{equation}
for any local function $f$. We will return to the meaning of this bound later. 

When the process $\eta_t^n$ is asymmetric (that is, when $p_n \neq q_n$), it is not true in general that the measures $\nu_\rho$ are invariant with respect to the evolution of $\eta_t^n$. In fact, according to \cite{Nag}, the family $\{\nu_\rho; \rho \in [0,1]\}$ is invariant with respect to the evolution of $\eta_t^n$ if and only if the condition iv) is satisfied. It is exactly for this reason that we assume the restrictive condition iv). In this case, the measures $\nu_\rho$ are no longer reversible with respect to the dynamics.

\subsection{Scaling limits}
\label{s1.2}
In this section we recall various scaling limits previously obtained for the density of particles with respect to the process $\eta_t^n$. These results are known in the literature as {\em hydrodynamic limits}. First we recall a law of large numbers for the density of particles. Let $\pi_t^n(dx)$ be the positive measure in $\bb R$ defined as
\[
\pi_t^n(dx) = \frac{1}{n} \sum_{x \in \bb Z} \eta_t^n(x) \delta_{x/n}(dx),
\]
where $\delta_{x/n}(dx)$ is the Dirac mass at $x/n \in \bb R$. The process $\pi_t^n(dx)$ is known as the empirical density measure associate to the process $\eta_t^n$.  When the distribution of $\eta_0^n$ is equal to $\nu_\rho$ with $\rho \in [0,1]$ then the distribution of $\eta_t^n$ is also equal to $\nu_\rho$ for any later time $t>0$. In particular, the sequence of measures $\{\pi_t^n(dx); n\in \bb N\}$ converges in probability to the measure $\rho dx$ when $n \to \infty$ for any $t \geq 0$. 

Let $u_0: \bb R \to [0,1]$ be a strictly positive, continuous by parts function such that there exists $\rho \in (0,1)$ for which $\int |u_0(x) -\rho|dx<+\infty$. Let $\{\mu^n;n \in \bb N\}$ be a sequence of Bernoulli product measures in $\Omega$, defined by the relation $\mu^n(\eta(x)=1) = u_0(x/n)$.
Let us denote by $\bb P_{\mu^n}$ the distribution of the process $\eta_t^n$ with initial distribution $\mu^n$. The following theorem is known as the hydrodynamic limit of the process $\eta_t^n$:

\begin{proposition}[\cite{GPV,KOV}]
\label{p1}
Let us take $p_n-q_n=a/n$ with $a \in \bb R$ and $n$ big enough. Let $u_0: \bb R \to [0,1]$ and $\{\mu^n;n \in \bb N\}$ be as above. Then $\pi_t^n(dx)$ converges in probability to the measure $u(t,x) dx$ with respect to $\bb P_{\mu^n}$, where $\{u(t,x); t \geq 0, x \in \bb R\}$ is the solution of the  {hydrodynamic equation}
\begin{equation}
\label{echid}
\begin{cases}
\partial_t u &= 1/2 \Delta \varphi(u) - a \nabla \beta(u)\\
u(0,\cdot) &= u_0(x)
\end{cases}
\end{equation}
and $\varphi(\rho) = \int h d\nu_\rho$, $\beta(\rho) = \chi(\rho)\int c d\nu_\rho$.
\end{proposition}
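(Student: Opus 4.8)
We recall that this is a standard weakly asymmetric hydrodynamic limit, proved by the entropy method of Guo, Papanicolaou and Varadhan~\cite{GPV} in the gradient setting of~\cite{KOV}. For $G\in C_c^\infty(\bb R)$ put $\pi_t^n(G)=\tfrac1n\sum_x\eta_t^n(x)G(x/n)$ and let $M_t^{n,G}=\pi_t^n(G)-\pi_0^n(G)-\int_0^t L_n\pi_s^n(G)\,ds$ be the associated Dynkin martingale. Since the dynamics is conservative, $L_n\eta(x)=W^n_{x-1,x}-W^n_{x,x+1}$ for an instantaneous current $W^n_{x,x+1}$, so one summation by parts gives $L_n\pi_s^n(G)=\tfrac1n\sum_x\big(G(\tfrac{x+1}{n})-G(\tfrac xn)\big)W^n_{x,x+1}$. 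Writing $p_n=\tfrac12+\tfrac a{2n}$, $q_n=\tfrac12-\tfrac a{2n}$ and splitting the current into its symmetric and antisymmetric parts, the gradient condition~iv) turns the symmetric part into a discrete gradient of $\tau_x h$: explicitly $W^n_{x,x+1}=-\tfrac{n^2}{2}\big(\tau_{x+1}h-\tau_x h\big)+\tfrac{an}{2}\,\tau_x g$ with $g(\eta)=c(\eta)\big(\eta(0)+\eta(1)-2\eta(0)\eta(1)\big)$ local. A second summation by parts yields
\[
L_n\pi_s^n(G)=\frac1{2n}\sum_x G''(x/n)\,\tau_x h(\eta_s^n)+\frac a{2n}\sum_x G'(x/n)\,\tau_x g(\eta_s^n)+o_n(1),
\]
so the drift is $O(1)$ and of ``gradient'' type, which is exactly what makes the hydrodynamic limit tractable.

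Next come the a priori estimates. The carr\'e du champ of $M^{n,G}$ is $O(1/n)$, so the martingale part vanishes in $L^2$; together with the uniform bound on the drift this gives, via Aldous' criterion, tightness of $\{\pi_\cdot^n\}$ in $D([0,T],\mc M_+(\bb R))$ for the vague topology. Since the gradient condition keeps each $\nu_\rho$ invariant (by~\cite{Nag}), the relative entropy $H(\mu_t^n\mid\nu_\rho)$ stays of order $n$, and by ellipticity~\eqref{ec0.1} this produces the Dirichlet form bound $\int_0^T\langle\sqrt{f_s^n},n^2(-L_n^{\mathrm{sym}})\sqrt{f_s^n}\rangle_{\nu_\rho}\,ds=O(n)$, where $L_n^{\mathrm{sym}}$ is the self-adjoint part of $L_n$ and the antisymmetric part is absorbed as a lower order perturbation. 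Finally $\pi_0^n\to u_0(x)\,dx$ in probability, a law of large numbers for the product measure $\mu^n$.

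Now we identify limit points. Let $\pi_\cdot$ be one. Since $\pi_t^n([a,b])\le b-a+n^{-1}$, it is supported on trajectories $u(t,x)\,dx$ with $0\le u\le1$. To close the equation one invokes the replacement (one-block and two-blocks) lemma: using the Dirichlet form bound above together with the sharp spectral gap of the dynamics restricted to boxes and the equivalence of ensembles (first-order forms of Propositions~\ref{p7} and~\ref{p9}), one replaces $\tau_x h(\eta_s^n)$ by $\int h\,d\nu_{\eta_s^{\varepsilon n}(x)}=\varphi(\eta_s^{\varepsilon n}(x))$ and $\tau_x g(\eta_s^n)$ by $\int g\,d\nu_{\eta_s^{\varepsilon n}(x)}=2\beta(\eta_s^{\varepsilon n}(x))$, where $\eta_s^{\ell}(x)$ is the empirical density in the box of size $\ell$ around $x$, at the cost of an error vanishing as $n\to\infty$ and then $\varepsilon\to0$. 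Because $\varphi$ and $\beta$ are polynomials in $\rho$, passing to the limit along the subsequence and then letting $\varepsilon\to0$ turns the drift into $\int_0^t\big[\tfrac12\langle\varphi(u_s),\Delta G\rangle+a\langle\beta(u_s),\nabla G\rangle\big]\,ds$; since $M^{n,G}\to0$, $\pi_\cdot$ is concentrated on weak solutions of~\eqref{echid}.

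It remains to prove uniqueness of such weak solutions. From the Dirichlet form bound one extracts the energy estimate $\varphi(u)\in L^2([0,T],H^1(\bb R))$, and since ellipticity forces $\varphi'>0$, equation~\eqref{echid} is uniformly parabolic; its bounded weak solution with $u-\rho\in L^\infty([0,T],L^1(\bb R))$ is then unique, by the classical $L^1$-contraction / doubling of variables argument. Hence all limit points coincide and tightness upgrades to convergence in probability, proving the proposition. The main obstacle is the replacement lemma --- this is the only place where the sharp spectral gap estimate and the equivalence of ensembles are genuinely used --- together with the control of the antisymmetric part of the generator in the a priori entropy bound, which is possible precisely because the gradient condition preserves the invariance of $\nu_\rho$; the remaining arguments are routine for gradient systems.
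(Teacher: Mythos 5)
Your outline is essentially correct and follows exactly the route of the references the paper cites: Proposition~\ref{p1} is not proved in this article but quoted from \cite{GPV,KOV} (with \cite{FUY,Ber} for refinements), and those works proceed precisely by the Dynkin martingale decomposition, the gradient rewriting of the current, the entropy/Dirichlet-form a priori bounds, the one- and two-blocks replacement lemma, and uniqueness of the uniformly parabolic limit equation, as you describe. The only caveats are cosmetic: the hard steps (replacement lemma, $L^1$-uniqueness) are asserted rather than proved, and your Dirichlet-form bound double-counts the factor $n^2$ already built into $L_n$, but neither affects the validity of the argument as a sketch of the classical proof.
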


The Einstein relation, which holds in great generality, states that $\beta(\rho) = \chi(\rho) \varphi'(\rho)$. The quantity $\chi(\rho) = \chi(\rho)$ is known in the literature as the {\em conductivity} or {\em susceptibility} of the system. As we said before, the gradient condition iv) is very restrictive. From the point of view of modeling, this condition is not so restrictive. In fact, for any $m \in \bb N$ there exists a choice for $c(\eta)$ such that $\varphi(\rho) = \rho^m$ (see \cite{GLT}). By linearity, for any polynomial $q(\rho)$ there are a constant $K$ and rate $c(\eta)$ such that $\varphi(\rho)=q(\rho)+K$.

The symmetric case $a=0$ in Proposition \ref{p1} was treated in \cite{GPV} for a model with interactions of Ginzburg-Landau type, and the weakly asymmetric case was treated in \cite{KOV}. In both cases, the gradient condition iv) is fundamental. The non-gradient method developed in \cite{Qua,Var} allows to generalize this theorem (in the case $a=0$) to the case on which iv) is not satisfied. In that case the function $\varphi(\rho)$ is given in terms of a variational formula, and in particular $\varphi(\rho)$ is not explicit. Under the additional hypothesis $\varphi(\cdot) \in \mc C^1((0,1))$, \cite{FUY} proved Proposition~\ref{p1} when $a=0$ for the very same model considered here. Later, in \cite{Ber} it is proved that $\varphi(\cdot) \in \mc C^\infty((0,1))$, closing the proof of Proposition~\ref{p1} for this model.

In this article we are interested in a central limit theorem for the density of particles, with respect to the hydrodynamic limit considered above. The current state of the art restrict ourselves to the equilibrium situation, that is, when the initial distribution of $\eta_t^n$ is equal to $\nu_\rho$ for some $\rho \in (0,1)$ (the cases $\rho =0,1$ being trivial). Let us fix now and for the rest of the paper a density $\rho \in (0,1)$ and let $\bb P_n$ be the distribution of $\eta_t^n$ with initial condition $\nu_\rho$. We denote by $\bb E_n$ the expectation with respect to $\bb P_n$. Let $\mc S(\bb R)$ be the Schwartz space of test functions and let $\mc S'(\bb R)$ be the space of tempered distributions in $\bb R$, which corresponds to the topological dual of $\mc S(\bb R)$ (we give more precise definitions in Sect.~\ref{s1.3}). The fluctuation field $\{ \mc Y_t^{n,0}; t \geq 0\}$ is defined as the $\mc S'(\bb R)$-valued process given by
\[
\mc Y_t^{n,0}(G) = \frac{1}{\sqrt n} \sum_{x \in \bb Z} \big(\eta_t^n(x) -\rho \big) G(x/n)
\]
for any function $G \in \mc S(\bb R)$.
Calculating  the characteristic function of the random variable $\mc Y_t^{n,0}(G)$, it is easy to see that for any fixed time $t \geq 0$, the process  $\mc Y_t^{n,0}$ converges in distribution to a spatial white noise of variance $\chi(\rho)$. For the sequence of {\em processes} $\{\mc Y_t^{n,0};n \in \bb N\}$ the scaling limit is the following:

\begin{proposition}[\cite{D-MPSW,Cha, Fun}]
\label{p2}
Let us take $p_n-q_n=a/n$ with $a \in \bb R$ and $n$ big enough.
The sequence $\{\mc Y_t^{n,0};n \in \bb N\}$ converges in distribution with respect to the $J_1$-Skorohod topology in $\mc D([0,\infty), \mc S'(\bb R))$ to the process $\mc Y_t^0$, solution of the Ornstein-Uhlenbeck equation
\begin{equation}
\label{ec1}
d \mc Y_t^0 = \frac{1}{2} \varphi'(\rho) \Delta \mc Y_t^0 dt - a\beta'(\rho) \nabla \mc Y_t^0 dt
	+\sqrt{\chi(\rho)\varphi'(\rho)} \nabla d \mc W_t,
\end{equation}
where $\mc W_t$ is a space-time white noise of unit variance.
\end{proposition}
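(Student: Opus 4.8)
The plan is to follow the classical martingale (Holley--Stroock) route to equilibrium fluctuations. Since here $p_n-q_n=a/n$ rather than $a/\sqrt n$, the nonlinear local functions produced by the generator will only need to be replaced by their \emph{first-order} linearizations, so the usual Boltzmann--Gibbs principle suffices and no second-order correction enters. I would fix $G\in\mc S(\bb R)$ and start from Dynkin's identity
\[
\mc Y_t^{n,0}(G)=\mc Y_0^{n,0}(G)+\int_0^t L_n\mc Y_s^{n,0}(G)\,ds+M_t^n(G),
\]
where $M_t^n(G)$ is a martingale with quadratic variation $\int_0^t\big(L_n(\mc Y_s^{n,0}(G))^2-2\mc Y_s^{n,0}(G)L_n\mc Y_s^{n,0}(G)\big)\,ds$. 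Writing $p_n=\tfrac12+\tfrac{a}{2n}$, $q_n=\tfrac12-\tfrac{a}{2n}$ and using the gradient condition iv), the instantaneous current across the bond $\{x,x+1\}$ equals $-\tfrac12(\tau_{x+1}h-\tau_x h)+\tfrac{a}{2n}\,\tau_x g$ with $g(\eta)=c(\eta)(\eta(0)-\eta(1))^2$; two summations by parts then give
\[
L_n\mc Y_s^{n,0}(G)=\frac1{2\sqrt n}\sum_{x}\tau_x h(\eta_s^n)\,\Delta_n G(x/n)+\frac{a}{2\sqrt n}\sum_{x}\tau_x g(\eta_s^n)\,\nabla_n G(x/n),
\]
where $\Delta_n G(x/n)=n^2[G((x+1)/n)-2G(x/n)+G((x-1)/n)]$ and $\nabla_n G(x/n)=n[G((x+1)/n)-G(x/n)]$ converge uniformly to $\Delta G$ and $\nabla G$, and a carr\'e du champ computation yields $\langle M^n(G)\rangle_t=\int_0^t\tfrac1{2n}\sum_x\tau_x g(\eta_s^n)\,(\nabla_n G(x/n))^2\,ds+o(1)$.

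The only nontrivial input is the first-order Boltzmann--Gibbs principle --- introduced in \cite{BR} and established in this context in \cite{D-MPSW}, resting on the sharp spectral gap of the symmetric dynamics on finite boxes together with the comparison \eqref{ec0.1} --- which I would use to replace, for any local $V$ with $\widehat V(\rho)=\int V\,d\nu_\rho$, in $L^2(\bb P_n)$ and uniformly in $s\le t$,
\[
\frac1{\sqrt n}\sum_x\tau_x V(\eta_s^n)\,\psi(x/n)\quad\text{by}\quad\widehat V(\rho)\,\frac1{\sqrt n}\sum_x\psi(x/n)+\widehat V'(\rho)\,\mc Y_s^{n,0}(\psi).
\]
Taking $(V,\psi)=(h,\Delta_n G)$ and $(V,\psi)=(g,\nabla_n G)$, the constant terms vanish because $\tfrac1n\sum_x\Delta_n G(x/n)\to\int\Delta G=0$ and $\tfrac1n\sum_x\nabla_n G(x/n)\to\int\nabla G=0$ with error $o(1/\sqrt n)$ by the rapid decay of $G$. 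Using $\varphi(\rho)=\int h\,d\nu_\rho$, the identity $\tfrac12\int g\,d\nu_\rho=\beta(\rho)$ and the Einstein relation $\beta=\chi\varphi'$, the drift $L_n\mc Y_s^{n,0}(G)$ becomes, up to an $L^2$-error tending to $0$, equal to $\tfrac12\varphi'(\rho)\mc Y_s^{n,0}(\Delta_n G)+a\beta'(\rho)\mc Y_s^{n,0}(\nabla_n G)$, and by stationarity plus a vanishing-variance estimate for $\tfrac1n\sum_x\tau_x g(\eta_s^n)(\nabla_n G(x/n))^2$ one gets $\langle M^n(G)\rangle_t\to\chi(\rho)\varphi'(\rho)\,\|\nabla G\|_{L^2(\bb R)}^2\,t$.

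Next I would establish tightness of $\{\mc Y_\cdot^{n,0};n\in\bb N\}$ in $\mc D([0,\infty),\mc S'(\bb R))$ via Mitoma's criterion, reducing it to tightness in $\mc D([0,\infty),\bb R)$ of each $\mc Y_\cdot^{n,0}(G)$; this follows from the equilibrium bound $\bb E_n[\mc Y_s^{n,0}(G)^2]=\chi(\rho)\tfrac1n\sum_x G(x/n)^2\le C(G)$ uniform in $s$, from $\bb E_n[(L_n\mc Y_s^{n,0}(G))^2]\le C(G)$ obtained above (so the drift term is Lipschitz in $t$ in $L^2$), and from $\sup_n\bb E_n[\langle M^n(G)\rangle_T]<\infty$ with uniformly vanishing jumps, via Aldous's criterion. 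For any limit point $\mc Y^0$, a characteristic-function computation (as noted before the statement) identifies $\mc Y_0^0$ as a white noise of variance $\chi(\rho)$, and passing to the limit in Dynkin's identity --- using exponential martingales to propagate Gaussianity --- shows that for every $G$ the process $\mc Y_t^0(G)-\mc Y_0^0(G)-\int_0^t\mc Y_s^0\big(\tfrac12\varphi'(\rho)\Delta G+a\beta'(\rho)\nabla G\big)\,ds$ is a continuous martingale with deterministic quadratic variation $\chi(\rho)\varphi'(\rho)\|\nabla G\|_{L^2(\bb R)}^2\,t$. The Holley--Stroock uniqueness theorem for the Ornstein--Uhlenbeck martingale problem then identifies the law of $\mc Y^0$ with the stationary solution of \eqref{ec1}, so the whole sequence converges.

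I expect the main obstacle to be precisely the first-order Boltzmann--Gibbs principle of the second step together with the spectral gap and equivalence-of-ensembles estimates behind it; once those are available, everything else is the routine scheme for probabilistic convergence theorems. In fact this statement is classical, recorded in \cite{D-MPSW,Cha,Fun}, and reappears here only as the linear, $a/n$ shadow of the main theorem, whose proof requires the second-order version of this same principle developed in Section \ref{s3}.
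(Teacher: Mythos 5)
The paper does not prove Proposition \ref{p2}: it is quoted from \cite{D-MPSW,Cha,Fun}, with the remark that the proof rests on the first-order Boltzmann--Gibbs principle of \cite{BR} (stated here as Proposition \ref{p4}). Your sketch is exactly that classical route --- Dynkin decomposition, first-order Boltzmann--Gibbs replacement, Mitoma/Aldous tightness, identification via the Ornstein--Uhlenbeck martingale problem --- and it is the same scheme the paper itself runs in Section \ref{s2} for Theorem \ref{t1}, with the second-order replacement of Theorem \ref{t2} playing the role that your first-order replacement plays here. Your computations (splitting of the instantaneous current into $\tfrac12$ of a gradient plus $\tfrac{a}{2n}\tau_x g$, the exact telescoping of $\sum_x\Delta_x^nG$ and $\sum_x\nabla_x^nG$, the identity $\tfrac12\int g\,d\nu_\rho=\beta(\rho)$ and the Einstein relation) all check out. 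One point of precision: the Boltzmann--Gibbs principle is a statement about time integrals, $\bb E_n\big[\big(\int_0^t\cdots\,ds\big)^2\big]\to0$, not a pointwise-in-$s$ $L^2$ replacement of the fields (whose centered difference has variance of order one); your phrase ``in $L^2(\bb P_n)$ and uniformly in $s\le t$'' must be read as a Kipnis--Varadhan bound on $\sup_{s\le t}$ of the integrated error, which is what is actually needed to pass to the limit in Dynkin's formula.
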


The proof of this result relies on a replacement known as the {\em Botzmann-Gibbs principle}, which  was introduced by H. Rost \cite{BR}. When the gradient condition iv) is satisfied, this theorem was proved by \cite{D-MPSW} for the case $a=0$. Later a more robust proof was given by \cite{Cha}, which can be extended to the case $a \neq 0$. When the gradient condition iv) is not satisfied, a sketch of the proof is given in \cite{Fun}, following the method outlined in \cite{Cha2}.

The drift term in \eqref{ec1} is irrelevant in the following sense. Performing a Galilean coordinate transformation, the drift term in equation \eqref{ec1} can be removed. This is clear if $\beta'(\rho)$ is equal to 0, in which case the equation \eqref{ec1} does not depend on $a$. In other words, the drift term does not give rise to stochastic fluctuations of the density field. In a more precise way, let us define the modified fluctuation field $\mc Y_t^n$ as
\[
\mc Y_t^n(G) = \frac{1}{\sqrt n} \sum_{x \in \bb Z} \big(\eta_t^n(x) -\rho\big) G(x/n - v(\rho)t),  
\] 
where $v(\rho) = a \beta'(\rho)$ is the velocity associated to the system. Starting from Proposition~\ref{p2} we see that the sequence $\{\mc Y_t^n; n\in \bb N\}$ converges to the process $\mc Y_t$, solution of the equation
\begin{equation}
\label{ec2}
d\mc Y_t = \frac{1}{2} \varphi'(\rho) \Delta \mc Y_t dt + \sqrt{\chi(\rho)\varphi'(\rho)} \nabla d \mc W_t,
\end{equation}
which corresponds to equation \eqref{ec1} with $a=0$. 

As we mentioned before, the asymmetry is too weak in order to induce a stochastic fluctuation in the density of particles. According to \cite{BG}, a non trivial fluctuation due to the asymmetry appears when $p_n -q_n = a/\sqrt n$, on which case the limiting process  $\mc Y_t$ has a qualitatively different evolution from the Ornstein-Uhlenbeck process  (solution of \eqref{ec2}). For this choice of the asymmetry, the modified fluctuation field is given by
\[
\mc Y_t^n(G) = \frac{1}{\sqrt n} \sum_{x \in \bb Z} \big(\eta_t^n(x) -\rho\big) G(x/n - v(\rho)tn^{1/2}).  
\]
\begin{proposition}[\cite{BG}]
\label{p3}
Let us take $p_n-q_n=a/\sqrt n$ and $c \equiv 1$. Let $u_0: \bb R \to [0,1]$ be a continuous function and let $\mu^n$ be the measure appearing in Proposition~\ref{p1}. Then the process $\mc Y_t^n$ converges in distribution with respect to $\bb P_{\mu^n}$ to the Cole-Hopf solution of the KPZ equation
\begin{equation}
\label{ec3}
d \mc Y_t = \frac{1}{2} \Delta \mc Y_t  dt + a \nabla \mc Y_t^2 dt +\sqrt{u_t(1-u_t)} \nabla d \mc W_t
\end{equation}
with initial condition $\mc W^{u_0}$, where $u_t$ is the solution of the hydrodynamic equation \eqref{echid} and $\mc W^{u_0}$ is a Gaussian process in $\bb R$ of mean zero and covariance given by $u_0(x)(1-u_0(x))\delta(x,y)$. 
\end{proposition}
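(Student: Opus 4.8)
This result is due to Bertini and Giacomin \cite{BG}; the plan is to recall the structure of their argument, which rests on the \emph{microscopic Cole-Hopf transformation} of G\"artner. Since $c\equiv 1$ the dynamics is the weakly asymmetric simple exclusion process with bias $p_n-q_n=a/\sqrt n$. First I would encode the configuration by a height function: let $h^n_t(x)$ be the step function with increments $h^n_t(x)-h^n_t(x-1)=2\eta^n_t(x)-1$, pinned at the origin by the signed current across the bond $\{0,1\}$ (in the moving frame of the statement, which only removes the hydrodynamic transport so that the relevant quantities stay of order one). Then set
\[
\xi^n_t(x)=\exp\bigl\{-\lambda_n h^n_t(x)+\nu_n t\bigr\},
\]
where $\lambda_n$ is the solution of $p_ne^{-\lambda_n}+q_ne^{\lambda_n}=1$, so that $\lambda_n=2an^{-1/2}+O(n^{-1})$, and $\nu_n$ is the scalar chosen so as to cancel the zeroth-order drift below. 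The algebraic point, verified by a direct computation with $L_n$, is that this exponential change of variable \emph{linearizes} the evolution: $\xi^n_t$ solves a discrete stochastic heat equation
\[
d\xi^n_t(x)=\bigl(\mc A_n\xi^n_t\bigr)(x)\,dt+dM^n_t(x),
\]
where $\mc A_n$ is (at the diffusive time scale built into $L_n$) a discrete Laplacian, and $\{M^n_t(x)\}$ is a martingale whose predictable quadratic variation is $\langle M^n(x)\rangle_t=\int_0^t\xi^n_s(x)^2\,\bigl(\tau_x g_n\bigr)(\eta^n_s)\,ds$ for an explicit nonnegative local function $g_n$.

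Next I would pass to the rescaled field $Z^n_t(y)=\xi^n_t(\lfloor ny\rfloor)$, regarded as a process with values in $C(\bb R)$ equipped with exponential weights (needed because $\xi^n$ may grow at infinity). Three ingredients are then required. \emph{(a) Tightness} of $\{Z^n\}$ in $C([0,T],C(\bb R))$: this follows from the Duhamel representation of the discrete SHE together with moment bounds on $\xi^n$, which in turn come from exponential moment estimates on the exclusion current and height function. \emph{(b) Identification of the limit:} any limit point $Z$ should solve the martingale problem for the continuum stochastic heat equation
\[
dZ_t=\tfrac12\Delta Z_t\,dt+a\sqrt{u_t(1-u_t)}\,Z_t\,d\mc W_t,
\]
that is, $Z^n_t(G)-Z^n_0(G)-\tfrac12\int_0^tZ^n_s(\Delta G)\,ds$ must converge to a martingale whose quadratic variation is $a^2\int_0^t\!\!\int Z_s(y)^2 u_s(y)(1-u_s(y))G(y)^2\,dy\,ds$. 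The heart of the matter is a replacement: in the microscopic quadratic variation one must replace $\bigl(\tau_x g_n\bigr)(\eta^n_s)$ by its local-equilibrium average, which is $a^2$ times $u_s(x/n)\bigl(1-u_s(x/n)\bigr)$ --- the susceptibility $\chi$ of Proposition~\ref{p2} evaluated along the hydrodynamic profile $u_s$ of Proposition~\ref{p1}. This is a Boltzmann--Gibbs-type (one-block/two-block) estimate, here carried out in the non-equilibrium regime, using the $O(\ell^{-2})$ spectral gap of symmetric exclusion on boxes of size $\ell$. \emph{(c) Uniqueness:} the linear multiplicative SHE above has a unique mild solution by the classical theory of Walsh, so the limit point is unique and $Z^n\Rightarrow Z$.

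It remains to treat the initial condition and to transfer the statement back to the density field. Since $\mu^n$ is a product measure with slowly varying parameter $u_0(\cdot/n)$ and $|\lambda_n|$ is of order $n^{-1/2}$, the order-$n^{1/2}$ part of $h^n_0$ is deterministic (it is governed by $u_0$) and is absorbed into the spatial renormalization built into $\xi^n$; what survives is an order-one Gaussian fluctuation, and a characteristic-function computation gives $Z^n_0\Rightarrow Z_0$ with $\log Z_0$ a Gaussian field whose covariance, after the Cole-Hopf map, matches $\mc W^{u_0}$. Finally, writing the centred density field as a discrete gradient of $h^n_t=-\lambda_n^{-1}\bigl(\log\xi^n_t-\nu_n t\bigr)$ and passing to the limit identifies $\mc Y_t=\nabla h_t$ with $Z_t=\exp\{2ah_t\}$ solving the SHE above --- which is exactly the assertion that $\mc Y_t$ is the Cole-Hopf solution of \eqref{ec3}. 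The principal obstacle is step (b): establishing the Boltzmann--Gibbs replacement for the quadratic variation \emph{along a genuine hydrodynamic profile} rather than in equilibrium forces one to combine the spectral-gap bound with a local-equilibrium argument, and it is precisely this delicate point that the present paper sidesteps --- in the stationary case and with no microscopic Cole-Hopf transformation at all --- via its second-order Boltzmann--Gibbs principle.
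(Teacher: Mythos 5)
This proposition is quoted from Bertini and Giacomin \cite{BG}; the paper offers no proof of its own, remarking only that the argument rests on G\"artner's microscopic Cole--Hopf transformation, which is precisely the route you take. Your sketch (height function, linearizing exponential change of variables with $\lambda_n=2an^{-1/2}+O(n^{-1})$, discrete stochastic heat equation, tightness, martingale-problem identification of the bracket, uniqueness of the mild solution, and transfer back to the density field) is a faithful reconstruction of the cited argument and is consistent with the paper's treatment.
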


From now on and up to the end of the paper, we take $p_n-q_n = a/\sqrt n$ and for ease of notation we assume that $v(\rho)=0$.
In Section \ref{s1.3} we will explain better what do we call a Cole-Hopf solution of equation \eqref{ec3}. The KPZ equation was introduced in \cite{KPZ} as a continuum model for surface growth. The proof of this theorem makes explicit use of the fact that the simple exclusion process is a totally integrable system. In a more precise way, in \cite{BG} the authors exploit a non-linear transformation of the process, discovered by Gartner \cite{Gar} which linearizes the evolution of $\mc Y_t^n$. This transformation is a microscopic analogous of the Cole-Hopf transformation, and it is of help only in the case $c \equiv 1$.

The main result of this article is a generalization of Proposition~\ref{p3} in the equilibrium case. But in order to write the result in a precise way, we need to introduce various definitions.

\subsection{KPZ equation}

\label{s1.3}

The KPZ equation, which was introduced by Kardar, Parisi and Zhang in the celebrated paper \cite{KPZ} formally reads
\begin{equation}
\label{ec.KPZ0}
d h_t = \frac{ \varphi'(\rho)}{2} \Delta h_t dt - \frac{a \beta''(\rho)}{2} \big(\nabla h_t\big)^2 dt + \sqrt{\chi(\rho)\varphi'(\rho)} d \mc W_t,
\end{equation}
where $h_t$ is a stochastic process with values on the set $\mc C(\bb R)$ of continuous functions in $\bb R$ and $\mc W_t$ is a space-time white noise, that is, a Gaussian process of mean zero and covariance $\delta(x-x') \delta(t-t')$. Formally defining $\mc Y_t = \nabla h_t$ we obtain the conservative KPZ equation:
\begin{equation}
\label{ec.KPZ}
dÊ\mc Y_t = \frac{\varphi'(\rho)}{2} \Delta \mc Y_t dt - \frac{a \beta''(\rho)}{2} \nabla \mc Y_t^2 dt + \sqrt{\chi(\rho)\varphi'(\rho)} \nabla d \mc W_t.
\end{equation}
In the literature this equation is sometimes called stochastic Burgers equation. The stochastic Burgers equation has been studied in detail for the case on which the noise has the strength enough to regularize the solutions of the equation. Therefore, we prefer to reserve the term stochastic Burgers equations for the well studied cases on which the noise regularizes the equation. 
It turns out that, at least formally, the spatial white noise of variance $\chi(\rho)$ is invariant under the evolution of \eqref{ec.KPZ}. 
Let $\mc C_c^\infty(\bb R)$ be the space of infinitely differentiable functions $f: \bb R \to \bb R$ of compact support. For each $l,m \in \bb N$ we define in $\mc C^\infty_c(\bb R)$ the norms
\[
\|f\|_{(l,m)} = \sup_{x \in \bb R} \big|x^l f^{(m)}(x)\big|,
\]
where $f^{(m)}$ is the $m$-th derivative of $f$. Now we define the distance $d_{\mc S}: \mc C^\infty_c(\bb R) \times \mc C^\infty_c(\bb R) \to \bb R$ as
\[
d_{\mc S}(f,g) = \sum_{l,m \in \bb N} \frac{1}{2^{l+m}} \min\{\|f-g\|_{(l,m)},1\}.
\]
for any $f, g \in \mc C_c^\infty(\bb R)$. The Schwartz space $\mc S(\bb R)$ is defined as the closure of $\mc C^\infty_c(\bb R)$ with respect to the distance $d_{\mc S}(\cdot,\cdot)$. The space $\mc S(\bb R)$ coincides with the set of infinitely differentiable functions $f: \bb R \to \bb R$ such that $\|f\|_{(l,m)}<+\infty$ for any $l,m \in \bb N$. The space of {\em tempered distributions} $\mc S'(\bb R)$ is defined as the topological dual of $\mc S(\bb R)$. For $T \in (0,\infty)$, let $\mc C([0,T], \mc S'(\bb R))$ be the space of continuous functions from $[0,T]$ to $\mc S'(\bb R)$. 

The space $\mc C([0,T],\mc S'(\bb R))$ is the space on which the solutions of the KPZ equation \eqref{ec.KPZ} will live. For $\epsilon >0$ we define $i_\epsilon(x):\bb R \to \bb R$ by 
\[
i_\epsilon(x)(y) = \epsilon^{-1}\mathbf{1}(x < y \leq x+\epsilon).
\]
We say that a process $\{\mc Y_t; t\in [0,T]\}$ with trajectories in $\mc C([0,T],\mc S'(\bb R))$ and adapted to some natural filtration $\{\mc F_t; t \in [0,T]\}$ is a {\em weak solution} of the KPZ equation \eqref{ec.KPZ} if:
\begin{itemize}
\item[i)]
There exists a process $\{\mc A_t; t \in [0,T]\}$ with trajectories in $\mc C([0,T], \mc S'(\bb R))$ and adapted to $\{\mc F_t; t\in [0,T]\}$ such that for any $G \in \mc S(\bb R)$,
\begin{equation}
\label{ec16}
\lim_{\epsilon \to 0} \int_0^t \int_{\bb R} \mc Y_s(i_\epsilon(x))^2 \frac{G(x+\epsilon)-G(x)}{\epsilon} dx ds = \mc A_t(G).
\end{equation}
\item[ii)]
For any function $G \in \mc S(\bb R)$ the process
\begin{equation}
\label{ec17}
M_t(G) = \mc Y_t(G) - \mc Y_0(G) -\frac{\varphi'(\rho)}{2} \int_0^t \mc Y_s(G'') ds - \frac{a \beta''(\rho)}{2} \mc A_t(G)
\end{equation}
is a martingale of quadratic variation $\chi(\rho) \varphi'(\rho) t \|G\|_1^2$.
\end{itemize}

Here and below we write $\|G\|_1^2 = \int G'(x)^2dx$. This notion of solution of the KPZ equation is new. Moreover, it seems to us that any effort to define a meaningful notion of solution of \eqref{ec.KPZ} has not been well succeeded. Now we will introduce a stronger notion of solution, which captures well some of the particularities of the solutions of \eqref{ec.KPZ}. Let $\{\mc Y_t; t \in [0,T]\}$ be a weak solution of \eqref{ec.KPZ}. For $0\leq s<t\leq T$, let us define the fields
\[
\mc I_{s,t}(G) = \int_s^t \mc Y_u(G'')du,
\]
\[
\mc A_{s,t}(G) = \mc A_t(G) - \mc A_s(G),
\]
\[
\mc A_{s,t}^\epsilon(G) = \int_s^t \int_{\bb R} \mc Y_u(i_\epsilon(x))^2 \frac{G(x+\epsilon)-G(x)}{\epsilon} dx du.
\]

We say that $\{\mc Y_t;  \in [0,T]\}$ is an {\em energy solution} of the KPZ equation \eqref{ec.KPZ} if there exists a constant $\kappa >0$ such that
\[
E[\mc I_{s,t}(G)^2] \leq \kappa (t-s) \|G\|_1^2
\]
and 
\[
E[(\mc A_{s,t}(G) - \mc A_{s,t}^\epsilon(G))^2] \leq \kappa \epsilon (t-s) \|G\|_1^2
\]
for any $0\leq s<t\leq T$, any $\epsilon \in (0,1)$ and any $G \in \mc S(\bb R)$. We say that a weak solution $\{\mc Y_t ; t \in [0,T]\}$ is a stationary solution if for any $t \in [0,T]$ the $\mc S'(\bb R)$-valued random variable $\mc Y_t$ is a white noise of variance $\chi(\rho)$. Now we are ready to state the main result of this article.
\begin{theorem}[Equilibrium fluctuations]
\label{t1}
The sequence of proceses $\{\{\mc Y_t^n; t \in [0,T]\}; n \in \bb N\}$ is tight in $\mc D([0,T],\mc S'(\bb R))$. Moreover, any limit point of $\mc Y_t^n$ is a stationary energy solution of \eqref{ec.KPZ}. 
\end{theorem}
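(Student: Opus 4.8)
The plan is to run the classical martingale scheme for equilibrium fluctuations, with the two Boltzmann--Gibbs principles supplying all the analytic input. Fix $G\in\mc S(\bb R)$. Starting from Dynkin's formula
\[
\mc Y_t^n(G)=\mc Y_0^n(G)+\int_0^t n^2 L_n\mc Y_s^n(G)\,ds+M_t^n(G),
\]
where $M_t^n(G)$ is a martingale with explicit predictable quadratic variation $\langle M^n(G)\rangle_t=\int_0^t\Gamma_s^n(G)\,ds$, I would split the bond current into its symmetric part $\tfrac12 c_x(\eta)\bigl(\eta(x)-\eta(x+1)\bigr)$ and its antisymmetric part $\tfrac{a}{2\sqrt n}\tau_x V(\eta)$, with $V(\eta)=c(\eta)\bigl(\eta(0)+\eta(1)-2\eta(0)\eta(1)\bigr)$. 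Using the gradient condition iv) to write $c_x(\eta)(\eta(x)-\eta(x+1))$ as a discrete gradient of $\tau_x h$ and summing by parts twice, this rewrites $n^2 L_n\mc Y_s^n(G)$ as $\mc S_s^n(G)+\mc B_s^n(G)$ with
\[
\mc S_s^n(G)\approx\tfrac12\tfrac1{\sqrt n}\sum_x\bigl(\tau_x h(\eta_s)-\varphi(\rho)\bigr)G''(x/n),\qquad
\mc B_s^n(G)\approx\tfrac a2\sum_x\bigl(\tau_x V(\eta_s)-2\beta(\rho)\bigr)G'(x/n),
\]
the recentring constants being harmless because $\int G'=\int G''=0$.

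Next I would dispose of $\mc S_s^n(G)$ with the first--order Boltzmann--Gibbs principle: together with the sharp spectral gap bound (Proposition~\ref{p7}) and the Kipnis--Varadhan estimate from Section~\ref{s3.1}, it replaces $\tau_x h-\varphi(\rho)$ by $\varphi'(\rho)(\eta(x)-\rho)$ in $L^2(\bb P_n)$, giving $\int_0^t\mc S_s^n(G)\,ds\to\tfrac{\varphi'(\rho)}2\int_0^t\mc Y_s(G'')\,ds$. For $\mc B_s^n(G)$, the leading, first--order contribution of $\sum_x(\tau_x V-2\beta(\rho))G'(x/n)$ to the density field is $2\beta'(\rho)\sqrt n\,\mc Y_s^n(G')$, which vanishes by the standing assumption $v(\rho)=a\beta'(\rho)=0$; this cancellation is exactly what removes the otherwise diverging $O(\sqrt n)$ term and is the reason the $a/\sqrt n$ scaling is critical. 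What is left is a doubly centred functional to which the \emph{second--order Boltzmann--Gibbs principle} of Section~\ref{s3} applies, yielding a constant $\kappa$ and the key bound
\[
\bb E_n\Bigl[\Bigl(\int_s^t\mc B_u^n(G)\,du-\tfrac{a\beta''(\rho)}2\,\mc A_{s,t}^{n,\epsilon}(G)\Bigr)^{2}\Bigr]\le\kappa\,\epsilon\,(t-s)\,\|G\|_1^2,\qquad
\mc A_{s,t}^{n,\epsilon}(G):=\int_s^t\!\!\int_{\bb R}\mc Y_u^n(i_\epsilon(x))^2\,\tfrac{G(x+\epsilon)-G(x)}{\epsilon}\,dx\,du .
\]
Proving this estimate is the step I expect to be the main obstacle: it is precisely the multiscale ``seed plus iterative bound'' argument of Section~\ref{s3}, which consumes both the \emph{sharp} (not merely order one) spectral gap estimate and the second--order equivalence of ensembles, Propositions~\ref{p7} and~\ref{p9}. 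Everything else in the proof is comparatively soft.

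Granting these replacements, tightness follows from Mitoma's criterion, so it suffices to make $\{\mc Y_t^n(G)\}$ tight in $\mc D([0,T],\bb R)$ for each $G$. I would argue that $\mc Y_0^n(G)$ converges to a centred Gaussian by a characteristic-function computation; that $M_t^n(G)$ has jumps of size $O(n^{-1/2})$ and satisfies $\bb E_n[\langle M^n(G)\rangle_t]=O(t\|G\|_1^2)$, hence is $C$-tight; and that $\int_0^t\mc S_s^n(G)\,ds$ and $\int_0^t\mc B_s^n(G)\,ds$ are $C$-tight by Kolmogorov's criterion, using the Kipnis--Varadhan bound and the displayed second--order estimate, which both give $\bb E_n[(\int_s^t\cdot\,du)^2]\lesssim(t-s)\|G\|_1^2$. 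Uniform compact containment in a negative Sobolev space comes from $\sup_t\bb E_n[\mc Y_t^n(G)^2]\le C\int G^2$, valid because $\mc Y_t^n$ has law $\nu_\rho$ at every time.

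Finally I would identify the limit points. Let $\mc Y$ be one, along a subsequence; it has continuous trajectories since the jumps vanish. Passing to the limit in the second--order estimate (for fixed $\epsilon>0$ the map $\mc Y^n\mapsto\mc A_{s,t}^{n,\epsilon}(G)$ is continuous after smoothing $i_\epsilon$) gives $E[(\mc A_{s,t}^\epsilon(G)-\mc A_{s,t}^{\epsilon'}(G))^2]\le\kappa(\epsilon\vee\epsilon')(t-s)\|G\|_1^2$, so $\mc A_{s,t}^\epsilon(G)$ is Cauchy in $L^2$ as $\epsilon\to0$; its limit defines an adapted, continuous process $\mc A_t(G)$ satisfying \eqref{ec16}, and the same inequality in the limit is the second energy bound. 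Combining the decomposition with the two Boltzmann--Gibbs limits shows that
\[
M_t(G)=\mc Y_t(G)-\mc Y_0(G)-\tfrac{\varphi'(\rho)}2\int_0^t\mc Y_s(G'')\,ds-\tfrac{a\beta''(\rho)}2\mc A_t(G)
\]
is the limit in law of $M_t^n(G)$; uniform $L^2$ bounds make $M(G)$ a martingale, and passing to the limit in $M_t^n(G)^2-\langle M^n(G)\rangle_t$ — with a variance estimate under $\nu_\rho$ to replace $\Gamma_s^n(G)$ by its mean $\chi(\rho)\varphi'(\rho)\|G\|_1^2$ — identifies its quadratic variation as $\chi(\rho)\varphi'(\rho)t\|G\|_1^2$. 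The first energy bound $E[\mc I_{s,t}(G)^2]\le\kappa(t-s)\|G\|_1^2$ is the Kipnis--Varadhan estimate applied to $\int_s^t\mc Y_u^n(G'')\,du$, whose relevant $H_{-1}$-norm of $G''$ is comparable to $\|G\|_1^2=\int G'(x)^2dx$. Stationarity is immediate, since $\mc Y_t^n$ has law independent of $t$ and each marginal converges to white noise of variance $\chi(\rho)$. Hence every limit point is a stationary energy solution of \eqref{ec.KPZ}.
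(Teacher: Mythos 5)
Your overall architecture is the same as the paper's: the Dynkin decomposition with the gradient condition isolating the symmetric part, the first-order Boltzmann--Gibbs principle for the $\tau_x h$ term, the second-order principle for the doubly centred antisymmetric term, Mitoma plus one-dimensional tightness, and uniform integrability to transfer the martingale structure to the limit. Two steps, however, do not close as you have written them. First, tightness of the nonlinear term: you invoke Kolmogorov's criterion with the bound $\bb E_n[(\int_s^t\cdot\,du)^2]\lesssim(t-s)\|G\|_1^2$, but Kolmogorov (Proposition~\ref{p13}) requires a modulus $|t-s|^{1+\beta}$ with $\beta>0$, so an exponent of exactly $1$ proves nothing. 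Moreover the displayed second-order estimate alone does not even give that exponent: it controls the difference from $\mc A^{n,\epsilon}_{s,t}(G)$ by $\kappa\,\epsilon\,(t-s)\|G\|_1^2$, and one still needs the static Cauchy--Schwarz/stationarity bound $\bb E_n[(\mc A^{n,\epsilon}_{s,t}(G))^2]\lesssim (t-s)^2/\epsilon$ (the analogue of \eqref{ec14}); only after optimizing $\epsilon=|t-s|^{1/2}$ do you obtain the usable modulus $\bb E_n[(\mc A_t^n(G)-\mc A_s^n(G))^2]\lesssim |t-s|^{3/2}$, which is exactly \eqref{ec13} and is also what produces the H\"older-$1/4$ statement of Corollary~\ref{cor1}. (For the symmetric integral term the cheap route is the static bound on the integrand, giving $(t-s)^2$, rather than Kipnis--Varadhan.)

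Second, the identification of the quadratic variation. Uniform $L^2$ bounds on $M_t^n(G)$ do make the limit $M_t(G)$ a martingale, but to pass to the limit in $M_t^n(G)^2-\langle M^n(G)\rangle_t$ you need uniform integrability of $M_t^n(G)^2$, i.e.\ control of a moment of $M_t^n(G)$ strictly above $2$. Since $M_t^n(G)$ contains $\mc A_t^n(G)$, and Kipnis--Varadhan yields nothing beyond second moments, this does not follow from anything you have stated. The paper closes this by splitting $\mc A_t^n(G)=\{\mc A_t^n(G)-\mc A_t^{n,\epsilon n}(G)\}+\mc A_t^{n,\epsilon n}(G)$, using \eqref{ec8} for the $L^2$-smallness of the first piece and Proposition~\ref{p9} with $p=2$ (via H\"older) to get $\bb E_n[\mc A_t^{n,\epsilon n}(G)^4]\lesssim t^4/\epsilon^2$ for the second, and then tuning $\epsilon$ against the truncation level. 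You need either this argument or some substitute (e.g.\ a direct fourth-moment bound on $M_t^n(G)$ through its jump structure, or a martingale functional CLT exploiting that $\langle M^n(G)\rangle_t$ converges to a deterministic limit under a deterministic bound); as written, this step is a genuine gap.
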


An immediate consequence of this result is the existence of weak solutions of the KPZ equation. Let $\mc Y_t$ be a limit point of $\mc Y_t^n$. Since the measure $\nu_\rho$ is invariant under the evolution of $\eta_t^n$, for any fixed time $t \in [0,T]$ the $\mc S'(\bb R)$-valued random variable $\mc Y_t$ is  a white noise of variance $\chi(\rho)$. The following corollary states some properties of the sample paths of the process $\mc Y_t$.

\begin{corollary}
\label{cor1}
For any limit point $\{\mc Y_t; t \in [0,T]\}$ of $\{\{\mc Y_t^n; t \in [0,T]\}; n \in \bb N\}$, there is a finite constant $c>0$ such that the process $\{\mc A_t; t \in [0,T]\}$ defined as above satisfies the moment bound
\[
\bb E[\mc A_{s,t}(G)^2] \leq c  |t-s|^{3/2} \|G\|_1^2.
\]

Moreover, for any $\gamma \in (0,1/4)$ and any $G \in \mc S(\bb R)$ the real-valued process $\{\mc Y_t(G); t \in [0,T]\}$ is H\"older-continuous of order $\gamma$.
\end{corollary}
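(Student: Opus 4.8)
The plan is to deduce both assertions from Theorem~\ref{t1}: the moment bound on $\mc A_t$ is the one substantial point, and the H\"older regularity then follows by soft arguments.

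\emph{The moment bound on $\mc A_{s,t}(G)$.} Fix a limit point $\{\mc Y_t\}$; by Theorem~\ref{t1} it is a stationary energy solution, with constant $\kappa$ and processes $\mc A_t$, $M_t$ as in Section~\ref{s1.3}. I would compare $\mc A_{s,t}(G)$ with its regularization $\mc A_{s,t}^\epsilon(G)$ and estimate the latter directly, using that at each fixed time $\mc Y_u$ is a centred Gaussian white noise of variance $\chi(\rho)$. Setting
\[
\mc V_u^\epsilon(G) = \int_{\bb R} \mc Y_u(i_\epsilon(x))^2\, \frac{G(x+\epsilon)-G(x)}{\epsilon}\,dx,
\]
so that $\mc A_{s,t}^\epsilon(G) = \int_s^t \mc V_u^\epsilon(G)\,du$, Cauchy--Schwarz in time together with stationarity of the one-time marginals gives $\bb E[\mc A_{s,t}^\epsilon(G)^2]\le (t-s)^2\,\bb E[\mc V_0^\epsilon(G)^2]$. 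I would then expand $\bb E[\mc V_0^\epsilon(G)^2]$, use that $(\mc Y_0(i_\epsilon(x)),\mc Y_0(i_\epsilon(x')))$ is jointly Gaussian, and apply Wick's formula, which splits $\bb E[\mc Y_0(i_\epsilon(x))^2\mc Y_0(i_\epsilon(x'))^2]$ into a product term and $2\,\bb E[\mc Y_0(i_\epsilon(x))\mc Y_0(i_\epsilon(x'))]^2$. Since $G$ is Schwartz, $\int_{\bb R}\frac{G(x+\epsilon)-G(x)}{\epsilon}\,dx = 0$, so the product term integrates to zero; the remaining term is controlled via $\bb E[\mc Y_0(i_\epsilon(x))\mc Y_0(i_\epsilon(x'))] = \chi(\rho)\langle i_\epsilon(x),i_\epsilon(x')\rangle$ (supported on $|x-x'|<\epsilon$ and bounded by $\chi(\rho)/\epsilon$) and the elementary bound $\int_{\bb R}\big(\frac{G(x+\epsilon)-G(x)}{\epsilon}\big)^2 dx\le\|G\|_1^2$, yielding $\bb E[\mc V_0^\epsilon(G)^2]\le C\epsilon^{-1}\|G\|_1^2$, hence $\bb E[\mc A_{s,t}^\epsilon(G)^2]\le C(t-s)^2\epsilon^{-1}\|G\|_1^2$. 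Combining this with the energy-solution estimate $\bb E[(\mc A_{s,t}(G)-\mc A_{s,t}^\epsilon(G))^2]\le\kappa\epsilon(t-s)\|G\|_1^2$ and optimizing over $\epsilon\in(0,1)$ --- choosing $\epsilon\asymp(t-s)^{1/2}$ when $t-s\le 1$, and for $t-s\ge1$ splitting $[s,t]$ into $O(T)$ unit subintervals and using Cauchy--Schwarz --- produces $\bb E[\mc A_{s,t}(G)^2]\le c\,|t-s|^{3/2}\|G\|_1^2$.

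\emph{H\"older continuity of $\mc Y_t(G)$.} From the martingale decomposition \eqref{ec17} I would write, for $0\le s<t\le T$,
\[
\mc Y_t(G) - \mc Y_s(G) = \big(M_t(G)-M_s(G)\big) + \tfrac{\varphi'(\rho)}{2}\,\mc I_{s,t}(G) + \tfrac{a\beta''(\rho)}{2}\,\mc A_{s,t}(G),
\]
and treat the three terms separately. The process $t\mapsto M_t(G)$ is a continuous martingale with the deterministic quadratic variation $\chi(\rho)\varphi'(\rho)t\|G\|_1^2$, so by L\'evy's characterization it is a Brownian motion, whose paths are H\"older-$\gamma$ for every $\gamma<1/2$. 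The term $\mc I_{s,t}(G)=\int_s^t\mc Y_u(G'')\,du$ is Lipschitz in $(s,t)$, since $u\mapsto\mc Y_u(G'')$ is a.s.\ continuous hence bounded on $[0,T]$. For $\mc A_\cdot(G)$ the moment bound just obtained, together with the Kolmogorov--Chentsov criterion (applied with $\alpha=2$ and $\beta=1/2$, since $\bb E[\mc A_{s,t}(G)^2]\le c|t-s|^{1+\beta}\|G\|_1^2$, so that $\beta/\alpha = 1/4$), produces a H\"older-$\gamma$ modification for every $\gamma<1/4$, which coincides with $\mc A_t(G)$ because the latter is already continuous. Adding the three contributions gives H\"older regularity of order $\gamma$ for every $\gamma<1/4$.

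\emph{Expected main difficulty.} The crux is the bound $\bb E[\mc V_0^\epsilon(G)^2]\le C\epsilon^{-1}\|G\|_1^2$: one must use the precise Gaussian structure of the invariant field so that the otherwise divergent ``disconnected'' term cancels through $\int\frac{G(\cdot+\epsilon)-G(\cdot)}{\epsilon}=0$, and then carry out the $i_\epsilon$-overlap integrals sharply enough to recover exactly the norm $\|G\|_1^2$ and the power $\epsilon^{-1}$ --- it is this $\epsilon^{-1}$, together with the linear-in-$(t-s)$ term from the energy-solution estimate, that makes the optimization give the exponent $3/2$, and hence $1/4$ in the H\"older bound. The remaining steps --- the optimization in $\epsilon$, the $|t-s|\ge1$ reduction, and the L\'evy and Kolmogorov--Chentsov arguments --- are routine.
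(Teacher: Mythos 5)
Your argument is correct, but for the key moment bound you take a genuinely different route from the paper. The paper proves the estimate $\bb E_n[(\mc A_t^n(G)-\mc A_s^n(G))^2]\leq C(\rho,G)|t-s|^{3/2}$ already at the microscopic level (estimate \eqref{ec13} in Section \ref{s2.1}, obtained by combining \eqref{ec8} and \eqref{ec14} and optimizing $\epsilon=|t-s|^{1/2}$), and then simply passes this bound and the Kolmogorov--Prohorov--Centsov criterion (Proposition \ref{p13}) to the limit. You instead work entirely at the level of the limit process: you combine the defining energy estimate $\bb E[(\mc A_{s,t}(G)-\mc A_{s,t}^\epsilon(G))^2]\leq\kappa\epsilon(t-s)\|G\|_1^2$ with a direct Gaussian computation of $\bb E[\mc A_{s,t}^\epsilon(G)^2]$ using the white-noise marginals and Wick's formula, and then perform the same optimization in $\epsilon$. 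Your Wick cancellation of the disconnected term via $\int(G(x+\epsilon)-G(x))dx=0$ is exactly the continuum counterpart of the paper's remark that the counterterm $\chi(\rho)/\epsilon n$ is unnecessary when $\sum_x H_x=0$, and your bound $\bb E[\mc V_0^\epsilon(G)^2]\leq C\epsilon^{-1}\|G\|_1^2$ mirrors \eqref{ec14}. What your approach buys is that the corollary becomes an intrinsic property of \emph{any} stationary energy solution, with no reference to the approximating particle system; what the paper's approach buys is brevity, since \eqref{ec13} is already in hand from the tightness argument. Your treatment of the H\"older claim (decomposition into martingale, Lipschitz integral term, and $\mc A$, with Kolmogorov--Chentsov applied to the last at exponent $\beta/\alpha=1/4$) is the argument the paper leaves implicit, and it also silently corrects the typo $\gamma<\alpha/\beta$ in the statement of Proposition \ref{p13}.
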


The only rigorous result about existence of solutions of \eqref{ec.KPZ} we know is the work of Bertini and Giacomin \cite{BG}. Let us describe this result in a precise way. Let $\mc C_+(\bb R)$ be the set of positive, continuous functions $f: \bb R \to \bb R$. We say that a process $\{\mc Z_t; t \in [0,T]\}$ with trajectories in $\mc C([0,T]; \mc C_+(\bb R))$ is a {\em mild solution} of the stochastic heat equation
\begin{equation}
\label{ec18}
d \mc Z_t = \frac{\varphi'(\rho)}{2} \Delta \mc Z_t dt + a \beta''(\rho) \sqrt{\frac{\chi(\rho)}{\varphi'(\rho)}} \mc Z_t d \mc W_t
\end{equation}
if  the process $\mc Z_t$ satisfies the integral equation
\[
\mc Z_t = K_t \ast \mc Z_0 - \int_0^t K_{t-s} \ast \mc Z_s d \mc W_s,
\]
where $ K_t(x) = (2\pi \varphi'(\rho)t)^{-1/2} \exp\{-x^2/2\varphi'(\rho)t\}$ is the heat kernel and $\ast$ denotes convolution. We say that a process $\{h_t; t \in [0,T]\}$ is a {\em Cole-Hopf solution} of \eqref{ec.KPZ0} if $h_t = -\varphi'(\rho)/a\beta''(\rho) \log \mc Z_t$ for any $t \in [0,T]$, where $\mc Z_t$ is a mild solution of the stochastic equation \eqref{ec18}. Defining $\mc Y_t = \nabla h_t$ in the distributional sense, we say that $\{\mc Y_t; t \in [0,T]\}$ is a Cole-Hopf solution of the KPZ equation \eqref{ec.KPZ} if $\{h_t; t \in [0,T]\}$ is a Cole-Hopf solution of \eqref{ec.KPZ0}.

An important issue raised in \cite{BG} (see the remark after Theorem~2.1 in that article) is wheter a Cole-Hopf solution of \eqref{ec.KPZ} actually satisfies this equation in any meaningful sense. Since Cole-Hopf solutions of \eqref{ec.KPZ} arise as scaling limits of density fields in the weakly asymmetric exclusion process, Theorem~\ref{t1} combined with the results in \cite{BG} has the following consequence.

\begin{theorem}
\label{t1.1}
The Cole-Hopf solution of \eqref{ec.KPZ} with initial distribution given by a spatial white noise of variance $\chi(\rho)$ is an energy solution of \eqref{ec.KPZ}. 
\end{theorem}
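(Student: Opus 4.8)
The plan is to deduce Theorem~\ref{t1.1} by matching the two descriptions of the scaling limit of the density fluctuation field of the weakly asymmetric simple exclusion process: the one coming from \cite{BG} (Proposition~\ref{p3}) and the one coming from our Theorem~\ref{t1}. Concretely, I would specialize the whole setup to $c \equiv 1$, which trivially satisfies conditions i)--iv), take the constant profile $u_0 \equiv \rho$, and observe that the associated sequence of Bernoulli product measures $\mu^n$ with $\mu^n(\eta(x)=1)=u_0(x/n)$ is exactly the stationary measure $\nu_\rho$. With this choice the fluctuation field of Proposition~\ref{p3} and the field of Theorem~\ref{t1} coincide (recall we have assumed $v(\rho)=0$, so the moving frame is trivial; otherwise one argues along a characteristic line as in Section~\ref{s1.4}).

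Next I would record that for $c\equiv 1$ the thermodynamic quantities are explicit: one may take $h(\eta)=\eta(0)$ in the gradient condition, so that $\varphi(\rho)=\rho$, $\varphi'(\rho)=1$, $\beta(\rho)=\rho(1-\rho)$ and $\beta''(\rho)=-2$. Hence equation \eqref{ec.KPZ} reduces precisely to \eqref{ec3} with $u_t\equiv\rho$, and the Gaussian initial datum $\mc W^{u_0}$ appearing in Proposition~\ref{p3} has covariance $\rho(1-\rho)\delta(x-y)=\chi(\rho)\delta(x-y)$, i.e. it is a spatial white noise of variance $\chi(\rho)$. Thus the two statements refer to the same equation and the same initial condition.

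The deduction itself is then immediate. By Theorem~\ref{t1}, the sequence $\{\mc Y_t^n\}$ is tight in $\mc D([0,T],\mc S'(\bb R))$ and every limit point is a stationary energy solution of \eqref{ec.KPZ}. By Proposition~\ref{p3} (that is, by \cite{BG}), the very same sequence converges in distribution to the Cole-Hopf solution of \eqref{ec.KPZ} with initial condition a white noise of variance $\chi(\rho)$; in particular it has a unique limit point, which is that Cole-Hopf solution. Combining the two facts, the Cole-Hopf solution started from a white noise of variance $\chi(\rho)$ is a stationary energy solution of \eqref{ec.KPZ}, which is exactly the assertion of Theorem~\ref{t1.1}.

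The one point that requires care, and which I would single out as the main obstacle, is the compatibility of the two modes of convergence, so that the limit point produced by our tightness argument in $\mc D([0,T],\mc S'(\bb R))$ is genuinely the process of \cite{BG}. Since the Cole-Hopf solution has almost surely continuous trajectories with values in $\mc S'(\bb R)$, convergence in the $J_1$-Skorohod topology toward it is equivalent to convergence in the uniform topology on $\mc C([0,T],\mc S'(\bb R))$, so the limit points in the two frameworks agree. One should also check that the notion of Cole-Hopf solution used here, namely $h_t=-\varphi'(\rho)/(a\beta''(\rho))\log\mc Z_t$ with $\mc Z_t$ the mild solution of \eqref{ec18}, is exactly the object identified as the limit in Proposition~\ref{p3}; this is built into the statement of that proposition. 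No further estimates are needed, since all of the analytic content is already contained in Theorem~\ref{t1}.
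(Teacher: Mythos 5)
Your proposal is correct and follows essentially the same route as the paper, which disposes of Theorem~\ref{t1.1} in one line at the end of Section~\ref{s2.2}: take $c\equiv 1$, apply Theorem~\ref{t1} to conclude every limit point of $\mc Y_t^n$ is a stationary energy solution, and use Bertini--Giacomin (Proposition~\ref{p3}) to identify the unique limit point with the Cole-Hopf solution. Your additional checks (the coefficient matching $\varphi'(\rho)=1$, $\beta''(\rho)=-2$, the identification of the initial law as white noise of variance $\chi(\rho)$, and the compatibility of the Skorohod and uniform topologies for a continuous limit) are exactly the details the paper leaves implicit.
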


\subsection{Current fluctuations and growing interfaces}
\label{s1.4}
For each $x \in \bb Z$ and any $t \in [0,\infty)$, let $J_t^n(x)$ be the current of particles through sites $x$ and $x+1$ up to time $t$. That is, $J_t^n(x)$ counts the number of particles passing between sites $x$ and $x+1$. The local conservation of  the number of particles plus the fact that the jumps are only to nearest neighbors are equivalent to the continuity equation $\eta_t^n(x)-\eta_0^n(x) = J_t^n(x-1)-J_t^n(x)$. In other words, ``what we have now minus what we had before is equal to what came in minus what went out''. The height function $h^n_t=\{h^n_t(x); x \in \bb Z\}$ is defined by
\begin{equation}
h^n_t(x) =
\begin{cases}
\label{ec.height}
J_t^n(0) - \sum_{y=1}^x \eta_t^n(y), & x >0\\
J_t^n(0),& x=0\\
J_t^n(0) +  \sum_{y=-x+1}^{-1} \eta_t^n(y), & x<0.
\end{cases}
\end{equation}
The continuity equation shows the relation $h_t^n(x) = h_0^n(x) + J_t^n(x)$. Notice that the process $\eta_t^n$ corresponds to the increments of the process $h_t^n$. In fact, $\eta_t^n(x) = h_t^n(x-1)-h_t^n(x)$. We write this relation in the symbolic form $\eta_t^n = -\nabla h^n_t$. We can interpret $h^n_t$ as the evolution of a particle system on which particles are created or annihilated, but on which particles never move. A jump of a particle from site $x$ to site $x+1$ in the exclusion process corresponds to  the creation of a particle at site $x$, and a jump from $x+1$ to $x$ corresponds to the  annihilation of a particle at site $x$. Therefore, in order to have a creation at site $x$, it is necessary to have $h_t^n(x-1)-h_t^n(x)=1$, $h_t^n(x)-h_t^n(x+1)=0$. And in order to have an annihilation at site $x$, it is necessary to have $h_t^n(x-1)-h_t^n(x)=0$, $h_t^n(x)-h_t^n(x+1)=1$. The creation rate, given this necessary condition, is equal to $n^2 p_n \gamma_x(h^n_t)$, where $\gamma_x(h^n_t) = c_x(-\nabla h^n_t)$. And the annihilation rate is equal to $n^2 q_n \gamma_x(h^n_t)$. If $a>0$, we say that $h^n_t$ is a {\em growing interface model}. It was in the context of growing interfaces that the KPZ equation \eqref{ec.KPZ0} (and also \eqref{ec.KPZ}) were introduced in the original work of Kardar, Parisi and Zhang \cite{KPZ}. 

Let us define the rescaled interface field as the process $\theta_t^n = \{\theta_t^n(x); x \in \bb R\}$ by taking $\theta_t^n(x/n) = n^{-1/2}\big(h_t^n(x)-\bb E_n[h_t^n(x)]\big)$ for $x \in \bb Z$ and $\theta_t^n(x) = \theta_t^n(\lfloor nx\rfloor/n)$ for $x \notin n^{-1} \bb Z$. We could also extend $\theta_t^n$ to $\bb R$ by linear interpolation, but in our setting it will prove to be more convenient to define $\theta_t^n$ as we do here. Notice that
\[
\theta_t^n(x) =  n^{-1/2} \big(J_t^n(0) - \bb E_n[J_t^n(0)]\big) - \mc Y_t^n(\mathbf{1}_{(0,x]})
\]
and in particular $\theta_t^n(0) =  n^{-1/2}(J_t^n(0) - \bb E_n[J_t^n(0)])$.
Let $H_0$ be the Heaviside function, that is, $H_0(x) = \mathbf{1}_{(0,\infty)}(x)$.
Our first result corresponds to a limit theorem for the current $J_t^n(0)$. 
\begin{theorem}
\label{t4}Let $\mc Y_t$ be a limit point of $\mc Y_t^n$, and denote by $n'$ a subsequence such that $\mc Y_t^{n'} \to \mc Y_t$. Then the process $\mc J_t(0) = \mc Y_t(H_0)-\mc Y_0(H_0)$ is well defined and 
\[
\lim_{n' \to \infty}\theta_t^{n'}(0) = \mc J_t(0)
\]
in the sense of convergence of finite-dimensional distributions.
\end{theorem}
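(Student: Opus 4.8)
\emph{Proof sketch.} By translation invariance of $\bb P_n$ and the identity $\theta_t^n(x)=\theta_t^n(0)-\mc Y_t^n(\mathbf 1_{(0,x]})$ recorded above, together with the already-known convergence $\mc Y_t^{n'}\to\mc Y_t$, everything reduces to proving convergence of the finite-dimensional distributions of $\{\theta_t^n(0)\}$ and to identifying the limit. Fix, for each $\delta\in(0,1)$, a function $\phi_\delta\in\mc C^\infty(\bb R)$ with $0\le\phi_\delta\le1$, $\phi_\delta\equiv0$ on $(-\infty,0]$ and $\phi_\delta\equiv1$ on $[\delta,\infty)$ (a smooth step of width $\delta$). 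The argument rests on the exact identity, obtained from $\eta_t^n(x)-\eta_0^n(x)=J_t^n(x-1)-J_t^n(x)$ and a summation by parts — legitimate since $J_t^n(x)=0$ for $|x|$ large, $\bb P_n$-a.s., and since $H_0=\phi_\delta+H_0(1-\phi_\delta)$:
\[
\theta_t^n(0)\;=\;\mc Q_t^n(\phi_\delta)\;+\;\big(\mc Y_t^n-\mc Y_0^n\big)\big(H_0(1-\phi_\delta)\big),
\]
where $\mc Q_t^n(\phi_\delta)=n^{-1/2}\sum_x\big(J_t^n(x)-\bb E_n[J_t^n(0)]\big)\big(\phi_\delta((x+1)/n)-\phi_\delta(x/n)\big)$ is a ``mollified current'' (a weighted average of $n^{-1/2}(J_t^n(x)-\bb E_n[J_t^n(0)])$ over $0\le x\le\delta n$, the weights summing to $1$), and $H_0(1-\phi_\delta)\in L^2(\bb R)$ is supported on $[0,\delta]$ and bounded by $1$, so that by stationarity $\bb E_n\big[\big((\mc Y_t^n-\mc Y_0^n)(H_0(1-\phi_\delta))\big)^2\big]\le4\chi(\rho)\delta$ uniformly in $n$.

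The first step is the a priori bound $\sup_n\bb E_n[\theta_t^n(0)^2]\le C(1+\sqrt t)$, which gives tightness of $(\theta_{t_1}^n(0),\dots,\theta_{t_k}^n(0))$ in $\bb R^k$. By the identity it suffices to bound $\mc Q_t^n(\phi_\delta)$ for a fixed $\delta$. Writing $J_t^n(x)=n^2\int_0^t j_{x,x+1}(\eta_s^n)\,ds+\mc M_t^n(x)$ and using condition iv to split $j_{x,x+1}=\tfrac12(\tau_x\mathfrak h-\tau_{x+1}\mathfrak h)+\tfrac{a}{2\sqrt n}\tau_x g$, with $\mathfrak h$ the function of the gradient condition and $g(\eta)=c(\eta)(\eta(0)+\eta(1)-2\eta(0)\eta(1))$, one sees that the gradient part of the drift and the bond-martingale $\mc M^n$ are each of order $\sqrt n$ after rescaling but cancel; the clean way to expose this is to absorb $\tfrac{n^2}{2}\int_0^t(\tau_x\mathfrak h-\tau_{x+1}\mathfrak h)(\eta_s^n)\,ds$ into the martingale via the Kipnis–Varadhan inequality (Proposition~\ref{p7}), adapting the Rost–Vares device \cite{RV} to the mollified functional. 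The remaining, genuinely fluctuating term $\tfrac{an}{2}\int_0^t\bar g(\eta_s^n)\,ds$ (with $\bar g=g-\int g\,d\nu_\rho$) has second moment bounded by $Ct\,\|\bar g\|_{-1,n}^2=O(\sqrt t)$, again by Proposition~\ref{p7}. I expect this step — making the $\sqrt n$ cancellation rigorous and uniform in $n$ — to be the main obstacle; what follows is soft.

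Next, for each fixed $\delta$, one shows that $\mc Q_t^n(\phi_\delta)$ converges in distribution, jointly with $\mc Y^n$, to a limit $\mc Q_t(\phi_\delta)$ that is a measurable functional of $\mc Y$. This is not a formal consequence of Theorem~\ref{t1} because $\phi_\delta\notin\mc S(\bb R)$, but the proof of Theorem~\ref{t1} applies with $\phi_\delta$ in place of a Schwartz test function: the action of $n^2L_n$ on the associated field involves only the compactly supported discrete first and second differences of $\phi_\delta$, so the generator computation, the Boltzmann–Gibbs principle, and the second-order Boltzmann–Gibbs principle of Section~\ref{s3} go through and produce the decomposition
\[
\mc Q_t(\phi_\delta)\;=\;\frac{\varphi'(\rho)}{2}\int_0^t\mc Y_s(\phi_\delta'')\,ds\;-\;\frac{a\beta''(\rho)}{2}\,\mc A_t(\phi_\delta)\;+\;M_t(\phi_\delta),
\]
where $\mc A_t(\phi_\delta)$ is the limit of $\int_0^t\int_{\bb R}\mc Y_s(i_\epsilon(x))^2\,\phi_\delta'(x)\,dx\,ds$ (a variant of \eqref{ec16}, finite because $\phi_\delta'$ is a bona fide mollifier) and $M_t(\phi_\delta)$ is a martingale of quadratic variation $\chi(\rho)\varphi'(\rho)t\|\phi_\delta\|_1^2<\infty$. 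In particular $\mc Q_t(\phi_\delta)$ is $\sigma(\mc Y)$-measurable.

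Finally I would combine the pieces. Along any subsequence on which $\mc Y^{n'}\Rightarrow\mc Y$ and, by the a priori bound, $(\theta_{t_i}^{n'}(0))_i$ converges jointly with $\mc Y^{n'}$, say to $(\Theta_{t_i})_i$, passing to the limit in the identity gives $\Theta_t=\mc Q_t(\phi_\delta)+(\mc Y_t-\mc Y_0)(H_0(1-\phi_\delta))$ for every $\delta$; here one uses that $\mc Y^n(f)\to\mc Y(f)$ extends from $f\in\mc S(\bb R)$ to all bounded, compactly supported $f\in L^2$, by a second-moment $\varepsilon/3$ argument based on $\mathrm{Var}(\mc Y_t^n(f))\to\chi(\rho)\|f\|_{L^2}^2$. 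Letting $\delta\downarrow0$ and invoking $\bb E[((\mc Y_t-\mc Y_0)(H_0(1-\phi_\delta)))^2]\le4\chi(\rho)\delta$ shows $\mc Q_t(\phi_\delta)\to\Theta_t$ in $L^2$; hence $\Theta_t$ is the $L^2$-limit of the $\sigma(\mc Y)$-measurable random variables $\mc Q_t(\phi_\delta)$, so it is a functional of $\mc Y$ alone and does not depend on the subsequence. This is precisely the assertion that $\mc Y_t(H_0)-\mc Y_0(H_0):=\lim_{\delta\downarrow0}\mc Q_t(\phi_\delta)$ is well defined, and then $\Theta_t=\mc J_t(0)$. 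Subsequence-independence of the limit gives convergence of the full sequence $\theta_t^n(0)$ to $\mc J_t(0)$ in the sense of finite-dimensional distributions, which completes the proof.
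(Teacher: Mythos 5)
Your argument is a genuine variant of the paper's proof, and I believe it works, but two steps need repair. The paper also runs a Rost--Vares decomposition, with the cutoff placed at the opposite end: it writes $\theta_t^n(0)=\mc Y_t^{n,*}(G_l)+\mc A_t^n(H_0-G_l)+M_t^n(H_0-G_l)+\mc I_t^n(H_0-G_l)$ with $G_l(x)=H_0(x)(1-x/l)^+$, so that the ``density'' part $G_l$ is an honest $L^2$ function (handled by the white-noise extension of $\mc Y_t$, giving a manifestly $\sigma(\mc Y)$-measurable Cauchy sequence in $l$) while the ``current'' part $H_0-G_l$ has small energy $\|H_0-G_l\|_{1,n}^2=l^{-1}$ and is killed uniformly in $n$ by the energy estimate, the martingale bound and \eqref{ec13}. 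You swap the roles: your density part $H_0(1-\phi_\delta)$ is the small one (small $L^2$ norm $\le\delta$), and your current part $\mc Q_t^n(\phi_\delta)$ carries all the structure and must itself be shown to converge and to have a limit identified through the decomposition \eqref{ec.M1}. Both routes are legitimate; the paper's is shorter because the object that survives the $l\to\infty$ limit is already an evaluation of the white noise $\mc Y_t$ against $L^2$ functions, whereas yours requires re-running the martingale/Boltzmann--Gibbs analysis for the non-Schwartz test function $\phi_\delta$. In exchange, your route makes the subsequence-independence of the limit (given $\mc Y$) more explicit, which the theorem does not actually require.

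The two points to fix. First, your justification of the summation by parts is false: for the infinite stationary system at any $t>0$, almost surely infinitely many bonds have seen a jump, so it is not true that $J_t^n(x)=0$ for $|x|$ large. The identity $\theta_t^n(0)=\mc Q_t^n(\phi_\delta)+(\mc Y_t^n-\mc Y_0^n)(H_0(1-\phi_\delta))$ is nevertheless correct, but for a different reason: since $\phi_\delta((x+1)/n)-\phi_\delta(x/n)$ and $H_0(1-\phi_\delta)$ are supported in $\{0,\dots,\lceil\delta n\rceil\}$, every sum involved is finite, and one only needs the telescoped continuity equation $J_t^n(x)=J_t^n(0)-\sum_{y=1}^x(\eta_t^n(y)-\eta_0^n(y))$ on that finite window. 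Second, you assert without proof that $\mc Q_t(\phi_\delta)$ is $\sigma(\mc Y)$-measurable; the drift and quadratic terms are, but the martingale $M_t(\phi_\delta)$ is a limit of functionals of the particle system, and since $\phi_\delta\notin\mc S(\bb R)$ you cannot read its $\sigma(\mc Y)$-measurability off the martingale identity \eqref{ec17}. This is the step your subsequence-independence conclusion (and, in your construction, the ``well-definedness'' of $\mc J_t(0)$ as a functional of $\mc Y$) hinges on. It can be repaired: taking smooth cutoffs $\chi_l$ equal to $1$ on $[-l,l]$ and supported in $[-2l,2l]$ with $|\chi_l'|\le C/l$, one has $\phi_\delta\chi_l\in\mc C_c^\infty(\bb R)$ and $\|\phi_\delta(1-\chi_l)\|_{1,n}^2\le C'/l$ uniformly in $n$, so $M_t^n(\phi_\delta\chi_l)\to M_t^n(\phi_\delta)$ in $L^2(\bb P_n)$ uniformly in $n$, and $M_t(\phi_\delta)$ is the $L^2$-limit of the $\sigma(\mc Y)$-measurable variables $M_t(\phi_\delta\chi_l)$. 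With these two repairs the proof is complete.
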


The notion of energy solutions of the equation \eqref{ec.KPZ} defined in Section \ref{s1.3} can be extended in a natural way to the equation \eqref{ec.KPZ0}. We say that a process $\{\theta_t; t \in [0,T]\}$ with trajectories in $\mc C([0,T], \mc S'(\bb R))$ and adapted to some standard filtration $\{\mc F_t; t \in [0,T]\}$ is a weak solution of \eqref{ec.KPZ0} if
\begin{itemize}
\item[i)]
There exists a process $\{\mc B_t; t \in [0,T]\}$ with trajectories in $\mc C([0,T], \mc S'(\bb R))$ and adapted to $\{\mc F_t; t\in [0,T]\}$ such that for any $G \in \mc S(\bb R)$,
\[
\lim_{\epsilon \to 0} \int_0^t \int_{\bb R}  \int_x^{x+\epsilon} \frac{G(y)}{\epsilon} \Big\{\Big(\frac{\theta_s(x+\epsilon)-\theta_s(x)}{\epsilon}\Big)^2-\frac{\chi(\rho)}{\epsilon}\Big\}  dy dx ds = \mc B_t(G).
\]
\item[ii)]
For any function $G \in \mc S(\bb R)$ the process
\[
\mc M_t(G) = \<\theta_t,G\> -\<\theta_0,G\> - \frac{\varphi'(\rho)}{2} \int_0^t \<\theta_s,G''\> ds - \frac{a \beta''(\rho)}{2} \mc B_t(G)
\]
is a martingale of quadratic variation $\chi(\rho) \varphi'(\rho) t \int G(x)^2 dx$.
\end{itemize}

Let $\{\theta_t;t \in [0,T]\}$ be a weak solution of \eqref{ec.KPZ0}. For $0\leq s<t\leq T$, let us define the fields
\[
\mc B_{s,t}(G) = \mc B_t(G) - \mc B_s(G),
\]
\[
\mc B_{s,t}^\epsilon(G) = \int_s^t \int_{\bb R} \int_x^{x+\epsilon} \Big\{\Big(\frac{\theta_t(x+\epsilon)-\theta(x)}{\epsilon}\Big)^2-\frac{\chi(\rho)}{\epsilon}\Big\}G(x) dy dx du.
\]

We say that $\{\theta_t;  \in [0,T]\}$ is an {\em energy solution} of the KPZ equation \eqref{ec.KPZ0} if there exists a constant $\kappa >0$ such that
\[
E\Big[\Big(\int_s^t \<\theta_u,G''\>du\Big)^2\Big] \leq \kappa (t-s) \int G(x)^2 dx
\]
and 
\[
E[(\mc B_{s,t}(G) - \mc B_{s,t}^\epsilon(G))^2] \leq \kappa \epsilon (t-s) \int G(x)^2 dx
\]
for any $0\leq s<t\leq T$, any $\epsilon \in (0,1)$ and any $G \in \mc S(\bb R)$.
\begin{theorem}
\label{t6}
The sequence of processes $\{\{\theta_t^n;t \in [0,T]\}; n \in \bb N\}$ is tight with respect to the $J$-Skorohod topology of $\mc D([0,T]; \mc S'(\bb R))$. Moreover, any limit point $\{\theta_t; t \in [0,T]\}$ of this sequence is an energy solution of the KPZ equation \eqref{ec.KPZ0} with initial distribution given by a two-sided Brownian motion of variance $\chi(\rho)$.
\end{theorem}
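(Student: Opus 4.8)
\emph{Proof proposal.} The backbone of the argument is the pathwise identity recorded in Section~\ref{s1.4}, which exhibits $\theta^n_t$ as a primitive of $-\mc Y^n_t$ pinned at the origin: $\theta^n_t(x)=\theta^n_t(0)-\mc Y^n_t(\mathbf 1_{(0,x]})$ for all $x$. Testing against $G\in\mc S(\bb R)$ and using Fubini gives
\[
\langle\theta^n_t,G\rangle=\Big(\int_{\bb R}G\Big)\theta^n_t(0)-\mc Y^n_t(\Psi_G),
\]
where $\Psi_G$ is the bounded, rapidly decaying primitive of $-G$ normalised to vanish at $\pm\infty$; it is smooth off the origin and has there a single jump of height $\int_{\bb R}G$. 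The problem thus splits into two inputs: the density field $\mc Y^n_t$, controlled by Theorem~\ref{t1}, and the constant of integration $\theta^n_t(0)$, the rescaled centred current through the bond $\{0,1\}$, controlled by Theorem~\ref{t4}.

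The key manipulation is to eliminate $\theta^n_t(0)$ in favour of $\mc Y^n$ alone, via the Rost--Vares-type representation underlying Theorem~\ref{t4}: for a smooth approximation $\psi_\delta$ of the Heaviside function $H_0$ one has $\theta^n_t(0)=\mc Y^n_t(\psi_\delta)-\mc Y^n_0(\psi_\delta)+R^n_\delta(t)$ with $\sup_n\bb E[\sup_{t\le T}R^n_\delta(t)^2]\to0$ as $\delta\to0$, the point being that the continuity equation identifies $\mc Y^n_t(\psi_\delta)-\mc Y^n_0(\psi_\delta)$ with the net current through $\{0,1\}$ up to an error which is asymptotically deterministic because it has escaped the region influenced by that bond on the time scale $T$. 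Substituting,
\[
\langle\theta^n_t,G\rangle=\mc Y^n_t(\Phi_{G,\delta})-\Big(\int_{\bb R}G\Big)\mc Y^n_0(\psi_\delta)+\Big(\int_{\bb R}G\Big)R^n_\delta(t),\qquad \Phi_{G,\delta}:=\Big(\int_{\bb R}G\Big)\psi_\delta-\Psi_G .
\]
The essential observation is that the jump of $(\int G)\psi_\delta$ at the origin cancels that of $\Psi_G$, so $\Phi_{G,\delta}$ is a genuine Schwartz function, converging as $\delta\to0$ to the primitive $\mc G(x)=\int_{-\infty}^xG$ of $G$ (up to a harmless far-field cutoff, since $\mc G$ is not Schwartz), with $\|\Phi_{G,\delta}\|_1^2=\int(\Phi'_{G,\delta})^2\to\int G^2$ and \emph{no} spurious divergence. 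This is exactly why one must not split $\langle\theta^n_t,G\rangle$ naively as ``current plus primitive of density'': each summand carries a martingale of quadratic variation of order $n$, and only their sum is tame.

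Granting this representation, tightness of $\{\theta^n\}$ in $\mc D([0,T],\mc S'(\bb R))$ (for the $J_1$-Skorohod topology) follows from Mitoma's criterion: for each $G$, the real process $\mc Y^n_t(\Phi_{G,\delta})$ is tight by Theorem~\ref{t1} since $\Phi_{G,\delta}\in\mc S(\bb R)$, the term $\mc Y^n_0(\psi_\delta)$ does not depend on $t$, and $R^n_\delta$ is uniformly small; sending $n\to\infty$ and then $\delta\to0$ gives tightness of $\langle\theta^n,G\rangle$, hence of $\theta^n$. To identify the limit points, pass to a subsequence along which, jointly, $\mc Y^n\to\mc Y$ (a stationary energy solution of \eqref{ec.KPZ} by Theorem~\ref{t1}) and $\theta^n(0)\to\theta_\cdot(0)$ with $\theta_t(0)=\mc Y_t(H_0)-\mc Y_0(H_0)$ by Theorem~\ref{t4}; the identity passes to the limit, defines $\theta_t$, and gives $\nabla\theta_t=-\mc Y_t$. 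Continuity of trajectories and the initial law (a two-sided Brownian motion of variance $\chi(\rho)$, by Donsker's theorem for $\theta^n_0$ together with the pinning $\theta^n_0(0)=0$) then follow directly. For the weak-solution structure one transports the corresponding statements for $\mc Y$: the field $\mc B_t(G)$ is obtained from the nonlinear field $\mc A$ of Theorem~\ref{t1} by passing to primitives and absorbing the counterterm $\chi(\rho)/\epsilon$ (legitimate because of the energy bound $\bb E[(\mc A_{s,t}(F)-\mc A^\epsilon_{s,t}(F))^2]\le\kappa\epsilon(t-s)\|F\|_1^2$ and the $|t-s|^{3/2}$-bound of Corollary~\ref{cor1}); using $\theta_s(x+\epsilon)-\theta_s(x)=-\epsilon\,\mc Y_s(i_\epsilon(x))$ one checks that $\mc B_t(G)$ is the $\epsilon\to0$ limit appearing in the definition of a weak solution of \eqref{ec.KPZ0}; the martingale $\mc M_t(G)$ and its quadratic variation $\chi(\rho)\varphi'(\rho)t\int G^2$ (obtained as $\lim_{\delta\to0}\chi(\rho)\varphi'(\rho)t\|\Phi_{G,\delta}\|_1^2$) come from the Dynkin decomposition of $\mc Y^n_t(\Phi_{G,\delta})$, and the two energy inequalities for $\theta$ are the images under $G\mapsto\mc G$ of those for $\mc Y$.

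The heart of the difficulty is the constant of integration: everything downstream of Theorems~\ref{t1} and \ref{t4} is soft, but one must (i) upgrade the finite-dimensional convergence of $\theta^n_t(0)$ in Theorem~\ref{t4} to tightness at the process level, re-running the current estimates with a supremum in time (using the spectral gap input of Proposition~\ref{p6}), and (ii) make rigorous the passage through the non-Schwartz objects $H_0$, $\Psi_G$ and $\mc G$ --- meaningful only because $\mc Y_t$ is, at each time, a white noise --- by means of variance and energy estimates, always pairing the jump of $\Psi_G$ with the Heaviside piece coming from the current before any limit in $\epsilon$ or $\delta$ is taken, so that no divergent quadratic variation is ever produced.
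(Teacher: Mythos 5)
Your proposal is correct and follows essentially the same route as the paper: you represent $\langle\theta_t^n,G\rangle$ through the current at the origin plus the density field tested against a primitive of $G$, cancel the Heaviside-type singularities via a Rost--Vares cut-off (your $\Phi_{G,\delta}=(\int G)\psi_\delta-\Psi_G$ plays exactly the role of the paper's $\mc T G+\Lambda(G)f_0^l$), and then transport tightness, the martingale structure, the nonlinear field with its Wick counterterm, and the energy bounds from Theorems~\ref{t1} and~\ref{t4}. Your observation that each summand separately carries a martingale of quadratic variation of order $n$ and only the combination is tame is precisely the point of the paper's Lemma~\ref{l2} and identity \eqref{ec20}.
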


\section{Second-order Boltzmann-Gibbs principle}
\label{s3}

Let $f:\Omega \to \bb R$ be a local function and let us define $\psi(\rho) = \int fd\nu_\rho$. In \cite{D-MPSW}, the following theorem is proved.

\begin{proposition}
\label{p4}
Let us assume that $a=0$. Then, for any function $H \in \mc S(\bb R)$ we have
\[
\lim_{n \to \infty} \bb E_n\Big[\Big(\int_0^t \frac{1}{\sqrt n} \sum_{x \in \bb Z} \big(\tau_x f(\eta_s^n) -\psi(\rho) - \psi'(\rho)(\eta_s^n(x)-\rho)\big)H(x/n)ds\Big)^2\Big] =0.
\]
\end{proposition}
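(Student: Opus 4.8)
The plan is to reduce the statement, via the Kipnis--Varadhan inequality, to an $H_{-1}$-norm bound, and then to control that norm by a single mesoscopic block replacement combined with the spectral gap estimate and the equivalence of ensembles. Since $a=0$ the generator is $L_n=n^2\mc L$, with $\mc L$ the order-one symmetric speed-change exclusion generator, and $\nu_\rho$ is reversible by condition iii). Write $G_n=\frac1{\sqrt n}\sum_{x\in\bb Z}V_xH(x/n)$ with $V_x(\eta)=\tau_xf(\eta)-\psi(\rho)-\psi'(\rho)(\eta(x)-\rho)$, a local function of zero $\nu_\rho$-mean (the sums over $x$ converge absolutely because $H\in\mc S(\bb R)$). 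Fix a scale $\ell=\ell_n$ with $n^{1/2}\ll\ell_n\ll n^{2/3}$, for instance $\ell_n=\lfloor n^{3/5}\rfloor$; let $B_x$ be the box of $\ell$ consecutive sites centred at $x$, put $\bar\eta_x^\ell=\ell^{-1}\sum_{y\in B_x}\eta(y)$, and decompose $V_x=V_x^{(1)}+V_x^{(2)}+V_x^{(3)}$ with $V_x^{(1)}=\tau_xf-\bb E_{\nu_\rho}[\tau_xf\mid\bar\eta_x^\ell]$, $V_x^{(2)}=\bb E_{\nu_\rho}[\tau_xf\mid\bar\eta_x^\ell]-\psi(\bar\eta_x^\ell)$ and $V_x^{(3)}=\psi(\bar\eta_x^\ell)-\psi(\rho)-\psi'(\rho)(\eta(x)-\rho)$, writing correspondingly $G_n=G_n^{(1)}+G_n^{(2)}+G_n^{(3)}$.

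The contributions $G_n^{(2)}$ and $G_n^{(3)}$ are already small at fixed time and are handled directly, using only the elementary inequality $\bb E_n[(\int_0^tF(\eta_s)\,ds)^2]\le t^2\,\bb E_{\nu_\rho}[F^2]$, valid by stationarity and Cauchy--Schwarz. For $G_n^{(2)}$, the strong (second-order) equivalence of ensembles, Proposition \ref{p9}, gives $|\bb E_{\nu_\rho}[\tau_xf\mid\bar\eta_x^\ell]-\psi(\bar\eta_x^\ell)|\le C\ell^{-1}$ uniformly, so $|G_n^{(2)}|\le C\ell^{-1}n^{-1/2}\sum_x|H(x/n)|=O(n^{1/2}/\ell)\to0$ almost surely. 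For $G_n^{(3)}$, Taylor expansion of $\psi$ (a polynomial in $\rho$) around $\rho$ gives $V_x^{(3)}=\psi'(\rho)(\bar\eta_x^\ell-\eta(x))+\frac12\psi''(\rho)(\bar\eta_x^\ell-\rho)^2+R_x$ with $|R_x|\le C|\bar\eta_x^\ell-\rho|^3$. A summation by parts rewrites $\frac1{\sqrt n}\sum_x(\bar\eta_x^\ell-\eta(x))H(x/n)$ as $\frac1{\sqrt n}\sum_y(\eta(y)-\rho)b_y$ with $\sum_yb_y=0$ and $\|b\|_\infty=O(\ell^2/n^2)$ (the first-order term of the discrete Taylor expansion of $H$ cancels because $B_x$ is centred), so its $L^2(\nu_\rho)$ norm is $O(\ell^2/n^2)$; the quadratic term has $\nu_\rho$-mean $\frac{\psi''(\rho)\chi(\rho)}{2\ell}\frac1{\sqrt n}\sum_xH(x/n)=O(n^{1/2}/\ell)$, which vanishes because $\ell\gg n^{1/2}$, while its fluctuation about that mean has $L^2(\nu_\rho)$ norm $O(\ell^{-1/2})$ (using $\operatorname{Cov}_{\nu_\rho}((\bar\eta_x^\ell-\rho)^2,(\bar\eta_y^\ell-\rho)^2)=O((\ell-|x-y|)_+^2\ell^{-4})$); and $\bb E_{\nu_\rho}[R_x^2]=O(\ell^{-3})$ produces $O(n/\ell^3)\to0$. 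Hence $\bb E_n[(\int_0^t(G_n^{(2)}+G_n^{(3)})(\eta_s)\,ds)^2]\to0$.

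The heart of the proof is $G_n^{(1)}$, which must be killed dynamically. By Kipnis--Varadhan (reviewed in Section \ref{s3.1}), $\bb E_n[(\int_0^tG_n^{(1)}(\eta_s)\,ds)^2]\le Ct\,n^{-2}\|G_n^{(1)}\|_{-1,\mc L}^2$, with $\|W\|_{-1,\mc L}^2=\sup_g\{2\langle W,g\rangle_{\nu_\rho}-\langle g,(-\mc L)g\rangle_{\nu_\rho}\}$. The crucial point is that $V_x^{(1)}$ has vanishing conditional expectation given $(\bar\eta_x^\ell,\eta|_{B_x^c})$, hence lies in the range of the block generator $\mc L_{B_x}$ (the sum of the bond terms inside $B_x$). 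Writing $\tilde g_x=g-\bb E_{\nu_\rho}[g\mid\bar\eta_x^\ell,\eta|_{B_x^c}]$ one has $\langle V_x^{(1)},g\rangle_{\nu_\rho}=\langle V_x^{(1)},\tilde g_x\rangle_{\nu_\rho}\le\|V_x^{(1)}\|_{-1,\mc L_{B_x}}\|\tilde g_x\|_{1,\mc L_{B_x}}$ and $\|\tilde g_x\|_{1,\mc L_{B_x}}^2=\langle g,(-\mc L_{B_x})g\rangle_{\nu_\rho}$. The sharp spectral gap estimate, Proposition \ref{p7}, gives $\|V_x^{(1)}\|_{-1,\mc L_{B_x}}^2\le\operatorname{gap}(\mc L_{B_x})^{-1}\bb E_{\nu_\rho}[(V_x^{(1)})^2]\le C\ell^2\|f\|_\infty^2$, uniformly in $x$ and in the configuration outside $B_x$ (for speed-change rates this follows from the classical simple-exclusion bound through the ellipticity comparison \eqref{ec0.1}). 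Applying Cauchy--Schwarz over $x$, with $\sum_xH(x/n)^2=O(n)$ and $\sum_x\langle g,(-\mc L_{B_x})g\rangle_{\nu_\rho}\le\ell\langle g,(-\mc L)g\rangle_{\nu_\rho}$ (each bond lies in at most $\ell$ boxes $B_x$), we obtain $|\langle G_n^{(1)},g\rangle_{\nu_\rho}|\le C\ell^{3/2}\langle g,(-\mc L)g\rangle_{\nu_\rho}^{1/2}$, whence $\|G_n^{(1)}\|_{-1,\mc L}^2=O(\ell^3)$ and $\bb E_n[(\int_0^tG_n^{(1)})^2]=O(t\ell^3/n^2)\to0$ because $\ell\ll n^{2/3}$. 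Adding the three pieces proves the proposition.

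The hard part is this $G_n^{(1)}$ estimate, and specifically the two structural inputs it rests on: the order-$\ell^2$ spectral gap for the speed-change exclusion restricted to a box, uniform in the particle number and in the boundary configuration (Proposition \ref{p7}, obtained for speed-change rates from the simple-exclusion case via \eqref{ec0.1}), and the combinatorial bookkeeping that turns the $H_{-1}$ norm of the full field into a sum of per-box norms with overlap multiplicity $\ell$. Having to satisfy $\ell\gg n^{1/2}$ (forced by $G_n^{(2)}$ and by the mean of the quadratic part of $G_n^{(3)}$) and $\ell\ll n^{2/3}$ (forced by Kipnis--Varadhan applied to $G_n^{(1)}$) simultaneously is exactly why a genuinely $n$-dependent, mesoscopic block size must be used; no fixed macroscopic fraction of $n$ works. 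For the second-order Boltzmann--Gibbs principle proved in Section \ref{s3}, this single-scale replacement is no longer sufficient and must be replaced by the multiscale iteration described there.
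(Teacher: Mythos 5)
Your proof is correct, but note first that the paper does not prove Proposition \ref{p4} at all: it is quoted as a known result of De Masi, Presutti, Spohn and Wick \cite{D-MPSW}, and the paper's own effort goes into the second-order version, Theorem \ref{t2}. What you have written is a legitimate self-contained proof built from exactly the three ingredients the paper assembles in Section \ref{s3.1} --- the Kipnis--Varadhan inequality (Proposition \ref{p5}), the spectral gap estimate (Proposition \ref{p6}, which you repeatedly mislabel as Proposition \ref{p7}), and the equivalence of ensembles (Proposition \ref{p8}; your citation of Proposition \ref{p9} for the bound on $E[\tau_xf\mid\bar\eta^\ell_x]-\psi(\bar\eta^\ell_x)$ is off, since Proposition \ref{p9} presupposes $\psi(\rho)=\psi'(\rho)=0$, which you do not assume) --- but deployed at a single mesoscopic scale $n^{1/2}\ll\ell_n\ll n^{2/3}$ rather than through the paper's seed-plus-dyadic-iteration. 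The single scale suffices here precisely because the field carries the weight $n^{-1/2}H(x/n)$ instead of the $\sqrt n$-amplified weight $\nabla^n_xG$ of Theorem \ref{t2}, so an $o(1)$ total error is enough and your two constraints on $\ell_n$ (lower bound from the static terms, upper bound from Kipnis--Varadhan) are compatible; you correctly point out that this collapses for the second-order statement, where the multiscale iteration is unavoidable. Your bookkeeping checks out: the overlap factor $\ell$ in $\sum_x\langle g,(-\mc L_{B_x})g\rangle\le\ell\langle g,(-\mc L)g\rangle$, the cancellation of the first-order increment of $H$ from centring the boxes, the covariance $O\big((\ell-|x-y|)_+^2\ell^{-4}\big)$ of the squared block averages, and the resulting window $\ell^3\ll n^2$ are all right. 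One further remark: the proposition also falls out of the paper's own machinery with no extra work. Replacing $f$ by $\tilde f=f-\psi(\rho)-\psi'(\rho)(\eta(0)-\rho)$, which satisfies $\tilde\psi(\rho)=\tilde\psi'(\rho)=0$, and applying the final estimate of Section \ref{s3.2} with $H_x=H(x/n)$ (allowed by the closing remark there, since $\|G\|^2_{1,n}=n^{-1}\sum_xH(x/n)^2$ is uniformly bounded in $n$) gives $\bb E_n[\mc A^n_t(G)^2]\le C\,(t\epsilon+t^2\epsilon^{-1})$ uniformly in $n$, cf.\ \eqref{ec12}; since the field of Proposition \ref{p4} is $n^{-1/2}\mc A^n_t(G)$, its second moment is $O(1/n)$ for fixed $\epsilon$.
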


This result is the celebrated Boltzmann-Gibbs principle introduced by Rost. It roughly says that the fluctuation field associated to $f$ is asymptotically equivalent to a multiple of the density fluctuation field $\mc Y_t^n$. The idea is the following. Particles are neither created nor destroyed by the dynamics. Therefore, in order to equilibrate a local fluctuation of the number of particles, it is necessary to transport it to another region. The density of particles is the only locally conserved quantity of the system. Due to the ellipticity condition i), the process has good ergodic properties. Therefore, a fluctuation of a non conserved quantity will be locally equilibrated. If we look at the process in the right time scaling, the only fluctuation we will see will be the fluctuation of the density; other fluctuations being too fast to be observed in that scale. 

Notice that if $\psi'(\rho)=0$, Proposition~\ref{p4} does not give a lot of information: it simply asserts that the fluctuation field associated to $f$ asymptotically vanishes. Let $f: \Omega \to \bb R$ be a local function such that $\psi'(\rho)=0$. For ease of notation we also assume $\psi(\rho)=0$. For $G \in \mc S(\bb R)$ and $n \in \bb N$ we define the function $\nabla_\cdot^n G: \bb Z \to \bb R$ by
\[
\nabla_x^n G = n\Big\{G\Big(\frac{x+1}{n}\Big) - G\Big(\frac{x}{n}\Big)\Big\}.
\]
In other words, $\nabla_x^n G$ is a discrete approximation of $G'(x/n)$.
Let us define the field $\mc A_t^n$ with values in $\mc S'(\bb R)$ as
\[
\mc A_t^n(G) = \int_0^t \sum_{x \in \bb Z} \tau_x f(\eta_s^n) \nabla_x^n G ds
\]
Notice that the field $\mc A_t^n$ is analogous to the integral field appearing in Proposition~\ref{p4} multiplied by $\sqrt n$ and evaluated in $G'$. It turns out that the prefactor $\sqrt n$ will make appear  a non-vanishing limit for $\mc A_t^n$. Another important observation is that, unlike the case in Proposition~\ref{p4}, the field inside the integral {\em does not converge} to any limit as $n \to \infty$. In fact, its variance grows like $n$. Therefore, the convergence of $\mc A_t^n$ to a well defined limit will be a purely dynamical feature of the process $\eta_t^n$.

In order to state what we call the second-order Boltzmann-Gibbs principle, we need some notation. For $\epsilon >0$ we denote by $\epsilon \bb Z$ the set $\{\epsilon z; z \in \bb Z\}$. From now on, quantities like $\epsilon n$ are treated as if they were integers, meaning sometimes $\lfloor \epsilon n\rfloor$ and sometimes $\lceil \epsilon n \rceil$.

\begin{theorem}[Second-order Botzmann-Gibbs principle]
\label{t2}
For any function $G \in \mc S(\bb R)$,
\[
\lim_{\epsilon \to 0} \limsup_{n \to \infty} \bb E_n\Big[\Big(\mc A_t^n(G) - \frac{\psi''(\rho)}{2}\int_0^t \sum_{x \in \epsilon \bb Z} \mc Y_s^n(i_\epsilon(x))^2\big(G(x+\epsilon)-G(x)\big) ds\Big)^2\Big] =0.
\]
\end{theorem}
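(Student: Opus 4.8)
The plan is to run a multiscale (renormalization) scheme whose only inputs are the sharp spectral gap estimate of Proposition~\ref{p6} --- which gives a Poincar\'e constant of order $\ell^2$ for the dynamics restricted to a box of $\ell$ sites at a fixed number of particles --- and the Kipnis--Varadhan inequality of Proposition~\ref{p7}, which bounds $\bb E_n[(\int_0^t V(\eta_s^n)\,ds)^2]$ by $C_0 t\,\|V\|_{-1,n}^2$, where $\|V\|_{-1,n}^2=\sup_g\{2\langle V,g\rangle_{\nu_\rho}-\langle g,-S_n g\rangle_{\nu_\rho}\}$ is the $H_{-1}$ norm attached to the symmetric part $S_n$ of $L_n$ (the weak asymmetry $p_n-q_n=a/\sqrt n$ being absorbed by the sector condition built into Proposition~\ref{p7}, and ellipticity \eqref{ec0.1} letting one replace $-S_n$ by the simple-exclusion Dirichlet form throughout). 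For $\ell\geq R(f)$ let $f_\ell=\bb E_{\nu_\rho}[f\mid N_\ell]$ be the conditional expectation of $f$ given the number $N_\ell$ of particles in $\{0,\dots,\ell-1\}$, a function of the block average $\overline\eta^{\ell}:=N_\ell/\ell$. Since $\psi(\rho)=\psi'(\rho)=0$, I would interpolate along the chain
\[
\tau_x f\;\rightsquigarrow\;\tau_x f_{\ell_0}\;\rightsquigarrow\;\tau_x f_{2\ell_0}\;\rightsquigarrow\;\cdots\;\rightsquigarrow\;\tau_x f_{\epsilon n}\;\rightsquigarrow\;\tfrac{\psi''(\rho)}{2}\big[(\tau_x\overline\eta^{\epsilon n}-\rho)^2-\tfrac{\chi(\rho)}{\epsilon n}\big],
\]
each arrow being estimated after multiplication by $\nabla_x^n G$, summation over $x\in\bb Z$ and integration in time.

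\emph{Seed and iterative bound.} Fix $\ell_0=R(f)$. For a local function $V$ supported on $m$ sites with zero conditional mean given the particle number there, one has $\langle V,g\rangle_{\nu_\rho}=\langle V,g-\bb E[g\mid N]\rangle_{\nu_\rho}\leq\|V\|_{L^2(\nu_\rho)}\,(\bb E\,\mathrm{Var}(g\mid N))^{1/2}$, which by Proposition~\ref{p6} is $\leq C m\,\|V\|_{L^2(\nu_\rho)}\,\mc D_m(g)^{1/2}$ with $\mc D_m(g)$ the Dirichlet form of $g$ on the bonds of the block. Applying this with $V_x=\tau_x(f_\ell-f_{2\ell})$ (and for the seed with $V_x=\tau_x(f-f_{\ell_0})$), Cauchy--Schwarz in $x$, the bounded overlap of blocks (each bond lies in $O(\ell)$ of them), and $\sum_x(\nabla_x^n G)^2\leq Cn\|G\|_1^2$, I get
\[
\Big\|\sum_{x}V_x\,\nabla_x^n G\Big\|_{-1,n}^2\;\leq\;\frac{C\,\ell^3\,\|f_\ell-f_{2\ell}\|_{L^2(\nu_\rho)}^2}{n}\,\|G\|_1^2 .
\]
Here $\psi(\rho)=\psi'(\rho)=0$ is decisive: the second-order equivalence of ensembles (Proposition~\ref{p9}) gives $f_\ell=\tfrac{\psi''(\rho)}{2}\big((\overline\eta^{\ell}-\rho)^2-\chi(\rho)/\ell\big)+r_\ell$ with $\|r_\ell\|_{L^2(\nu_\rho)}=O(\ell^{-3/2})$, hence $\|f_\ell-f_{2\ell}\|_{L^2(\nu_\rho)}=O(\ell^{-1})$ (the constants $\chi(\rho)/\ell$ kill the first moment of the quadratic difference). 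Thus the seed costs $O(t\ell_0^3/n)\to0$ as $n\to\infty$, each doubling step costs at most $C t\,\ell\,n^{-1}\|G\|_1^2$, and summing the $L^2(\bb P_n)$-norms over the $\sim\log_2(\epsilon n/\ell_0)$ dyadic scales (Minkowski) gives $\leq C\sqrt t\,\sqrt{\ell_0/n}\sum_k 2^{k/2}\,\|G\|_1\leq C\sqrt{t\epsilon}\,\|G\|_1$; so the whole chain from $\ell_0$ to $\epsilon n$ contributes only $O(t\epsilon)\|G\|_1^2$ to the second moment, uniformly in $n$.

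\emph{Identification of the limit.} Replacing $f_{\epsilon n}$ by $\tfrac{\psi''(\rho)}{2}\big((\overline\eta^{\epsilon n}-\rho)^2-\chi(\rho)/\epsilon n\big)$ costs at most $C t^2(\epsilon n)^{-1}\|G\|_1^2\to0$ as $n\to\infty$, by Cauchy--Schwarz in time using $\|r_{\epsilon n}\|_{L^2(\nu_\rho)}=O((\epsilon n)^{-3/2})$, $\bb E_{\nu_\rho}[r_{\epsilon n}]=0$ and the $O(\epsilon n)$ range of $r_{\epsilon n}$. A direct computation --- from $\mc Y_s^n(i_\epsilon(\epsilon k))=\sqrt n(\overline\eta_s^{\epsilon n}(\epsilon kn+1)-\rho)$ and the telescoping identity $\sum_{x=km}^{(k+1)m-1}\nabla_x^n G=n(G(\epsilon(k+1))-G(\epsilon k))$ with $m=\epsilon n$, the subtracted constant dropping out because $\sum_x\nabla_x^n G=0$ --- shows that $\tfrac{\psi''(\rho)}{2}\sum_{y\in\epsilon\bb Z}\mc Y_s^n(i_\epsilon(y))^2(G(y+\epsilon)-G(y))$ equals $\tfrac{\psi''(\rho)}{2}\sum_{x\in\bb Z}(\overline\eta_s^{\epsilon n}(x_j)-\rho)^2\nabla_x^n G$ with $x_j=\epsilon n\lfloor x/(\epsilon n)\rfloor+1$ the left end of the block of $x$. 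It remains to pass from the sliding average $\overline\eta_s^{\epsilon n}(x)$ to its block representative $\overline\eta_s^{\epsilon n}(x_j)$: the summand $(\overline\eta^{\epsilon n}(x)-\rho)^2-(\overline\eta^{\epsilon n}(x_j)-\rho)^2$ is supported on $O(\epsilon n)$ sites, has $L^2(\nu_\rho)$-norm $O((\epsilon n)^{-1})$, and has zero conditional mean given the particle number in the enclosing block of size $\leq2\epsilon n$ (the two windows are size-$(\epsilon n)$ subsets of that block, hence exchangeable), so the same Kipnis--Varadhan/spectral-gap estimate as above bounds its contribution by $O(t\epsilon)\|G\|_1^2$. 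Adding the four estimates and letting first $n\to\infty$, then $\epsilon\to0$, finishes the proof.

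\emph{Main difficulty.} The heart of the matter is the iterative bound: the $H_{-1}$ estimate loses a factor $\ell^3$ per scale ($\ell^2$ from the Poincar\'e constant, $\ell$ from the overlap of blocks), and this is compensated only by $\|f_\ell-f_{2\ell}\|_{L^2(\nu_\rho)}^2=O(\ell^{-2})$, a gain that depends simultaneously on the \emph{sharpness} of the spectral gap in Proposition~\ref{p6} (a gap of order $\ell^{-2-\delta}$ would already break the dyadic sum) and on $\psi'(\rho)=0$, failing which one obtains only $O(\ell^{-1})$ and the dyadic sum is of order $\epsilon\sqrt n$ and diverges. A secondary point requiring care is to track which replacements enjoy a genuine \emph{conditional} mean-zero property --- so that Proposition~\ref{p6} applies --- versus only a global one, where one must fall back on the cruder Cauchy--Schwarz in time, as in the equivalence-of-ensembles step.
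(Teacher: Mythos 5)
Your proposal is correct and follows essentially the same route as the paper: a seed step plus a dyadic iterative bound controlled by Kipnis--Varadhan, the sharp spectral gap and the second-order equivalence of ensembles, with the per-scale cost $O(t\ell/n)$ and total cost $O(t\epsilon)$ matching Theorem~\ref{t3} and estimate \eqref{ec8}. The only (harmless) organizational differences are that you keep sliding sums and count block overlaps where the paper passes to disjoint even/odd blocks with block-averaged weights $H_x^{k}$, and that you start the iteration at $\ell_0=R(f)$ rather than at $k_0=\sqrt{\epsilon n}$, which forces you to add the final sliding-to-block replacement that the paper's formulation lands on directly.
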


 This result is telling us {\em ``grosso modo''} that the field $\mc A_t^n$ is asymptotically equivalent to the field $(\mc Y_t^n)^2$. Since $\mc Y_t^n$ is a distribution, the square $(\mc Y_t^n)^2$ should be defined through some type of regularization, which is exactly what Theorem \ref{t2} is saying. According to Assing \cite{Ass1}, the expression $(\mc Y_t^n)^2$ can not be defined as the limit in some sense of the regularizing sequence $\mc Y_t^n(i_\epsilon(x))^2$, at least not as a distribution. 

The proof of Theorem~\ref{t2} is based on the multiscale analysis introduced in the article \cite{Gon}. We will divide the proof  into two parts. The basics elements used in the proof are the Kipnis-Varadhan inequality, a sharp estimate of the spectral gap of the generator $L_n$ restricted to finite boxes and the equivalence between the grandcanonical and the canonical ensembles. In Section \ref{s3.1} we recall the basic elements of the proof and in Section \ref{s3.2} we explain the multiscale analysis.

\subsection{Elements of proof}
\label{s3.1}
In this section we recall Kipnis-Varadhan and spectral gap inequalities, which will allow us to estimate the variance of various additive functionals related to the process $\mc A_t^n$. Our aim is to establish Proposition~\ref{p7} and Proposition~\ref{p9}. In Section \ref{s3.2} we will see that Theorem \ref{t2} follows from Proposition~\ref{p7} and \ref{p9} without further assumptions. Let $f:\Omega \to \bb R$ be a function in $L^2(\nu_\rho)$ such that $\int f d\nu_\rho =0$ for any $\rho \in [0,1]$. We start recalling Kipnis-Varadhan inequality.

\begin{proposition}[\cite{KV,CLO}]
\label{p5}
For any $T>0$,
\[
\bb E_n\Big[\sup_{0\leq t\leq T} \Big(\int_0^t f(\eta_s^n)ds\Big)^2\Big] \leq 20 T \|f\|_{-1,n}^2, 
\]
where
\[
\|f\|_{-1,n}^2 = \sup_{g \in L^2(\nu_\rho)} \big\{ 2\<f,g\>_\rho - \<g, -L_n g\>_\rho\big\}
\]
and $\<\cdot,\cdot\>_\rho$ denotes the inner product in $L^2(\nu_\rho)$.
\end{proposition}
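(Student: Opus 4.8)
The plan is the resolvent method of Kipnis and Varadhan. Decompose $L_n = S_n + A_n$ into its symmetric and antisymmetric parts in $L^2(\nu_\rho)$; this is legitimate because $\nu_\rho$ is invariant for $\eta_t^n$ (condition iv) and \cite{Nag}). Since $\<g,-L_n g\>_\rho = \<g,-S_n g\>_\rho$, the norm $\|f\|_{-1,n}$ is the dual of the Dirichlet norm $\|g\|_{1,n}^2 := \<g,-S_n g\>_\rho$, and we may assume $\|f\|_{-1,n}<\infty$. Fix $T>0$, set $\lambda = 1/T$, and let $u_\lambda := (\lambda - L_n)^{-1}f \in L^2(\nu_\rho)$. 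Testing the resolvent equation against $u_\lambda$ and using Young's inequality yields the two elementary bounds $\lambda\|u_\lambda\|_\rho^2 \le \tfrac12\|f\|_{-1,n}^2$ and $\|u_\lambda\|_{1,n}^2 \le \|f\|_{-1,n}^2$; the same holds for the adjoint resolvent $u_\lambda^* := (\lambda - L_n^*)^{-1}f$.

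By Dynkin's formula the process $N_t := u_\lambda(\eta_t^n) - u_\lambda(\eta_0^n) - \int_0^t L_n u_\lambda(\eta_s^n)\,ds$ is a $\bb P_n$-martingale with carré du champ $\Gamma_n(g) = L_n(g^2) - 2gL_n g \ge 0$, so by stationarity $\bb E_n[N_T^2] = T\int \Gamma_n(u_\lambda)\,d\nu_\rho = 2T\|u_\lambda\|_{1,n}^2 \le 2T\|f\|_{-1,n}^2$. Writing $L_n u_\lambda = \lambda u_\lambda - f$ and rearranging gives the identity
\[
\int_0^t f(\eta_s^n)\,ds = \lambda\int_0^t u_\lambda(\eta_s^n)\,ds + \big(u_\lambda(\eta_0^n) - u_\lambda(\eta_t^n)\big) + N_t.
\]
Three of the four ingredients are now immediate: Doob's $L^2$ inequality gives $\bb E_n[\sup_{t\le T}N_t^2] \le 4\bb E_n[N_T^2] \le 8T\|f\|_{-1,n}^2$; Cauchy--Schwarz and stationarity give $\bb E_n[\sup_{t\le T}(\lambda\int_0^t u_\lambda\,ds)^2] \le \lambda^2 T^2\|u_\lambda\|_\rho^2 = \|u_\lambda\|_\rho^2 \le \tfrac T2\|f\|_{-1,n}^2$; and stationarity gives $\bb E_n[u_\lambda(\eta_0^n)^2] = \|u_\lambda\|_\rho^2 \le \tfrac T2\|f\|_{-1,n}^2$.

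The only delicate term is $u_\lambda(\eta_t^n)$ inside the supremum, since $u_\lambda$ has no pointwise bound and stationarity controls it only at a fixed time. The remedy is to pair the identity above with its backward analogue, obtained by running the same computation along the time-reversed process $\check\eta_s^n := \eta_{T-s}^n$ (stationary Markov, generator $L_n^*$) with resolvent $u_\lambda^*$. Adding the forward identity at time $t$ to the backward one, the contributions $-u_\lambda(\eta_t^n)$ and $+u_\lambda^*(\eta_t^n)$ at the running time combine into $(u_\lambda^* - u_\lambda)(\eta_t^n)$; in the reversible case $u_\lambda = u_\lambda^*$ this cancels identically, $2\int_0^t f\,ds$ is expressed through a forward martingale, a backward martingale and drift terms all controlled as above, and bookkeeping of the constants in Doob's inequality gives exactly the factor $20$. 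Since here $a\ne 0$ the process is genuinely non-reversible, and a residue $-2g_\lambda(\eta_t^n)$ remains, where $g_\lambda = \tfrac12(u_\lambda - u_\lambda^*)$ solves $(\lambda - S_n)g_\lambda = A_n h_\lambda$ with $h_\lambda = \tfrac12(u_\lambda + u_\lambda^*)$. The ellipticity condition i), which through \eqref{ec0.1} makes $-S_n$ comparable to the symmetric exclusion Dirichlet form and supplies the sector bound $\|A_n g\|_{-1,n} \le C\|g\|_{1,n}$, gives $\|g_\lambda\|_{1,n}^2 \le C^2\|h_\lambda\|_{1,n}^2 \le C^2\|f\|_{-1,n}^2$; applying the forward--backward scheme once more to $g_\lambda$ absorbs this residue into martingale-plus-drift terms of size $O(T\|f\|_{-1,n}^2)$. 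This final step --- carrying the boundary term $u_\lambda(\eta_t^n)$ under the supremum past the loss of reversibility --- is the main obstacle, and is exactly the content of \cite{KV,CLO}; everything else is the resolvent bookkeeping recorded above.
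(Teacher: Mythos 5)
You should first note that the paper offers no proof of Proposition \ref{p5} at all: it is quoted from \cite{KV} (reversible case) and \cite{CLO} (general case), so there is no in-house argument to compare with. Your resolvent setup is the right skeleton, and the elementary estimates ($\lambda\|u_\lambda\|_\rho^2\le\tfrac12\|f\|_{-1,n}^2$, $\<u_\lambda,-S_nu_\lambda\>_\rho\le\|f\|_{-1,n}^2$, the quadratic-variation identity, Doob, and the bounds on $\lambda\int_0^tu_\lambda\,ds$ and $u_\lambda(\eta_0^n)$) are all correct. The gap is in the one step you yourself flag as delicate: removing $u_\lambda(\eta_t^n)$ from under the supremum. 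You cancel it against $u_\lambda^*(\eta_t^n)$ from the backward decomposition and then control the residue $u_\lambda-u_\lambda^*$ by a sector condition $\|A_ng\|_{-1,n}\le C\<g,-S_ng\>_\rho^{1/2}$ that you claim is ``supplied by ellipticity''. It is not: the bound \eqref{ec0.1} compares the \emph{symmetric} Dirichlet forms of $L_n$ and $L_n^{ex}$ and gives no control of the antisymmetric part $A_n$ by the symmetric part. Worse, for the one-dimensional asymmetric exclusion process the sector condition is known to fail; its failure is precisely why the quadratic KPZ term survives in the scaling limit, so assuming it would assume away the phenomenon the paper studies. Even granting a finite sector constant $C$, your final bound would be of order $C^2T\|f\|_{-1,n}^2$ rather than the universal $20T\|f\|_{-1,n}^2$ on which Proposition \ref{p7} and the whole multiscale analysis depend; and your last step, ``applying the forward--backward scheme once more to $g_\lambda$'', regenerates an antisymmetric drift at the running time, so the iteration does not visibly terminate.

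The repair --- essentially the route of \cite{CLO} --- is to run the forward and backward Dynkin decompositions with the \emph{same} function, namely the resolvent of the symmetric part, $v_\lambda=(\lambda-S_n)^{-1}f$ with $\lambda=1/T$, which obeys the same two elementary bounds. Writing $f=\lambda v_\lambda-L_nv_\lambda+A_nv_\lambda$ along the forward process and $f=\lambda v_\lambda-L_n^*v_\lambda-A_nv_\lambda$ along the reversed one (whose generator is $L_n^*=S_n-A_n$), and adding the two resulting expressions for $F(t)=\int_0^tf(\eta_s^n)\,ds$, the running-time boundary terms $-v_\lambda(\eta_t^n)$ and $+v_\lambda(\eta_t^n)$ cancel identically because the same $v_\lambda$ appears in both, while the antisymmetric drifts recombine into $\int_0^TA_nv_\lambda(\eta_s^n)\,ds$, which no longer depends on $t$ and is eliminated by evaluating the identity at $t=T$. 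One is left with
\[
2F(t)=N_t-N_T-\check N_{T-t}+2F(T)-2\lambda\int_t^Tv_\lambda(\eta_s^n)\,ds,
\]
where $N$ and $\check N$ are the forward and backward martingales, each with mean quadratic variation $2t\<v_\lambda,-S_nv_\lambda\>_\rho$; every term on the right is then bounded by a universal constant times $T\|f\|_{-1,n}^2$ with no sector condition, and careful bookkeeping yields the stated constant.
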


This inequality was proved in the reversible case ($a=0$) by Kipnis and Varadhan \cite{KV} and in the general case by Chang, Landim and Olla \cite{CLO}.  Kipnis and Varadhan also proved that this bound is sharp in the reversible case.

This proposition is not very useful unless we have an effective way to compute the Sobolev norm $\|f\|_{-1,n}$. Let us recall the equivalence \eqref{ec0.1}. This equivalence leads us to define the {\em Dirichlet form} $\mc D: L^2(\nu_\rho) \to \bb R$ as
\[
\mc D(f) = \sum_{x \in \bb Z} \int \big(\nabla_{x,x+1} f(\eta)\big)^2 \nu_\rho(d\eta).
\]
Relation \eqref{ec0.1} now reads $\epsilon_0 n^2 \mc D(f) \leq \<f,-L_n f\>_\rho \leq \epsilon_0 n^2 \mc D(f)$ for any local function $f: \Omega \to \bb R$. For each $x \in \bb Z$, let us define
\[
\mc D_x(f) = \int \big(\nabla_{x,x+1} f(\eta)\big)^2 \nu_\rho(d\eta),
\]
so that $\mc D(f) = \sum_x \mc D_x(f)$.

Take $A \subseteq \bb Z$ and $f:\Omega \to \bb R$. We say that $\supp(f) \subseteq A$ if $f(\eta) = f(\xi)$ whenever $\eta(x) = \xi(x)$ for any $xÊ\in A$. Let us define $\mc F_A = \sigma(\eta(x); x \in A)$, the $\sigma$-algebra generated by the coordinates of $\eta$ in $A$. Notice that $\supp(f) \subseteq A$ if and only if $f$ is $\mc F_A$-mesurable. The function $\mc D_x(f)$ has the following convexity property: for $A \subseteq \bb Z$ and $x \in \bb Z$ such that $\{x,x+1\}\subseteq A$ we have
\[
\mc D_x(E[f|\mc F_A]) \leq \mc D_x(f).
\]
Here and below, conditional expectations will always be taken with respect to the measure $\nu_\rho$. 
The next estimate, known as the spectral gap inequality connects the variance of a local function $f$ with its Dirichlet form $\mc D(f)$:

\begin{proposition}[\cite{Qua,DS-C}]
\label{p6}
There exists a universal constant $\lambda_0$ such that for any $k \in \bb N$ and any function $f$ with $\supp(f) \subseteq \{1,\dots,k\}$, such that $\int fd\nu_\rho=0$ for any $\rho \in [0,1]$ we have
\begin{equation}
\label{ec.SG}
\int f^2 d\nu_\rho \leq \lambda_0 k^2 \sum_{x=1}^{k-1} \mc D_x(f).
\end{equation}
\end{proposition}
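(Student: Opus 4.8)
The plan is to reduce the inequality to the classical spectral gap bound for the symmetric simple exclusion process with a \emph{fixed} number of particles, for which the constant is known to be of order $k^2$ uniformly in the number of particles, and to use the hypothesis $\int f\,d\nu_\rho=0$ for \emph{all} $\rho$ in order to kill the non-canonical part of the variance. First I would regard $f$ as a function on $\{0,1\}^{\{1,\dots,k\}}$, which is legitimate since $\supp(f)\subseteq\{1,\dots,k\}$. Let $N=N(\eta)=\sum_{x=1}^k\eta(x)$ and, for $0\le j\le k$, let $\nu_{k,j}$ be the uniform measure on configurations of $\{0,1\}^{\{1,\dots,k\}}$ with $N=j$. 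Since the marginal of $\nu_\rho$ on the box conditioned on $\{N=j\}$ is exactly $\nu_{k,j}$, conditioning on the value of $N$ gives
\[
\int f^2\,d\nu_\rho=\sum_{j=0}^k\nu_\rho(N=j)\int f^2\,d\nu_{k,j},
\]
and, because every swap $\eta\mapsto\eta^{x,x+1}$ with $1\le x\le k-1$ preserves $N$ and maps $\nu_{k,j}$ onto itself, the Dirichlet form decomposes the same way:
\[
\mc D_x(f)=\sum_{j=0}^k\nu_\rho(N=j)\int(\nabla_{x,x+1}f)^2\,d\nu_{k,j}.
\]

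Next I would exploit that $f$ integrates to zero against \emph{every} Bernoulli measure, not just one. Put $\phi(j)=\int f\,d\nu_{k,j}$. Then $\int f\,d\nu_\rho=\sum_{j=0}^k\binom{k}{j}\rho^j(1-\rho)^{k-j}\phi(j)$ is a polynomial in $\rho$ of degree at most $k$ which vanishes identically on $[0,1]$, hence is the zero polynomial; since the Bernstein polynomials $\binom{k}{j}\rho^j(1-\rho)^{k-j}$, $0\le j\le k$, are linearly independent, we get $\phi(j)=0$ for every $j$. Thus $f$ has mean zero under each canonical measure $\nu_{k,j}$, so $\int f^2\,d\nu_{k,j}=\mathrm{Var}_{\nu_{k,j}}(f)$ for all $j$.

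Finally I would invoke the spectral gap estimate for the finite exclusion dynamics: there is a universal constant $\lambda_0$ such that, for every $k$ and every $j$,
\[
\mathrm{Var}_{\nu_{k,j}}(f)\le\lambda_0 k^2\sum_{x=1}^{k-1}\int(\nabla_{x,x+1}f)^2\,d\nu_{k,j},
\]
which is precisely the content of the results of Quastel \cite{Qua} and Diaconis--Shahshahani \cite{DS-C} (the spectral gap of the nearest-neighbour exclusion dynamics on $k$ sites is of order $k^{-2}$, uniformly in the number of particles present). Multiplying by $\nu_\rho(N=j)$, summing over $j$, and using the two decompositions above yields $\int f^2\,d\nu_\rho\le\lambda_0 k^2\sum_{x=1}^{k-1}\mc D_x(f)$, as claimed.

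The only genuinely nontrivial ingredient is the uniform-in-$j$, order-$k^2$ spectral gap bound, which I would take as an input from the cited literature; the main conceptual obstacle — bridging the gap between the canonical estimate and the grand-canonical statement of Proposition~\ref{p6} — is handled by the elementary sector decomposition together with the observation that integrating to zero against the whole family $\{\nu_\rho\}$ forces every canonical mean $\phi(j)$ to vanish. Everything else is bookkeeping.
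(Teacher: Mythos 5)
Your proposal is correct. Note that the paper does not prove Proposition \ref{p6} at all: it is stated with the citations \cite{Qua,DS-C} and used as a black box, so there is no internal proof to compare against. Your reduction from the grand-canonical statement to the canonical one is the standard and correct route: the decomposition of both $\int f^2\,d\nu_\rho$ and $\mc D_x(f)$ over the sectors $\{N=j\}$ is legitimate because all the integrands are measurable with respect to the box $\{1,\dots,k\}$ and every swap $\eta\mapsto\eta^{x,x+1}$ with $1\le x\le k-1$ preserves $N$; and the Bernstein-polynomial argument correctly converts the hypothesis ``$\int f\,d\nu_\rho=0$ for all $\rho$'' into ``$\int f\,d\nu_{k,j}=0$ for all $j$,'' which is exactly what is needed to replace $\int f^2\,d\nu_{k,j}$ by the canonical variance. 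This last observation is the same mechanism the paper itself relies on elsewhere (e.g.\ in the centering $\bar g = g - E[g|\eta^k(0)]$ used after Proposition \ref{p6}), so your argument is faithful to how the result is meant to be understood. The one genuinely nontrivial ingredient — the order-$k^2$ spectral gap of the nearest-neighbour exclusion dynamics on $k$ sites, uniform in the particle number — is correctly isolated as an external input and is indeed the content of the cited references. A cosmetic slip: the reference \cite{DS-C} is Diaconis and Saloff-Coste, not Diaconis--Shahshahani.
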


Now we explain how to use this proposition to estimate the norm $\|f\|_{-1,n}$. Let $f \in L^2(\nu_\rho)$ be such that $\int f d\nu_\rho=0$ for any $\rho \in [0,1]$. Assume that the support of $f$ is contained in $A=\{1,\dots,k\}$ and let $g \in L^2(\nu_\rho)$ be arbitrary.  Define $g_A= E[g|\mc F_A]$. Then $\<f,g\>_\rho = \<f,g_A\>_\rho$ and
\begin{align*}
\<g,-L_ng\>_\rho \geq \epsilon_0 n^2 \mc D(g) 
	\geq \epsilon_0 \sum_{x=1}^{k-1} \mc D_x(g) \geq \epsilon_0 n^2\sum_{x=1}^{k-1} \mc D_x(g_A). \\
\end{align*}
 
 Therefore, 
 \[
 \|f\|_{-1,n}^2 \leq  \sup_g\big\{ 2\<f,g\>_\rho -\epsilon_0 n^2 \sum_{x=1}^{k-1} \mc D_x(g)\big\},
 \]
 where now the supremum is over functions $g$ such that $\supp(g) \subseteq A$. We will use the spectral gap inequality \eqref{ec.SG} in order to obtain a lower bound for $\sum_k \mc D_x(g)$. But $g$ does not necessarily satisfies the hypothesis of Proposition~\ref{p6}. Let us define $\bar g = g- E[g|\eta^k(0)]$. Since $\int f d \nu_\rho = 0$ for any $\rho \in [0,1]$, we have $\<f,g\>_\rho =\<f,\bar g\>_\rho$. For $1 \leq x \leq k-1$,  the transformation $\eta \to \eta^{x,x+1}$ does not change the value of $\eta^k(0)$. Therefore, we also have $\mc D_x(g) = \mc D_x(\bar g)$. Now we can use the spectral gap inequality, since  Proposition \ref{p6} applies for $\bar g$. Using Proposition \ref{p6} we obtain that
 \[
 \|f\|_{-1,n}^2 \leq \sup_g\{ 2\<f,g\>_\rho -\frac{\epsilon_0 n^2}{\lambda_0 k^2} \<g,g\>_\rho\},
 \]
 where the supremum is over functions $g$ which are $\mc F_A$-measurable. This last supremum can be computed explicitly and it is equal to $\lambda_0 k^2 \<f,f\>_\rho/\epsilon_0 n^2$. Since the measure $\nu_\rho$ and the Dirichlet form $\mc D(f)$ are translation invariant, the same estimate holds whenever the support of $f$ is contained on an interval of size $k$. More relevant to our purposes is that this estimate is additive in the following sense. Let $f_1, f_2:\Omega \to \bb R$ be such that $\int f_i d\nu_\rho = 0$ for any $\rho \in [0,1]$ and $i=1,2$. Assume that $\supp(f_1) \subseteq \{1,\dots,k\}$ and $\supp(f_2) \subseteq \{k+1,\dots,k+l\}$ with $k,l \in \bb N$. Then
 \begin{align*}
 \sup_g\big\{2\<f_1+f_2,g\>_\rho - \<g,-L_ng\>_\rho\big\} 
 	&\leq \sup_g\big\{2\<f_1,g\>_\rho - \epsilon_0 n^2 \sum_{x=1}^{k-1} \mc D_x(g)\big\}\\
	&+ \sup_g\big\{2\<f_2,g\>_\rho - \epsilon_0 n^2 \sum_{x=k+1}^{k+l-1} \mc D_x(g)\big\},
 \end{align*}
 from where we conclude that
 \[
 \|f_1+f_2\|_{-1,n}^2 \leq \frac{\lambda_0 }{\epsilon_0 n^2} \big\{ k^2\<f_1,f_1\>_\rho + l^2\<f_2,f_2\>_\rho \big\}.
 \]
 
In other words, if the supports of $f_1$ and $f_2$ are contained in disjoint intervals, Kipnis-Varadhan estimate is additive. Let us rewrite this observation as a proposition.

\begin{proposition}
\label{p7}
Let $\{A_i = \{l_i+1,\dots,l_i+k_i\}; i \in I\}$ a collection of disjoint intervals in $\bb Z$. Let $f_i:\Omega \to \bb R$ be such that $\supp(f_i) \subseteq A_i$ and such that $\int f_i d\nu_\rho=0$ for any $\rho \in [0,1]$ and any $i$. There exists a constant $c_0=c_0(\epsilon_0)$ such that
\[
\bb E_n\Big[ \sup_{0 \leq t \leq T} \Big(\int_0^t \sum_{i \in I} f_i(\eta_s^n) ds \Big)^2\Big] 
	\leq \frac{c_0 T}{n^2} \sum_{i \in I} k_i^2 \<f_i,f_i\>_\rho.
\]
\end{proposition}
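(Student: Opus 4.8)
The plan is to reduce everything to the Kipnis--Varadhan inequality (Proposition \ref{p5}) and the spectral gap estimate (Proposition \ref{p6}), essentially by repeating the single-function computation carried out in the paragraphs above this statement while keeping track of the disjointness of the supports. Set $f=\sum_{i\in I}f_i$; there is nothing to prove unless $\sum_{i}k_i^2\<f_i,f_i\>_\rho<\infty$, and since the $A_i$ are disjoint the $f_i$ are orthogonal in $L^2(\nu_\rho)$ (independence under $\nu_\rho$ together with $\int f_i\,d\nu_\rho=0$), so this forces $f\in L^2(\nu_\rho)$ with $\int f\,d\nu_\rho=0$ for all $\rho$. By Proposition \ref{p5} it then suffices to establish $\|f\|_{-1,n}^2 \le \frac{\lambda_0}{\epsilon_0 n^2}\sum_{i\in I}k_i^2\<f_i,f_i\>_\rho$, which gives the claim with $c_0 = 20\lambda_0/\epsilon_0$.

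To bound $\|f\|_{-1,n}^2 = \sup_g\{2\<f,g\>_\rho - \<g,-L_ng\>_\rho\}$ I would first replace $\<g,-L_ng\>_\rho$ by its lower bound $\epsilon_0 n^2\mc D(g)$ from the ellipticity comparison \eqref{ec0.1}, then discard from $\mc D(g)=\sum_{x\in\bb Z}\mc D_x(g)$ every bond not internal to some $A_i$; disjointness of the $A_i$ gives $\mc D(g)\ge\sum_{i}\sum_{x=l_i+1}^{l_i+k_i-1}\mc D_x(g)$. Since $\supp(f_i)\subseteq A_i$ I can write $\<f_i,g\>_\rho = \<f_i,E[g\mid\mc F_{A_i}]\>_\rho$, and the convexity property $\mc D_x(E[g\mid\mc F_{A_i}])\le\mc D_x(g)$ for $x,x+1\in A_i$ lets me pass to $g_i:=E[g\mid\mc F_{A_i}]$ inside each block. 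As $g$ ranges over $L^2(\nu_\rho)$ the functions $g_i$ range freely and independently over $\mc F_{A_i}$-measurable functions, so the variational problem factorizes:
\[
\|f\|_{-1,n}^2 \le \sum_{i\in I}\sup_{g_i}\Big\{2\<f_i,g_i\>_\rho - \epsilon_0 n^2\sum_{x=l_i+1}^{l_i+k_i-1}\mc D_x(g_i)\Big\},
\]
the supremum being over $\mc F_{A_i}$-measurable $g_i$.

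Then I would estimate the $i$-th term. The bonds $l_i+1\le x\le l_i+k_i-1$ preserve the particle number $N_{A_i}=\sum_{x\in A_i}\eta(x)$, and the hypothesis $\int f_i\,d\nu_\rho=0$ for all $\rho$ forces the canonical averages $E[f_i\mid N_{A_i}]$ to vanish (the polynomial $\rho\mapsto\int f_i\,d\nu_\rho$ has these averages as its coefficients in the Bernstein basis). Hence replacing $g_i$ by $g_i - E[g_i\mid N_{A_i}]$ changes neither $\<f_i,g_i\>_\rho$ nor the Dirichlet sum, so I may assume $g_i$ is centered in every canonical ensemble. Proposition \ref{p6}, translated to the interval $A_i$ by translation invariance of $\nu_\rho$ and $\mc D$, then yields $\<g_i,g_i\>_\rho\le\lambda_0 k_i^2\sum_{x=l_i+1}^{l_i+k_i-1}\mc D_x(g_i)$; feeding this into the $i$-th supremum and using Young's inequality $2\<f_i,g_i\>_\rho\le\frac{\lambda_0 k_i^2}{\epsilon_0 n^2}\<f_i,f_i\>_\rho + \frac{\epsilon_0 n^2}{\lambda_0 k_i^2}\<g_i,g_i\>_\rho$ bounds it by $\frac{\lambda_0 k_i^2}{\epsilon_0 n^2}\<f_i,f_i\>_\rho$. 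Summing over $i$ and applying Proposition \ref{p5} completes the argument.

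I do not expect a deep obstruction: the proof is a bookkeeping exercise built on the computation already in the text. The one step deserving care is the decoupling, namely checking that disjointness of the $A_i$ genuinely lets one split both $\mc D(g)$ and $\<f,g\>_\rho$ so that the supremum factorizes, and that the projection estimate $\mc D_x(E[g\mid\mc F_{A_i}])\le\mc D_x(g)$ is invoked only on bonds internal to $A_i$. A secondary point worth spelling out is the equivalence between ``$\int f_i\,d\nu_\rho=0$ for all $\rho$'' and ``$E[f_i\mid N_{A_i}]\equiv 0$'', since the latter is precisely what licenses the centering of $g_i$ before using the spectral gap.
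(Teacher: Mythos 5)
Your argument is correct and follows essentially the same route as the paper: Kipnis--Varadhan (Proposition \ref{p5}) combined with the ellipticity comparison \eqref{ec0.1}, discarding bonds external to the $A_i$, projecting onto $\mc F_{A_i}$, centering with respect to the conserved quantity so that the spectral gap bound of Proposition \ref{p6} applies, and then evaluating the resulting quadratic supremum --- the paper does the single-block case and then notes additivity, while you run the factorized variational problem in one pass, but the ingredients and estimates are identical. Your Bernstein-basis justification of $E[f_i\mid N_{A_i}]\equiv 0$ makes explicit a step the paper leaves implicit; otherwise there is nothing to add.
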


An important final step in the proof of hydrodynamic limits is the so-called {\em equivalence of ensembles}. In order to state this property, we need to introduce some notation. 
For $k \in \bb N$ and $x \in \bb Z$, we define
\[
\eta^k(x) = \frac{1}{k}\sum_{i=1}^k \eta(x+i), \text{ and}
\]
In other words, $\eta^k(x)$ is the density of particles in a box of size $k$ at the right of $x$.
Let $f: \Omega \to \bb R$ be a local function and assume that $\supp(f) \subseteq \{1,\dots,l\}$. For $k >l$ and $m \in \{0,1,\dots,k\}$, we define $\Omega_k = \{0,1\}^{\{1,\dots,k\}}$ and 
\[
\Omega_{k,m} = \big\{\eta \in \Omega_k; \sum_{x=1}^{k} \eta(x) =m\big\}.
\]

We define the measure $\nu_{k,m}$ as the uniform measure in $\Omega_{k,m}$. Notice that $\nu_{k,m}$ is also equal to the Bernoulli measure $\nu_\rho$ restricted to $\Omega_k$ and conditioned to the set $\Omega_{k,m}$.  We define the function
\[
\Psi(k,x) = \int f d\nu_{k,kx} = E[f| \eta^k(0) = x]
\]
for any $x$ of the form $m/k$, where $m \in \{0,1,\dots,k\}$. The function $\Psi(k,x)$ will play a fundamental role in the proof of Theorem \ref{t2}. Recall also the definition $\psi(\rho) = \int f d\nu_\rho$. For a local function $f$ such that its support is not necessarily contained on a set of the form $\{1,\dots,l\}$ there exist some positive numbers $l,x$ such that $\supp(\tau_x f) \in \{1,\dots,l\}$. Then we define $\Psi(k,x) = E[\tau_x f| \eta^k(0)=x]$. 

\begin{proposition}[Equivalence of ensembles]
\label{p8}
Let $f: \Omega \to \bb R$ be a local function. There exists a constant $c_{eq}=c_{eq}(f)$ such that 
\[
\big|\Psi(k,x) - \psi(x) + \frac{x(1-x)}{2k} \psi''(x)\big|Ê\leq \frac{c_{eq}}{k^2}
\]
for any $x \in \{0,1/k,\dots,1\}$ and any $k \in \bb N$.
\end{proposition}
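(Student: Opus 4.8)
The plan is to use that, since the occupation variables are $\{0,1\}$-valued, a local function is a \emph{multilinear} polynomial in finitely many of the $\eta(x)$; the canonical expectation of a monomial is then an explicit ratio of falling factorials, and the proposition reduces to a one-line Taylor expansion of that ratio in $1/k$.

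By translation invariance of $\nu_\rho$ and $\nu_{k,m}$ we may assume $\supp(f) \subseteq \{1,\dots,l\}$ with $l = R(f)$, and write the unique multilinear expansion $f(\eta) = \sum_{S \subseteq \{1,\dots,l\}} c_S \prod_{i\in S}\eta(i)$. Integrating against $\nu_\rho$ gives $\psi(\rho) = \sum_S c_S \rho^{|S|}$, a polynomial of degree $\le l$, whence $\rho\,\psi''(\rho) = \sum_S c_S\,|S|(|S|-1)\,\rho^{|S|-1}$. On the other hand, by exchangeability of $\nu_{k,m}$ the probability that all sites of $S$ are occupied equals $\binom{k-|S|}{m-|S|}/\binom{k}{m} = (m)_{|S|}/(k)_{|S|}$, where $(a)_j := a(a-1)\cdots(a-j+1)$ is the falling factorial, so $\Psi(k,m/k) = \sum_S c_S\,(m)_{|S|}/(k)_{|S|}$.

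The core of the argument is the elementary estimate: for each fixed $j \in \{0,\dots,l\}$, writing $x = m/k$,
\[
\frac{(m)_j}{(k)_j} = \frac{\prod_{i=0}^{j-1}(x-i/k)}{\prod_{i=0}^{j-1}(1-i/k)} = x^j - \frac{j(j-1)}{2k}\,x^{j-1}(1-x) + O(k^{-2}),
\]
the error being bounded by a constant depending only on $l$, \emph{uniformly} in $x\in[0,1]$. I would prove this by expanding numerator and denominator through elementary symmetric polynomials of $\{0,1,\dots,j-1\}$ — both linear-in-$1/k$ terms being governed by $e_1 = j(j-1)/2$ — keeping terms up to order $1/k$ and bounding the remainder using $x^{j-r}\le 1$; the degenerate cases $m<j$ (for which the ratio is identically $0$) cause no trouble for the same reason. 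Summing over $S$ and identifying the first-order coefficient with $\rho\,\psi''$ as above gives
\[
\Psi(k,x) = \psi(x) - \frac{x(1-x)}{2k}\,\psi''(x) + O(k^{-2}),
\]
which is the claim; the finitely many small $k$ (for which a denominator could fail to be positive) are absorbed into $c_{eq}$ via the crude bound $|\Psi|,|\psi| \le \sum_S |c_S|$.

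I do not anticipate a real obstacle: the only delicate point is uniformity of the error down to the boundary $x\in\{0,1\}$ (i.e.\ $m$ of order one), and the multilinear-polynomial representation is exactly what renders this automatic — all quantities are polynomial with coefficients controlled by $l$ — whereas going through the hypergeometric marginal of $\nu_{k,m}$ together with Stirling's formula would make that regime awkward to handle.
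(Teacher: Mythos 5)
Your proposal is correct and follows essentially the same route as the paper's Appendix~\ref{A1}: both reduce to monomials via the multilinear representation of $f$ and exchangeability, express the canonical expectation as the falling-factorial ratio $(m)_j/(k)_j$, and Taylor-expand that ratio to first order in $1/k$, identifying the coefficient $j(j-1)/2$ with the one appearing in $x\psi''(x)$. The only cosmetic difference is that you keep the full sum over monomials and expand the ratio of products directly, whereas the paper normalizes to the single monomial $\eta(1)\cdots\eta(l)$ and factors out the prefactor $a_{k,l}=\prod_{i=0}^{l-1} k/(k-i)$ before expanding.
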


This proposition is classical, and for the sake of completeness we present in the Appendix a simple proof for the particular case of Bernoulli product measures considered here. Notice that there is no mention to any particular density $\rho \in [0,1]$ in this proposition. The following proposition explains what the equivalence of ensembles says for a given fixed density $\rho \in [0,1]$ and for a function $f$ as in Theorem \ref{t2}.

\begin{proposition}
\label{p9}
Let $f$ be a local function and let $\rho \in [0,1]$ be fixed. Assume that $\psi(\rho) = \psi'(\rho)=0$. Then, for any $p \in \bb N$ there exists a constant $c_{eq}=c_{eq}(f,p)$ such that
\[
\int \Psi(k,\eta^k(0))^{2p} \nu_\rho(d\eta) \leq \frac{c_{eq}}{k^{2p}}
\]
and
\[
\int \Big(\Psi(k, \eta^k(0)) -\frac{\psi''(\rho)}{2}\Big\{\big(\eta^k(0)-\rho)^2-\frac{\chi(\rho)}{k}\Big\}\Big)^2 \nu_\rho(d \eta) \leq \frac{c_{eq}}{k^3}.
\]
\end{proposition}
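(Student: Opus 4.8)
The plan is to deduce both estimates from the second-order equivalence of ensembles, Proposition \ref{p8}, together with elementary moment bounds for the hypergeometric-type random variable $\eta^k(0)$ under $\nu_\rho$. First I would record the basic probabilistic facts: under $\nu_\rho$, the variable $\eta^k(0)=\frac1k\sum_{i=1}^k\eta(i)$ has mean $\rho$ and satisfies $\int(\eta^k(0)-\rho)^{2p}\,d\nu_\rho \leq c_p k^{-p}$ for every $p$, since it is an average of $k$ i.i.d.\ Bernoulli$(\rho)$ variables. In particular $\int(\eta^k(0)-\rho)^2\,d\nu_\rho = \chi(\rho)/k$ exactly, which is why the centering term $\chi(\rho)/k$ appears in the statement.

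For the first bound, apply Proposition \ref{p8} with $x=\eta^k(0)$: since $\psi(\rho)=\psi'(\rho)=0$, a Taylor expansion of $\psi$ around $\rho$ gives $\psi(x) = \frac{\psi''(\rho)}{2}(x-\rho)^2 + O(|x-\rho|^3)$, and the $\psi''$-correction term in Proposition \ref{p8} is $\frac{x(1-x)}{2k}\psi''(x) = O(1/k)$. Hence $|\Psi(k,\eta^k(0))| \leq C\big((\eta^k(0)-\rho)^2 + \tfrac1k + \tfrac1{k^2}\big)$ pointwise, uniformly over the values of $\eta^k(0)$. Raising to the power $2p$, using $(a+b+c)^{2p} \leq C(a^{2p}+b^{2p}+c^{2p})$, integrating against $\nu_\rho$, and invoking the moment bound $\int(\eta^k(0)-\rho)^{4p}\,d\nu_\rho \leq c k^{-2p}$ yields $\int\Psi(k,\eta^k(0))^{2p}\,d\nu_\rho \leq c_{eq}k^{-2p}$.

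For the second, finer bound I would write, with $x=\eta^k(0)$,
\[
\Psi(k,x) - \frac{\psi''(\rho)}{2}\Big\{(x-\rho)^2 - \frac{\chi(\rho)}{k}\Big\}
= \Big(\Psi(k,x) - \psi(x) + \frac{x(1-x)}{2k}\psi''(x)\Big) + \Big(\psi(x) - \frac{\psi''(\rho)}{2}(x-\rho)^2\Big) - \frac{x(1-x)}{2k}\psi''(x) + \frac{\psi''(\rho)\chi(\rho)}{2k},
\]
and estimate the $L^2(\nu_\rho)$ norm of each group separately. The first parenthesis is $O(k^{-2})$ uniformly by Proposition \ref{p8}, contributing $O(k^{-4})$ to the square. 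The second is $O(|x-\rho|^3)$ by Taylor with $\psi(\rho)=\psi'(\rho)=0$, so its square integrates to $O(k^{-3})$ by the moment bound. The delicate part is the cancellation between the remaining two terms: write $x(1-x) = \chi(\rho) + (1-2\rho)(x-\rho) - (x-\rho)^2$ and $\psi''(x) = \psi''(\rho) + O(|x-\rho|)$, so that $-\frac{x(1-x)}{2k}\psi''(x) + \frac{\psi''(\rho)\chi(\rho)}{2k} = -\frac{\psi''(\rho)}{2k}(1-2\rho)(x-\rho) + O\big(\tfrac{|x-\rho|+ |x-\rho|^2}{k}\big)$; the leading term here is linear in $x-\rho$ with a $1/k$ prefactor, so its square integrates to $O(k^{-2}\cdot k^{-1}) = O(k^{-3})$, and the error terms are $O(k^{-3})$ as well. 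Collecting all contributions gives the claimed bound $\int(\cdots)^2\,d\nu_\rho \leq c_{eq}k^{-3}$.

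The main obstacle is bookkeeping rather than conceptual: one must track which power of $(x-\rho)$ multiplies which power of $1/k$ and verify that every surviving term, after squaring and integrating, is $O(k^{-3})$ — the only term that could a priori be larger, the $O(1/k)$ constant, is exactly cancelled by the $\chi(\rho)/k$ centering, and the next term, linear in $x-\rho$ over $k$, is saved by the extra factor $k^{-1/2}$ coming from $\int(x-\rho)^2\,d\nu_\rho = \chi(\rho)/k$. No new analytic input beyond Proposition \ref{p8} and Bernoulli moment estimates is needed.
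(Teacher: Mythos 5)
Your proposal is correct and follows essentially the same route as the paper: both deduce the two bounds from Proposition \ref{p8} combined with a Taylor expansion of $\psi$ around $\rho$ (using $\psi(\rho)=\psi'(\rho)=0$) and the Bernoulli moment estimates $\int(\eta^k(0)-\rho)^{2q}\,d\nu_\rho \leq c_q k^{-q}$, the paper simply recording the resulting expansion with bounded remainders $\mc R^1(\rho,x)(x-\rho)^3$ and $\mc R^2(\rho,x)(x-\rho)/k$ where you carry out the bookkeeping explicitly. Your identification of the two key cancellations --- the $\chi(\rho)/k$ centering absorbing the $O(1/k)$ constant, and the linear-in-$(x-\rho)$ term over $k$ contributing only $O(k^{-3})$ after squaring --- is exactly the content of the paper's argument.
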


\begin{proof}
The first inequality is a straight-forward consequence of Proposition~\ref{p8}. The second one follows from Proposition~\ref{p8} and a second-order Taylor expansion of $\psi(x)$ around $x=\rho$. Let  $f$ be a local function and let $\rho \in (0,1)$ be fixed. Then there exist bounded functions $\mc R^i: [0,1]\times [0,1] \to \bb R$, $i=1,2$ such that 
\begin{align*}
\Psi(k,x) 
	&= \psi(\rho) + \psi'(\rho)(x-\rho) +\frac{\psi''(\rho)}{2}\Big\{(x-\rho)^2-\frac{\chi(\rho)}{k}\Big\}\\
	&\quad + \mc R^1(\rho,x)(x-\rho)^3 +\mc R^2(\rho,x)\frac{x-\rho}{k}. 
\end{align*}
\end{proof}
\subsection{The multiscale analysis}
\label{s3.2}

In this section we prove Theorem~\ref{t2}. Roughly speaking, the proof has two steps. First we change the function $\tau_x f$ by a function of $\eta^k(x)$, for a suitable $k=k_0$. We call this step the {\em seed}. This step is analogous to the one-block estimate (see Section 5.4 of \cite{KL}). Then we define a sequence of scales $\{k_i\}_i$ and we inductively change $\eta^{k_i}(x)$ by $\eta^{k_{i+1}}(x)$. This last step is repeated until the final $k_i$ is equal to $\epsilon n$. A careful estimation of the error introduced at each approximation plus a proper choice of the sequence $\{k_i\}_i$ will prove the theorem.

It will be convenient to extend a little the definition of the field $\mc A_t^n(G)$. Let us recall that we are assuming that the local function $f:\Omega \to \bb R$ satisfies $\psi(\rho)=\psi'(\rho)=0$. For ease of notation, we assume that there is $l\in \bb N$ such that $\supp(f) \subseteq \{1,\dots,l\}$. Let us fix $n >l$ for now. 
Let  $G: \bb Z \to \bb R$ be fixed and define $H: \bb Z \to \bb R$ as $H_x = n(G_{x+1}-G_x)$. We assume that 
\begin{equation}
\label{ec11}
\|G\|_{1,n}^2 = \frac{1}{n} \sum_{x \in \bb Z} (H_x)^2 <+\infty.
\end{equation}
For this function $G$ we define $\mc A_t^n(G)$ in the obvious way:
\[
\mc A_t^n(G) = \int_0^t \sum_{x \in \bb Z} \tau_x f(\eta_s^n) H_xds.
\]
Of course, our previous definition corresponds to take $H_x = \nabla^n_x G$. Let $k>l$ be fixed. Later $k$ will be chosen as a function of $n$. Our first task is to introduce an average of the functions $\tau_x f$ over a box of size $k$. As we mentioned before, the variance of the field $\sum_x \tau_x f(\eta) H_x$ grows like $n$. Therefore, even at this preliminary stage we need to use the time evolution of the system.  Let us denote by $k \bb Z$ the sublattice $\{kx;x \in \bb Z\}$. For $x \in k\bb Z$, let us define
\begin{equation}
\label{ec9}
H_x^{k} = \frac{1}{k} \sum_{i=x+1}^{x+k} H_{i}.
\end{equation}

From Cauchy-Schwarz inequality,
\[
(H_x^k)^2 \leq 1/k \sum_{i=1}^k (H_{x+i})^2
\]
and we conclude that for any $k >0$,
\[
\sum_{x \in k \bb Z} (H_x^k)^2 \leq \frac{n}{k} \|G\|_{1,n}^2.
\]
In what follows, we will make repeated use of this elementary inequality, without further mention to it.  Let us define the rest $R_t^{0,n,1}(H)$ as
\begin{equation}
\label{ec4}
\begin{split}
R_t^{0,n,1}(H) 
	&=\mc A_t^n(G) - \int_0^t \sum_{x \in k\bb Z} \sum_{i=x+1}^{x+k} \tau_{i} f(\eta_s^n) H_x^{k} ds \\
	&= \int_0^t  \sum_{x \in k\bb Z} \sum_{i=x+1}^{x+k} \tau_{i} f(\eta_s^n) \big(H_{i}-H_x^{k}\big)ds.
\end{split}
\end{equation}

Notice that for each $x \in k\bb Z$ the sum
\begin{equation}
\label{ec5}
\sum_{i=x+1}^{x+k} \tau_{i} f(\eta) \big(H_i -H_x^{k}\big)
\end{equation}
has mean zero with respect to $\nu_\rho$ for any $\rho \in [0,1]$. Since $k >l$ , for $x,y \in k\bb Z$ such that $|x-y| >k$, the corresponding sums in \eqref{ec5} have supports contained on disjoint intervals of length at most $2k$. Therefore, we can split the sum on the right side of \eqref{ec4} into two pieces, each one of which satisfies the hypothesis of Proposition \ref{p7}. In this way we get the estimate
\begin{align*}
\bb E_n\big[\sup_{0\leq t\leq T} (R_t^{0,n,1})^2\big] 
	&\leq \frac{4c_0 T}{n^2} \sum_{x \in k\bb Z} 4 k^2 
	\int \Big(\sum_{i=x+1}^{x+k}  \tau_i f(\eta)\big(H_i-H_x^{k}\big)\Big)^2\nu_\rho(d\eta) \\
	& \leq \frac{16 c_0 T l k^2 \<f,f\>_\rho}{n^2} \sum_{x \in k \bb Z} \sum_{i=x+1}^{x+k} \big(H_i-H_x^{k}\big)^2\\
	&\leq \frac{C(f)C_{n,k}(G)Tk^2}{n},
\end{align*}
where $C(f)= 16c_0 l \<f,f\>_\rho$ is a constant that only depends on $f$
\footnote{From now on, we do not make explicit the dependence of $f$ of the various constants appearing. Therefore, constants like $C(\rho)$ may also depend on $f$.}
and 
\[
C_{n,k}(G) = \frac{1}{n} \sum_{x \in k\bb Z} \sum_{i=x+1}^{x+k} \big(H_i-H_x^{k}\big)^2.
\]

Notice that $C_{n,k}(G) \leq \|G\|_{1,n}^2$. We conclude that $R_t^{0,n,1}\to 0$ when $n \to \infty$ as soon as $k^2/n \to 0$. Let us define $k_0 = 2k$, where $k = k(n)$ is such that $k^2/n \to 0$ as $n \to \infty$. Up to here, we have written $\mc A_t^n(G)$ as
\begin{equation}
\label{ec6}
 \int_0^t \sum_{x \in k\bb Z} \sum_{i=1}^k \tau_{x+i} f(\eta_s^n) H_x^{k} ds
\end{equation}
plus a rest that vanishes in $L^2(\bb P_n)$ as $n \to \infty$. Since the support of $f$ can be bigger than a single point, the sums over each block of size $k$ do not have disjoint support. But since $k > l$, if we split the sum over $x \in k \bb Z$ into two alternated sums, one over $x \in 2k \bb Z$ and another one over $x+k \in 2k \bb Z$, the corresponding supports will be contained in disjoint intervals of size $2k$. Let us define 
\[
\mc A_t^{n,e} =\int_0^t \sum_{x \in 2k \bb Z} \sum_{i=x+1}^{x+k} \tau_i f(\eta_s^n) H_x^{k}ds,
\]
\[
\mc A_t^{n,o} =\int_0^t \sum_{x\in 2k \bb Z} \sum_{i=x+k+1}^{x+2k} \tau_i f(\eta_s^n) H_{x+k}^{k}ds.
\]

We will concentrate ourselves in $\mc A_t^{n,e}$, the computations for $\mc A_t^{n,o}$ being exactly the same. Notice that the support of the sum $\sum_i \tau_i f(\eta) H_x^{k}$ is contained in the interval $\{x+1,\dots,x+2k\}$. For $x \in \bb Z$ and $k,n \in \bb N$, define 
\[
\eta_s^{n,k}(x) = \frac{1}{k}\sum_{i=1}^k \eta_s^n(x+i).
\]
Let us recall the definition of $\Psi(k,x)$ and let us write
\[
\mc A_t^{n,e}
	= R_t^{0,n,e} + \int_0^t \sum_{x \in 2k \bb Z} k \Psi(2k; \eta_s^{n,2k}(x)) H_x^{k}ds,
\]
where
\[
R_t^{0,n,e} =  \int_0^t \sum_{x \in 2k \bb Z} \Big\{\sum_{i=x+1}^{x+k} \tau_i f(\eta_s^n) - k \Psi(2k,\eta_s^{n,2k}(x)) \Big\}H_x^{k}.
\]
The error term $R_t^{0,n,o}$ is defined in a similar way. Notice that $k\Psi(2k,\eta^{2k}(x))$ is equal to the conditional expectation of $\sum_i \tau_{x+i} f(\eta)$ on the corresponding box of size $2k$.
Therefore, in this integral each term of the sum over $2k \bb Z$ has mean zero with respect to each measure $\nu_\rho$ and we can use Proposition \ref{p7} to estimate the variance of $R_t^{0,n,e}$. Repeating the computations done to compute the variance of $R_t^{0,n,1}$ we see that
\[
\bb E_n\big[\sup_{0\leq t \leq T} (R_t^{0,n,e})^2\big] \leq \frac{4c_0 T l k^3 \<f,f\>_\rho}{n^2} \sum_{x \in 2k \bb Z} \big(H_x^{k}\big)^2 \leq \frac{C(\rho)\|G\|_{1,n}^2Tk^2}{n}.
\]
A similar estimate holds for $R_t^{0,n,o}(H)$. Let us define $R_t^{0,n} = R_t^{0,n,1}+R_t^{0,n,e}+R_t^{0,n,o}$. Recall the choice $k_0 =2k$. Putting the three estimates together and observing that for any $x \in 2k \bb Z$, $H_x^{k}+H_{x+k}^{k} = 2 H_x^{2k}$, we have proved that 
\[
\mc A_t^n(G) = \int_0^t \sum_{x \in k_0 \bb Z} \Psi(k_0,\eta_s^{n,k_0}(x)) H_x^{k_0} ds + R_t^{0,n}(H),\]
where
\begin{equation}
\label{ec7}
\bb E_n \big[\sup_{0\leq t \leq T} (R_t^{0,n})^2\big] \leq \frac{C(\rho) \|G\|_{1,n}^2 T k^2}{n}.
\end{equation}

This decomposition is what we call the {\em seed}. What this decomposition is telling us, is that we can replace the weighted averages of the functions $\tau_x f$ by a function of the density of particles on a ``not-too-big'' block of size $k_0$. The informed reader is invited to notice the parallel between this decomposition and the one-block estimate introduced in \cite{GPV}.

Now we are ready to start the multiscale argument. For $k \in \bb N$, let us define 
\[
\mc A_t^{n,k}(G) = \int_0^t \sum_{x\in k\bb Z} k \Psi(k,\eta_s^{n,k}(x)) H_x^{k} ds.
\]
We have the following estimate:

\begin{theorem}[Iterative bound]
\label{t3}
For any $k \in \bb N$ and any $H$ bounded and of compact support,
\[
\bb E_n\Big[\sup_{0\leq t \leq T} \big(\mc A_t^{n,2k}(G) -\mc A_t^{n,k}(G)\big)^2\Big] \leq 
	\frac{C(\rho) \|G\|_{1,n}^2 kT}{n}.
\]
\end{theorem}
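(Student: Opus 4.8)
The plan is to compare $\mc A_t^{n,2k}(G)$ and $\mc A_t^{n,k}(G)$ by decomposing each block of size $2k$ into its two halves of size $k$. Fix a point $x\in 2k\bb Z$. The difference between the two fields, restricted to this block, is, after reindexing,
\[
k\,\Psi(2k,\eta_s^{n,2k}(x))\,H_x^{2k}
-k\,\Psi(k,\eta_s^{n,k}(x))\,H_x^{k}
-k\,\Psi(k,\eta_s^{n,k}(x+k))\,H_{x+k}^{k}.
\]
Using $H_x^k+H_{x+k}^k=2H_x^{2k}$, I would split $k\,\Psi(2k,\eta^{2k}(x))\,2H_x^{2k}$ symmetrically into a piece weighted by $H_x^k$ and a piece weighted by $H_{x+k}^k$, so that the whole difference becomes a sum over $x\in 2k\bb Z$ of terms
\[
\tfrac{k}{2}\bigl(\Psi(2k,\eta_s^{n,2k}(x))-\Psi(k,\eta_s^{n,k}(x))\bigr)H_x^k
+\tfrac{k}{2}\bigl(\Psi(2k,\eta_s^{n,2k}(x))-\Psi(k,\eta_s^{n,k}(x+k))\bigr)H_{x+k}^k.
\]
Each of these has support contained in $\{x+1,\dots,x+2k\}$; since these intervals are disjoint as $x$ ranges over $2k\bb Z$, the whole thing is of the form handled by Proposition~\ref{p7} — provided each summand has mean zero under every $\nu_\rho$. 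The mean-zero property is exactly the point: $\Psi(2k,\cdot)$ evaluated at $\eta^{2k}(x)$ is the conditional expectation of $\Psi(k,\cdot)$ at $\eta^k(x)$ given the coarser $\sigma$-algebra $\mc F_{\{x+1,\dots,x+2k\}}$ of total occupation, so $E[\Psi(k,\eta^k(x))-\Psi(2k,\eta^{2k}(x))\mid\mc F_{\{x+1,\dots,x+2k\}}]=0$, hence the difference has mean zero under $\nu_{2k,m}$ for every $m$, and therefore under every $\nu_\rho$.

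Applying Proposition~\ref{p7} with $k_i=2k$ for each block, I get
\[
\bb E_n\Big[\sup_{0\leq t\leq T}\bigl(\mc A_t^{n,2k}(G)-\mc A_t^{n,k}(G)\bigr)^2\Big]
\leq \frac{c_0 T}{n^2}\sum_{x\in 2k\bb Z}(2k)^2\,\frac{k^2}{4}\,\Bigl(\bigl(H_x^k\bigr)^2+\bigl(H_{x+k}^k\bigr)^2\Bigr)\,
\bigl\|\Psi(k,\eta^k(\cdot))-\Psi(2k,\eta^{2k}(\cdot))\bigr\|_{L^2(\nu_\rho)}^2 .
\]
The sum $\sum_x\bigl((H_x^k)^2+(H_{x+k}^k)^2\bigr)=\sum_{x\in k\bb Z}(H_x^k)^2\leq (n/k)\|G\|_{1,n}^2$, so the bound reads
\[
C\,\frac{T}{n^2}\,k^4\cdot\frac{n}{k}\,\|G\|_{1,n}^2\cdot
\bigl\|\Psi(k,\eta^k(\cdot))-\Psi(2k,\eta^{2k}(\cdot))\bigr\|_{L^2(\nu_\rho)}^2
=C\,\frac{T k^3}{n}\,\|G\|_{1,n}^2\,
\bigl\|\cdots\bigr\|_{L^2(\nu_\rho)}^2 .
\]
To land on the asserted bound $C(\rho)\|G\|_{1,n}^2 kT/n$, I need the $L^2(\nu_\rho)$-norm of $\Psi(k,\eta^k(0))-\Psi(2k,\eta^{2k}(0))$ to be of order $1/k$. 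This is where Proposition~\ref{p9} enters: by the triangle inequality it suffices to control $\Psi(k,\eta^k(0))-\tfrac{\psi''(\rho)}{2}\{(\eta^k(0)-\rho)^2-\chi(\rho)/k\}$ and the analogous quantity at scale $2k$, each of which has $L^2(\nu_\rho)$-norm $O(k^{-3/2})$, together with the difference of the two quadratic surrogates, $\tfrac{\psi''(\rho)}{2}\{(\eta^k(0)-\rho)^2-(\eta^{2k}(0)-\rho)^2\}-\tfrac{\psi''(\rho)}{2}\chi(\rho)(1/k-1/(2k))$; the latter has mean zero under $\nu_\rho$ and, since $\eta^k(0)$ and $\eta^{2k}(0)$ are block averages differing by an average over a block of size $k$ independent fluctuations, variance of order $1/k$, i.e. $L^2$-norm $O(k^{-1})$. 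Thus the dominant contribution is $O(k^{-1})$, giving the claimed estimate.

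The main obstacle is the last step: getting the $1/k$ rate for $\|\Psi(k,\eta^k(0))-\Psi(2k,\eta^{2k}(0))\|_{L^2(\nu_\rho)}$ cleanly. One must be careful that the $O(k^{-3/2})$ error pieces in Proposition~\ref{p9} really are dominated by the $O(k^{-1})$ quadratic-surrogate difference and not the other way around, and that the constants do not secretly depend on $n$. Everything else — the halving decomposition, the disjoint-support bookkeeping, the mean-zero verification via the tower property for $\nu_{2k,m}$, and the application of Proposition~\ref{p7} — is routine and parallels the computation already carried out for $R_t^{0,n,1}$ and $R_t^{0,n,e}$ in the seed step.
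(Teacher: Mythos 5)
Your proposal is correct and follows essentially the same route as the paper: the same halving of each $2k$-block via $H_x^k+H_{x+k}^k=2H_x^{2k}$, the same verification that the per-block difference has conditional mean zero given $\eta^{2k}(x)$ (hence mean zero under every $\nu_\alpha$), and the same application of Proposition~\ref{p7} to disjointly supported blocks of length $2k$. The one place you diverge is the final static estimate, and there you make the step harder than it is: the bound $\|\Psi(k,\eta^k(0))-\Psi(2k,\eta^{2k}(0))\|_{L^2(\nu_\rho)}=O(1/k)$ that you flag as the ``main obstacle'' requires no cancellation between the two scales at all --- the first inequality of Proposition~\ref{p9} with $p=1$ already gives $\|\Psi(k,\eta^k(0))\|_{L^2(\nu_\rho)}\leq \sqrt{c_{eq}}/k$ at each scale separately, so the triangle inequality suffices, which is exactly how the paper bounds $\<F_x,F_x\>_\rho$ by $c_{eq}\{(H_x^k)^2+(H_{x+k}^k)^2\}$. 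Your detour through the second inequality of Proposition~\ref{p9} and the quadratic surrogates does also work (the two error pieces are $O(k^{-3/2})$ and the surrogate difference is $O(k^{-1})$ in $L^2$, though note its \emph{variance} is of order $k^{-2}$, not $k^{-1}$ as you wrote), but it is unnecessary; apart from this and some harmless factor-of-$2$ bookkeeping, the argument is sound.
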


\begin{proof}
Notice that for any $x \in 2k \bb Z$,
\[
E[\Psi(k,\eta^k(x))|\eta^{2k}(x)] = E[\Psi(k,\eta^k(x+k))|\eta^{2k}(x)] = \frac{1}{2} \Psi(2k,\eta^{2k}(x)).
\]
This is evident from the definition of $\Psi(k,\eta^k(0))$ as the conditional expectation of $f$ with respect to $\eta^k(0)$. Notice as well that $H_x^{k}+H_{x+k}^{k}=2H_x^{2k}$. Therefore, we can write
\begin{align*}
2k \Psi(2k,\eta^{2k}(x)) H_x^{2k} 
	&= E\big[k \Psi(k,\eta^{k}(x)) H_x^{k}\\
	&\quad +k \Psi(k,\eta^{k}(x+k)) H_{x+k}^{k}\big|\eta^{2k}(x)\big].
\end{align*}

We conclude that the functions
\[
F_x(\eta) = k \Psi(k,\eta^{k}(x)) H_x^{k}+k \Psi(k,\eta^{k}(x+k)) H_{x+k}^{k}-2k \Psi(2k,\eta^{2k}(x)) H_x^{2k}
\]
have mean zero with respect to each measure $\nu_\rho$. The support of the functions $F_x$ is contained on the interval $\{x+1,\dots,x+2k\}$. Therefore, the functions $F_x$ satisfy the hypothesis of Proposition~\ref{p7}. By the definition of $\psi(k,\eta^k(x))$ as a conditional expectation and by Proposition \ref{p9}, we see that 
\[
\<F_x,F_x\>_\rho \leq  c_{eq} \{(H_x^{k})^2+(H_{x+k}^{k})^2\}.
\]
Since $\mc A_t^{n,2k}(G)-\mc A_t^{n,k}(G)= \int_0^t \sum_x F_x(\eta_s^n)ds$, using Proposition~\ref{p7} we obtain the desired bound.
\end{proof}

For $i \in \bb N$ we define $k_i = 2^ik_0$. To avoid overcharged notation we write $\mc A_t^{n,i} = \mc A_t^{n,k_i}(G)$. By Theorem \ref{t3}, we have 
\[
\bb E_n\Big[\sup_{0\leq t \leq T}Ê\big(\mc A_t^{n,i+1} - \mc A_t^{n,i}\big)^2\Big] 
	\leq \frac {C(\rho) \|G\|_{1,n}^2 T 2^i k_0}{n}.
\]
Therefore, writing 
\[
\mc A_t^{n,m} -\mc A_t^{n,0} = \sum_{i=0}^{m-1} \big\{\mc A_t^{n,i+1} -\mc A_t^{n,i}\big\}
\]
and using Minkowski's inequality we see that
\begin{align*}
\bb E_n\Big[\sup_{0\leq t \leq T} \big(\mc A_t^{n,m} -\mc A_t^{n,0}\big)^2\Big]
	&\leq \Big( \sum_{i=0}^{m-1} \bb E_n \Big[\sup_{0 \leq t \leq T} \big( \mc A_t^{n,i+1} -\mc A_t^{n,i} \big)^2\Big]^{1/2} \Big)^2\\
	&\leq \Bigg(\sum_{i=0}^{m-1} \sqrt{\frac{C(\rho)\|G\|_{1,n}^2T k_0 2^i}{n}}\Bigg)^2\\
	&\leq \frac{C(\rho) \|G\|_{1,n}^2T k_0 2^m}{n(\sqrt 2 -1)^2}
	\leq \frac{C(\rho) \|G\|_{1,n}^2Tk_m }{n}.
\end{align*}

Notice that this last estimate only depends on $k_m$ and not on $k_0$.
Fix $\epsilon >0$ and take $m = \log(\epsilon n/k_0)$. Recall the estimate \eqref{ec7}. Choosing $k_0 = \sqrt{\epsilon n}$ and putting these two estimates together, we conclude that
\begin{equation}
\label{ec8}
\bb E_n\Big[\sup_{0 \leq t \leq T}Ê\big(\mc A_t^n(G) - \mc A_t^{n,\epsilon n}(G)\big)^2\Big] 
	\leq C(\rho) \|G\|_{1,n}^2 T \epsilon.
\end{equation}

This estimate basically finishes the proof of Theorem~\ref{t2}. Recall the definition of $\mc A_t^{n,\epsilon n}(G)$:
\[
\mc A_t^{n,\epsilon n}(G) 
	=\int_0^t \sum_{x\in \epsilon n\bb Z} \epsilon n \Psi(\epsilon n, \eta_s^{n,\epsilon n}(x)) H_x^{\epsilon n} ds.
\]
Let us define
\[
R_x^{n,\epsilon}(\eta) = \Psi(\epsilon n,\eta^{\epsilon n}(x))-\frac{\psi''(\rho)}{2}\Big\{\big(\eta^{\epsilon n}(x)-\rho\big)^2 -\frac{\chi(\rho)}{\epsilon n}\Big\}.
\]
We can rewrite $\mc A_t^{n,\epsilon n}(G)$ as
\begin{align*}
\mc A_t^{n,\epsilon n}(G) 
	&= \frac{\psi''(\rho)}{2} \int_0^t \sum_{x \in \epsilon n \bb Z} \epsilon n \Big\{\big( \eta_s^{n,\epsilon n}(x)-\rho\big)^2 - \frac{\chi(\rho)}{\epsilon n}\Big\}H_x^{\epsilon n} ds \\
	&+ \int_0^t \sum_{x \in \epsilon n \bb Z}Ê\epsilon n R_x^{n,\epsilon}(\eta_s^n) H_x^{\epsilon n} ds.
\end{align*}
Notice that the constant $\chi(\rho)/\epsilon n$ is not needed when $\sum_x H_x=0$.
By Proposition \ref{p9} we have 
\begin{equation}
\label{ec10}
\int \Big(  \sum_{x \in \epsilon n \bb Z}Ê\epsilon n R_x^{n,\epsilon}(\eta) H_x^{\epsilon n} \Big)^2\nu_\rho(d \eta) 
	\leq  \frac{c_{eq}}{\epsilon n} \sum_{x \in \epsilon n \bb Z} (H_x^{\epsilon n})^2 
	\leq \frac{c_{eq}\|G\|_{1,n}^2}{\epsilon^2 n}.
\end{equation}
Notice that
\[
\mc Y_t^n(i_\epsilon(x/n))^2 = n\big(\eta_t^{n,\epsilon n}(x)-\rho\big)^2.
\]
Putting estimate \eqref{ec10} together with estimate \eqref{ec8} and choosing $H_x = \nabla_x^n G$, we conclude that
\begin{multline*}
\bb E_n\Big[\Big(\mc A_t^n(G) -\frac{\psi''(\rho)}{2}\int_0^t \sum_{x \in \bb \epsilon Z} \big(G(x+\epsilon)-G(x)\big)\mc Y_s^n(i_\epsilon( x))^2 ds\Big)^2\Big]\leq \\
	\leq C(\rho) \|G\|_{1,n}^2\big\{ T \epsilon + \frac{T^2}{\epsilon^2 n}\big\},
\end{multline*}
which proves Theorem~\ref{t2}. Here we make two remarks about this result.  First, we did not only obtain the convergence result stated in Theorem \ref{t2}, but we also obtained a good control on the rate of convergence. This point will be important in Section \ref{s2.2}, more precisely to prove that limit points of $\mc Y_t^n$ will be energy solutions of equation \eqref{ec.KPZ}. And second, this estimate shows that Theorem \ref{t2} holds in general for any function $G: \bb R \to \bb R$ such that  $\|G\|_{1,n}^2$ is finite for any $n$ and uniformly bounded in $n$.

\section{Proof of the equilibrium fluctuations}
\label{s2}
The proof of Theorem~\ref{t1} is based on the study of some martingales associated to the process $\mc Y_t^n$. Let $F: \Omega \to \bb R$ be a function on the domain of the generator $L_n$. Dynkin's formula says that the process
\[
M_t^{F,n} = F(\eta_t^n) -F(\eta_0^n) -\int_0^t L_n F(\eta_s^n) ds
\]
is a martingale with respect to the natural filtration associated to the process $\eta_t^n$. If $F^2$ also belongs to the domain of the generator $L_n$, the quadratic variation of $M_t^{F,n}$ is given by
\[
\<M_t^{F,n}\> = \int_0^t \big\{ÊL_n F(\eta_s^n)^2 - 2 F(\eta_s^n) L_n F(\eta_s^n)\big\}ds.
\]
We will use this formula for $F(\eta_t^n) = \mc Y_t^n(G)$ for $G \in \mc S(\bb R)$. After some calculations, we see that
\begin{equation}
\label{ec.M1}
\begin{split}
M_t^n(G)
	&= \mc Y_t^n(G) -\mc Y_0^n(G) -\int_0^t \!\!\frac{1}{2\sqrt n} \sum_{x \in \bb Z}Ê\tau_x h(\eta_s^n) \Delta_x^nG ds - \int_0^t \! \sum_{x \in \bb Z}Ê\tau_x f(\eta_s^n) \nabla_x^n Gds
\end{split}
\end{equation}
is a martingale, where $f(\eta) = \frac{1}{2}a c(\eta) (\eta(1)-\eta(0))^2$ and
\[
\Delta_x^n G = n^2\Big\{G\Big(\frac{x+1}{n}\Big) +G\Big(\frac{x-1}{n}\Big)-2G\Big(\frac{x}{n}\Big)\Big\}
\]
is a discrete approximation of $G''(x/n)$. The quadratic variation of $M_t^{n}(G)$ is given by
\[
\<M_t^{n}(G)\> = \int_0^t \frac{1}{n} \sum_{x \in \bb Z}Ê\tau_x g_n(\eta_s^n) \big(\nabla_x^n G\big)^2ds,
\]
where $g_n(\eta) = \{p_n \eta(0)(1-\eta(1))+q_n\eta(1)(1-\eta(0))\}c(\eta)$. Looking at formula \eqref{ec.M1}, we can write $\mc Y_t^n(G)$ as the sum of four terms: the initial value $\mc Y_0^n(G)$, the martingale $M_t^n(G)$ and two integral terms. Starting from this decomposition, we will prove in Sect.~\ref{s2.1} that the sequence of processes $\{\{\mc Y_t^n; t \geq 0\}; n \in \bb N\}$ is tight with respect to the uniform topology in $\mc D([0,\infty), \mc S'(\bb R))$; and then in Sect.~\ref{s2.2} we will 
prove that any limit point of the sequence $\{\{\mc Y_t^n; t \geq 0\}; n \in \bb N\}$ is a weak solution of the KPZ equation.

\subsection{Tightness of the density field}
\label{s2.1}
In this section we prove tightness of the sequence  $\{\{\mc Y_t^n; t \geq 0\}; n \in \bb N\}$. As usual, to avoid uninteresting topology issues, we fix $T>0$ and we consider the processes $\mc Y_t^n(G)$ restricted to the interval $[0,T]$. We will use Mitoma's criterion \cite{Mit}, which now we describe. Let $X$ be a complete, separable metric space with metric $d: X \times X \to [0,\infty)$ and let $\{P_n;n \in \bb N\}$ be a sequence of probability measures in $\mc D([0,T], X)$. We say that $\{P_n; n\in \bb N\}$ is {\em $\mc C$-tight} if $\{P_n;n \in \bb N\}$ is tight with respect to the uniform measure in $\mc D([0,T],X)$. Let $P$ be a probability measure on $\mc D([0,T],\mc S'(\bb R))$. For $G \in \mc S(\bb R)$ we denote by $P^G$ the probability measure in $\mc D([0,T],\bb R)$ defined by
\[
P^G(A) = P(x_\cdot(G) \in A),
\] 
where $x_\cdot(G) \in \mc D([0,T],\bb R)$ is given by $x_\cdot(G) = \{x_t(G); t \in [0,T]\}$. Mitoma's criterion says the following:
\begin{proposition}
\label{p10}
Let $\{P_n;n \in \bb N\}$ be a sequence of probability measures in $\mc D([0,T],\mc S'(\bb R))$. The sequence $\{P_n; n \in \bb N\}$ is $\mc C$-tight if and only if $\{P_n^G; n\in \bb N\}$ is $\mc C$-tight for any $G \in \mc S(\bb R)$.
\end{proposition}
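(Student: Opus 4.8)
The plan is to prove the two implications separately. The forward direction --- $\mc C$-tightness of $\{P_n\}$ implies $\mc C$-tightness of each $\{P_n^G\}$ --- is immediate: for fixed $G \in \mc S(\bb R)$ the evaluation $Y \mapsto Y(G)$ is a continuous linear functional on $\mc S'(\bb R)$, so it induces a continuous map $\Pi_G : \mc D([0,T],\mc S'(\bb R)) \to \mc D([0,T],\bb R)$ between the two path spaces equipped with the uniform topology, and $\Pi_G$ sends $\mc C([0,T],\mc S'(\bb R))$ into $\mc C([0,T],\bb R)$. Since $P_n^G = P_n \circ \Pi_G^{-1}$ and the continuous image of a compact set is compact, any compact set witnessing $\mc C$-tightness of $\{P_n\}$ produces one witnessing $\mc C$-tightness of $\{P_n^G\}$. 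The converse is the content of Mitoma's theorem \cite{Mit}, which I would establish as follows.

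First I would invoke the nuclear Fr\'echet structure of $\mc S(\bb R)$. Let $\{h_j\}_{j \ge 0}$ be the Hermite functions, an orthonormal basis of $L^2(\bb R)$ diagonalizing the harmonic oscillator; for $k \ge 0$ put $\|Y\|_{-k}^2 = \sum_{j \ge 0}(1+j)^{-2k}\,Y(h_j)^2$ and let $\mc H_{-k}$ be the corresponding separable Hilbert space of distributions, so that $\mc S'(\bb R) = \bigcup_{k \ge 0} \mc H_{-k}$, each inclusion $\mc H_{-(k-1)} \hookrightarrow \mc H_{-k}$ is Hilbert--Schmidt (its Hilbert--Schmidt norm squared equals $\sum_j (1+j)^{-2} < \infty$), hence bounded balls of $\mc H_{-(k-1)}$ are relatively compact in $\mc H_{-k}$, and each $\mc H_{-k}$ embeds continuously into $\mc S'(\bb R)$. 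The aim is to show that, for $k$ large enough, $\{P_n\}$ is already $\mc C$-tight in the Polish space $\mc D([0,T],\mc H_{-k})$; pushing forward along $\mc H_{-k} \hookrightarrow \mc S'(\bb R)$ then yields $\mc C$-tightness in $\mc D([0,T],\mc S'(\bb R))$. For a Polish-space-valued c\`adl\`ag process, $\mc C$-tightness is equivalent to: (i) for each $t \in [0,T]$ the laws $\{P_n \circ \pi_t^{-1}\}_n$ are tight in $\mc H_{-k}$; and (ii) the uniform continuity-modulus bound $\lim_{\delta \to 0} \limsup_n P_n\big( \sup_{|s-t| < \delta} \|x_t - x_s\|_{-k} > \varepsilon \big) = 0$ for every $\varepsilon > 0$, which moreover forces every limit point onto $\mc C([0,T],\mc H_{-k})$.

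I would extract (i) and (ii) from the hypothesis one Hermite mode at a time, weighted by the summable factors $(1+j)^{-2k}$. The hypothesis gives that each $\{P_n^{h_j}\}$ is $\mc C$-tight in $\mc D([0,T],\bb R)$; in particular, for each $j$ the laws of $\sup_{t \le T}|x_t(h_j)|$ are tight uniformly in $n$, i.e. all the scalar marginals are uniformly tight over $(n,t)$ simultaneously. Mitoma's key lemma --- proved via a Banach--Steinhaus argument in the barrelled space $\mc S(\bb R)$, which yields equicontinuity at $0$ of the family of characteristic functionals, together with the Hilbert--Schmidt structure of nuclearity, which converts this into concentration on a ball of a fixed negative Sobolev space --- upgrades this coordinatewise control to uniform tightness of the marginals in some fixed $\mc H_{-(k-1)}$: for every $\varepsilon$ there is $M_\varepsilon$ with $\sup_n \sup_{t \le T} P_n(\|x_t\|_{-(k-1)} > M_\varepsilon) \le \varepsilon$; the Hilbert--Schmidt embedding then gives (i). For (ii) I would split $\|x_t - x_s\|_{-k}^2$ at a mode $N$: the low modes contribute at most $C_k \max_{j \le N}\big(\sup_{|s-t|<\delta}|x_t(h_j) - x_s(h_j)|\big)^2$, which is small by the one-dimensional moduli, while the high modes contribute at most $4(1+N)^{-2}\sup_{u \le T}\|x_u\|_{-(k-1)}^2$, using $\sum_{j > N}(1+j)^{-2k}z_j^2 \le (1+N)^{-2}\sum_j (1+j)^{-2(k-1)}z_j^2$; this last term is made small by taking $N$ large, using the bound on $\sup_u \|x_u\|_{-(k-1)}$ supplied by step (i) --- in the stationary setting of Theorem~\ref{t1} one may alternatively control $\bb E_n[\sup_u \|x_u\|_{-(k-1)}^2]$ directly, since each $\bb E_n[\mc Y_s^n(h_j)^2]$ is bounded uniformly in $n$ and $s$. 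Feeding (i) and (ii) into the classical tightness criterion for $\mc D([0,T],\mc H_{-k})$ (for instance Aldous's criterion) finishes the proof.

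The hard part is entirely the passage, inside step (i), from tightness of each scalar projection $x_t(h_j)$ to tightness of the $\mc S'(\bb R)$-valued marginals in a single negative Sobolev space --- this is precisely Mitoma's theorem and genuinely uses the nuclearity of $\mc S(\bb R)$: barrelledness for the Banach--Steinhaus step that produces a common continuity neighbourhood, and the Hilbert--Schmidt summability of the Hermite embeddings that lets one close a union bound over the infinitely many modes. The remaining pieces --- the reduction to a Polish Hilbert space, the mode-by-mode control of the modulus in (ii), and the standard tightness criterion in $\mc D([0,T],\mc H_{-k})$ --- are routine once this point is available. Since the statement as given is the specialization of \cite{Mit} to $\Phi = \mc S(\bb R)$, in practice one simply cites that reference.
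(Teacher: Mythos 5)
Your proposal is correct, but it does considerably more than the paper, which offers no proof at all: Proposition \ref{p10} is simply quoted from Mitoma's article \cite{Mit} and used as a black box. Your forward direction (continuity of the evaluation maps $\Pi_G$ in the uniform topology) is exactly right, and your reconstruction of the converse is the standard proof of Mitoma's theorem: reduce to a negative Hermite--Sobolev space $\mc H_{-k}$, use barrelledness/Banach--Steinhaus plus the Hilbert--Schmidt embeddings $\mc H_{-(k-1)} \hookrightarrow \mc H_{-k}$ to upgrade coordinatewise tightness to tightness in a single Polish Hilbert space, then split the modulus of continuity at a mode $N$. The one point to present more carefully is that your step (ii) needs tightness of the laws of $\sup_{t \le T}\|x_t\|_{-(k-1)}$, which is stronger than the fixed-$t$ marginal tightness you state in (i); this is not a real gap, because Mitoma's key lemma is applied to the seminorms built from $G \mapsto \sup_{t\le T}|x_t(G)|$ (whose one-dimensional tightness you do record), and that version delivers the uniform-in-time control directly. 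In short: the paper cites, you prove; your argument is the standard one and is sound, and for the purposes of this paper the citation of \cite{Mit} suffices.
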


As a consequence of this criterion, in order to prove $\mc C$-tightness of the sequence $\{\{\mc Y_t^n; t \in [0,T]\};n \in \bb N\}$, it is enough to prove $\mc C$-tightness of the sequence  $\{\{\mc Y_t^n(G); t \in [0,T]\};n \in \bb N\}$ for any function $G \in \mc S(\bb R)$. It is enough to prove tightness for each one of the four terms appearing in \eqref{ec.M1}. First notice that for any $\theta \in \bb R$,
\begin{align*}
\bb E_n\big[ \exp\{ i \theta \mc Y_t^n(G)\}]
	&= \prod_{x \in \bb Z}Ê\bb E_n\big[ \exp\big\{\frac{i \theta}{\sqrt n} (\eta_0^n(x)-\rho)G(x/n\big\}\big]\\
	&=\prod_{x \in \bb Z} \Big\{Ê1-\frac{\theta^2}{2n}\chi(\rho)G(x/n)^2 +\frac{R_x^n}{6n^{3/2}}\Big\},
\end{align*}
where $|R_x^n| \leq |G(x/n)|^3$. Therefore, $\mc Y_0^n(G)$ conveges in distribution to a normal random variable with mean zero and variance $\chi(\rho)\int G(x)^2dx$. We conclude that the sequence $\{\mc Y_0^n(G);n \in \bb N\}$ is tight (since it is convergent). The following proposition says that we do not need to prove $\mc C$-tightness: it is enough to prove tightness with respect to the $J_1$-Skorohod topology of $\mc D([0,T],\bb R)$.

\begin{proposition}[\cite{FPV}]
\label{p11}
Let $\{P_n; n \in \bb N\}$ be a sequence of probability measures on $\mc D([0,T],X)$. assume that
\begin{itemize}
\item[i)] The sequence $\{P_n; n \in \bb N\}$ is tight with respect to the $J_1$-Skorohod topology of $\mc D([0,T],X)$,

\item[ii)] For any $A>0$,
\[
\lim_{n \to \infty} P_n\big(\sup_{0 \leq t \leq T} |x(t)-x(t-)| \geq A\big) =0.
\]
\end{itemize}
Then $\{P_n; n \in \bb N\}$ is $\mc C$-tight.
\end{proposition}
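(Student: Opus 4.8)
The plan is to derive $\mc C$-tightness from the two hypotheses using the classical moduli-of-continuity characterizations of tightness in $\mc D([0,T],X)$. For $x\in\mc D([0,T],X)$ and $\delta>0$ I will work with the uniform modulus
\[
w_x(\delta)=\sup_{\substack{s,t\in[0,T]\\|s-t|\le\delta}} d(x(s),x(t)),
\]
the Skorohod modulus $w_x'(\delta)$, defined as the infimum of $\max_i \sup_{s,t\in[t_{i-1},t_i)} d(x(s),x(t))$ over partitions $0=t_0<\dots<t_k=T$ whose interior blocks all have length larger than $\delta$, and the maximal jump $J(x)=\sup_{0<t\le T} d(x(t),x(t-))$. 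Recall that $\{P_n\}$ is tight for the $J_1$-Skorohod topology iff it satisfies a compact containment condition and $\lim_{\delta\to0}\limsup_n P_n(w_x'(\delta)>\epsilon)=0$ for all $\epsilon>0$, whereas $\mc C$-tightness is equivalent to the same compact containment condition together with $\lim_{\delta\to0}\limsup_n P_n(w_x(\delta)>\epsilon)=0$ for all $\epsilon>0$ (control of the uniform modulus also automatically promotes pointwise tightness of $\{x(t)\}$ from a dense set of times to every $t\in[0,T]$, using that closed enlargements of compacts are compact in the complete space $X$).

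The heart of the argument is the deterministic inequality
\[
w_x(\delta)\le 2\,w_x'(\delta)+J(x),\qquad x\in\mc D([0,T],X),\ \delta>0.
\]
I would prove it by fixing $\epsilon'>0$, choosing a near-optimal partition for $w_x'(\delta)$ so that every block has oscillation at most $w_x'(\delta)+\epsilon'$, and then observing that any two times $s,t$ with $|s-t|\le\delta$ must lie in the same block or in two consecutive blocks, since the interior blocks are longer than $\delta$. In the latter case, writing $s\in[t_{i-1},t_i)$ and $t\in[t_i,t_{i+1})$, one bounds $d(x(s),x(t))$ by $d(x(s),x(t_i-))+d(x(t_i-),x(t_i))+d(x(t_i),x(t))$: the outer two terms are at most $w_x'(\delta)+\epsilon'$ (the first one after letting $s'\uparrow t_i$ within the block $[t_{i-1},t_i)$ and using continuity of the metric) and the middle one is at most $J(x)$. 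Letting $\epsilon'\to0$ gives the inequality.

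To conclude, hypothesis (i) supplies the compact containment condition and the bound $\lim_{\delta\to0}\limsup_n P_n(w_x'(\delta)>\epsilon)=0$, and hypothesis (ii) is exactly the statement that $\lim_n P_n(J(x)>\epsilon)=0$ for every $\epsilon>0$. From the displayed inequality,
\[
P_n\big(w_x(\delta)>\epsilon\big)\le P_n\big(w_x'(\delta)>\epsilon/4\big)+P_n\big(J(x)>\epsilon/2\big),
\]
so taking $\limsup_n$ and then $\delta\to0$ makes both terms vanish; combined with compact containment this is precisely the criterion for $\mc C$-tightness. I expect the only delicate points to be keeping the two tightness criteria (and the exact role of the compact containment condition) straight, and being careful in the proof of the modulus inequality with the left limits at block boundaries and with the harmless convention that the final block of the Skorohod partition may be shorter than $\delta$; all of the probabilistic substance of the statement is already contained in its hypotheses, and the proposition is a soft consequence of them.
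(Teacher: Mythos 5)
Your argument is correct and is the classical one: the deterministic inequality $w_x(\delta)\le 2\,w_x'(\delta)+J(x)$ between the uniform modulus, the Skorohod modulus and the maximal jump, combined with the union bound $P_n(w_x(\delta)>\epsilon)\le P_n(w_x'(\delta)>\epsilon/4)+P_n(J(x)>\epsilon/2)$, is exactly how this equivalence is established in the standard references (Billingsley; Jacod--Shiryaev, Prop.~VI.3.26), and your constants and the handling of the left limit at the block boundary are right. The paper itself gives no proof of Proposition~\ref{p11} --- it is quoted from \cite{FPV} --- so there is nothing to compare against; your write-up supplies a correct, self-contained justification of the cited fact.
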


In other words, this proposition tells us that if the jumps of $\mc Y_t^n(G)$ are getting smaller and smaller with $n$, then $J_1$-tightness and $\mc C$-tightness are equivalent. In our case 
\[
\sup_{0 \leq t \leq T}Ê\big| \mc Y_t^n(G)-\mc Y_{t-}^n(G)\big| \leq \frac{\|G\|_\infty}{\sqrt n},
\]
and it is enough to prove tightness of $\{\{\mc Y_t^n(G); t \in [0,T]\};n \in \bb N\}$ with respect to the $J_1$-Skorohod topology in $\mc D([0,T],\bb R)$. Notice that the process $M_t^n(G)$ also satisfies condition ii) of Proposition~\ref{p11}. The following criterion, known as Aldous' criterion, is very effective to prove tightness with respect to the $J_1$-Skorohod topology. 

\begin{proposition}[Aldous' criterion \cite{Ald}]
\label{p12}
Let $\{P_n;n \in \bb N\}$ be a sequence of probability measures on $\mc D([0,T], X)$. Let us assume that:
\item[i)] For any $t \in [0,T]$ and any $\varepsilon >0$ there is a compact set $K=K(t,\varepsilon)$ such that 
\[
\sup_{n \in \bb N}ÊP_n(x(t) \in K) \leq \varepsilon,
\]
\item[ii)] for any $\varepsilon >0$,
\[
\lim_{\delta \to 0} \limsup_{n \to \infty} \sup_{0 \leq \gamma \leq \delta}\sup_{\tau \in \mc T_T} P_n(d(x(\tau+\gamma,x))>\varepsilon) =0,
\]
where $\mc T_T$ is the set of stopping times bounded by $T$. Here we define $x(\tau+\gamma) = x(\tau)$ if $t+\gamma >T$. 
\end{proposition}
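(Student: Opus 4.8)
The statement is the classical tightness criterion of Aldous, so the plan is essentially to reproduce Aldous' stopping‑time argument (as presented, e.g., in Billingsley's book). Since $X$ is a complete separable metric space, $\mc D([0,T],X)$ with the $J_1$‑Skorohod topology is itself Polish, so by Prohorov's theorem proving tightness of $\{P_n\}$ is the same as proving relative compactness of the family of laws, and for this it suffices to exhibit, for each $\varepsilon>0$, a compact set $\mc K_\varepsilon\subseteq\mc D([0,T],X)$ with $\inf_n P_n(\mc K_\varepsilon)\ge 1-\varepsilon$. The tool here is the Arzelà--Ascoli‑type characterization of compactness in $\mc D([0,T],X)$: a set is relatively compact precisely when its time sections $\{x(t)\}$ are relatively compact in $X$ for each $t$ and the Skorohod moduli $w'(x,\delta)$ tend to $0$ uniformly over the set as $\delta\to 0$. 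Hypothesis (i) is exactly the probabilistic form of the section condition, so the whole problem reduces to the probabilistic modulus estimate
\[
\lim_{\delta\to 0}\ \limsup_{n\to\infty}\ P_n\big(w'(x,\delta)>\varepsilon\big)=0\qquad\text{for every }\varepsilon>0,
\]
after which a routine countable‑intersection argument over a dense set of times assembles $\mc K_\varepsilon$.

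To deduce this modulus estimate from hypothesis (ii), I would introduce the \emph{$\varepsilon$‑oscillation stopping times}: set $\tau_0=0$ and recursively $\tau_{k+1}=\inf\{t>\tau_k:\ d(x(t),x(\tau_k))>\varepsilon\}\wedge T$, each of which lies in $\mc T_T$. On the event that every consecutive gap with $\tau_k<T$ satisfies $\tau_{k+1}-\tau_k>\delta$, the partition $\{\tau_k\}$ witnesses $w'(x,\delta)\le 2\varepsilon$, since on each $[\tau_k,\tau_{k+1})$ the path stays within distance $\varepsilon$ of $x(\tau_k)$ and hence oscillates by at most $2\varepsilon$. Thus it suffices to bound, uniformly in $n$, the probability that \emph{some} gap is $\le\delta$; this also requires an a priori uniform control on the number $N_\varepsilon$ of oscillations before time $T$, which one again extracts from (ii) by noting that, starting from any stopping time, an $\varepsilon$‑oscillation cannot occur within a fixed small time $\gamma_0$ with probability close to one, and iterating this bound through the tower property at the $\tau_k$'s.

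The heart of the argument, and the step I expect to be the main obstacle, is relating the event $\{\tau_{k+1}-\tau_k\le\gamma\}$ to hypothesis (ii), which only controls displacements over \emph{deterministic} lags $\gamma\le\delta$ from an arbitrary stopping time. The device is right‑continuity of the paths together with the triangle inequality: on $\{\tau_k<T,\ \tau_{k+1}\le\tau_k+\gamma\}$ one has $d(x(\tau_{k+1}),x(\tau_k))\ge\varepsilon$, hence either $d(x(\tau_k+\gamma),x(\tau_k))\ge\varepsilon/2$ — a displacement over the fixed lag $\gamma$ from the stopping time $\tau_k$, directly covered by (ii) — or $d(x(\tau_k+\gamma),x(\tau_{k+1}))\ge\varepsilon/2$, which one handles by writing $\tau_k+\gamma=\tau_{k+1}+\gamma'$ with $\gamma'\in[0,\gamma)$ and discretizing $\gamma'$ over a fine grid, reducing once more to deterministic‑lag displacements from the stopping time $\tau_{k+1}$. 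Summing these bounds over $k\le N_\varepsilon$ and over the grid, then letting $n\to\infty$ and $\delta\to 0$, gives the modulus estimate; the endpoint $t=T$ is dealt with separately using the convention $x(\tau+\gamma)=x(\tau)$ for $\tau+\gamma>T$ and the compact containment at $t=T$. The one place where genuine care is needed, beyond this, is checking the fine print of the compactness characterization in $\mc D([0,T],X)$ — in particular the behaviour of left and right limits at the partition points $\tau_k$ — which is standard but not entirely mechanical.
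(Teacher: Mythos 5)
The paper does not prove this proposition: it is quoted verbatim from Aldous \cite{Ald}, so your attempt can only be measured against the classical argument. Your architecture is the right one (Prohorov plus the Arzel\`a--Ascoli characterization of compact sets of $\mc D([0,T],X)$, reduction to the modulus estimate $\lim_{\delta\to 0}\limsup_n P_n(w'(x,\delta)>\varepsilon)=0$, and the oscillation stopping times $\tau_0=0$, $\tau_{k+1}=\inf\{t>\tau_k:\,d(x(t),x(\tau_k))>\varepsilon\}$). The gap sits exactly at the step you yourself flag as the heart of the matter. Hypothesis (ii) controls $d(x(\tau+\gamma),x(\tau))$ only for a \emph{deterministic} lag $\gamma$, and you must convert this into a bound on $P_n(\tau_{k+1}\le \tau_k+\delta)$, i.e.\ on the distance between the path evaluated at two \emph{random} times. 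Your device --- write $\tau_k+\gamma=\tau_{k+1}+\gamma'$ and ``discretize $\gamma'$ over a fine grid'' --- does not close the loop: $\gamma'$ is random, and to pass from the grid point $\tau_{k+1}+jh$ nearest to $\gamma'$ back to the actual time $\tau_{k+1}+\gamma'$ you need to control the oscillation of the path over $[\tau_{k+1}+jh,\tau_{k+1}+(j+1)h)$, which is precisely the uniform modulus control the whole proof is trying to establish; refining the grid only reproduces the problem at a smaller scale, while the union bound over grid points degrades as the grid is refined. The same circularity infects your bound on the number $N_\varepsilon$ of oscillations, which you also derive from ``an $\varepsilon$-oscillation cannot occur within time $\gamma_0$''.

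The classical resolution is Aldous' averaging (Fubini) trick, which replaces the single lag by an integral over lags: for stopping times $\sigma\le\tau\le\sigma+\delta$ one bounds $E\big[\mathrm{Leb}\{u\in[0,2\delta]:\,d(x(\sigma+u),x(\sigma))\ge\varepsilon/2\}\big]\le 2\delta\,\eta$ by Fubini and hypothesis (ii) (and similarly with $\tau$ in place of $\sigma$), applies Markov's inequality, and observes that on the resulting good event the two random sets of ``good'' times, $\sigma+\{u:\,d(x(\sigma+u),x(\sigma))<\varepsilon/2\}$ and $\tau+\{v:\,d(x(\tau+v),x(\tau))<\varepsilon/2\}$, must intersect inside $[\tau,\sigma+2\delta]$; at a common point $t^*$ the triangle inequality gives $d(x(\sigma),x(\tau))<\varepsilon$. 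Applied with $\sigma=\tau_k$, $\tau=\tau_{k+1}$ this yields the lemma your sketch needs, namely that $P_n(\tau_{k+1}\le\tau_k+\delta)$ is small uniformly in $k$ and, for large $n$, in $n$; with it, the rest of your outline (bounding $N_\varepsilon$, assembling the compact set from a countable family of such events, and handling finitely many small $n$ by the tightness of each individual $P_n$ on the Polish space $\mc D([0,T],X)$) goes through.
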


Now we turn into the tightness of $\<M_t^n(G)\>$. Let us recall the definition of the seminorm $\|G\|_{1,n}$ given in Section \ref{s3.2}. For a function $G: \bb R \to \bb R$, we have
\[
\|G\|_{1,n}^2 = \frac{1}{n} \sum_{x \in \bb Z} \big(\nabla_x^n G\big)^2.
\]
Notice that for functions $G$ such that $G' \in \mc S(\bb R)$, $\|G\|_{1,n}^2$ converges to $\int G'(x)^2 dx$ when $n$ goes to infinity. Since the number of particles per site is at most equal to 1, we have the simple bound 
\begin{equation}
\label{ec11.1}
\big|\<M_t^n(G)\> -\<M_s^n(G)\>\big| \leq  \epsilon_0^{-1}\|G\|_{1,n}^2|t-s|,
\end{equation}
valid for any $s,t$ (even random). Therefore,
\begin{align*}
\bb P_n\big(|M_{\tau+\gamma}^n(G)-M_\tau^n(G)|Ê>\varepsilon\big)
	&\leq \frac{1}{\varepsilon^2} \bb E_n\big[ \big(M_{\tau+\gamma}^n(G)-M_\tau^n(G)\big)^2\big]\\
	&\leq \frac{1}{\varepsilon^2} \bb E_n\big[ \<M_{\tau+\gamma}^n(G)\>-\<M_\tau^n(G)\>\big]\\
	& \leq \frac{\gamma \epsilon_0^{-1}\|G\|_{1,n}^2}{\varepsilon^2}.
\end{align*}
This proves condition ii) of Proposition~\ref{p12} for the martingales $M_t^n(G)$. Condition i) follows from the fact that $\bb E_n[M_t^n(G)^2]$ is uniformly bounded for $t \in [0,T]$, $n \in \bb N$.

Tightness of the integral term 
\[
\int_0^t \frac{1}{2\sqrt n}\sum_{x \in \bb Z} \tau_x h(\eta_s^n) \Delta_x^n G ds
 \]
 follows from the fact that there exists a constant $C=C(G,\rho)$ such that
 \[
 \bb E_n\Big[ \Big(\frac{1}{2\sqrt n} \sum_{x \in \bb Z} \tau_x h(\eta_t^n) \Delta_x^n G\Big)^2\Big] \leq C(G,\rho)
 \]
 for any $t \in [0,T]$ and any $n \in \bb N$ (see \cite{FPV} or \cite{KL} for more details). We are only left to prove tightness for
 \[
 \mc A_t^n(G) = \int_0^t \sum_{x \in \bb Z} \tau_x f(\eta_s^n) \nabla_x^n G ds.
 \]
It turns out that this term is the most difficult to analyze, and here we use the whole power of Theorem \ref{t2}. Notice that even to prove that $\mc A_t^n(G)$ is in $L^2(\bb P_n)$ seems hard to prove at first glance. From estimate \eqref{ec8} we see that
\begin{equation}
\label{ec21}
\bb E_n\big[ \big( \mc A_t^n(G) -\mc A_t^{n,\epsilon n}(G)\big)^2\big] \leq  C_n(\rho,G) t\epsilon.
\end{equation}
The constant $C_n(\rho,G)$ can be chosen as equal to $C(\rho) \|G\|_{1,n}^2$. At this part of the argument, the only important point is that $C_n(\rho,G)$ is uniformly bounded on $n$.
Take $H_x = \nabla_x^n G$ and define $H_x^k$ as in \eqref{ec9}. Using Cauchy-Schwarz inequality plus the invariance of $\nu_\rho$ under the dynamics, we obtain the estimate
\begin{equation}
\label{ec14}
\bb E_n\big[\mc A_t^{n,\epsilon n}(G)^2\big] \leq \frac{C_n(\rho,G)t^2}{\epsilon}.
\end{equation}
Putting these two estimates together we conclude that
 \begin{equation}
 \label{ec12}
 \bb E_n\big[ \mc A_t^{n}(G)^2\big] \leq C_n(\rho,G) \Big\{ t \epsilon +\frac{t^2}{\epsilon}\Big\}.
 \end{equation} 
Since the process $\eta_t^n$ is stationary, the same bound holds for $\bb E_n[(\mc A_{t+s}^n(G)-\mc A_s^n(G))^2]$ for any $s \in [0,T-t]$. Moreover, we can choose the constant $\epsilon$ in a convenient way. However, there is a small constraint on $\epsilon$. At the beginning of the multiscale analysis in Section \ref{s3.2} we took $k>l$. This choice, plus some parity considerations impose the restriction $\epsilon > 2l/n$. Taking $\epsilon = |t-s|^{1/2}$ we obtain that for any $s,t \in [0,T]$ such that $|t-s| \geq 4 l^2/n^2$ and any $n \in \bb N$,
 \begin{equation}
 \label{ec13}
 \bb E_n\big[\big(\mc A_t^n(G) -\mc A_s^n(G)\big)^2\big] \leq C(\rho,G) |t-s|^{3/2}.
 \end{equation}
If $|t-s| < 4l^2 /n^2$, we can simply use Cauchy-Schwarz inequality plus the sta\-tion\-arity of $\eta_t^n$ to obtain the bound
\begin{align*}
\bb E_n\big[ \big(\mc A_t^n(G) -\mc A_s^n(G)\big)^2\big] 
	&\leq C(\rho,G) |t-s|^2 n\\
	&\leq C(\rho,G)  2 l |t-s|^{3/2}.
\end{align*}

Therefore,  taking a bigger constant if needed, \eqref{ec13} holds for any $s,t \in [0,T]$ and any $n \in \bb N$. Notice that there is a constant $C(\rho)$ such that this estimate is still valid if we replace $C(\rho,G)$ by $C(\rho) \|G\|_{1,n}^2$.
Let us recall Kolmogorov-Prohorov-Centsov criterion for tightness:

\begin{proposition}[Kolmogorov-Prohorov-Centsov]
\label{p13}
Let $\{\{x_n(t); t\in [0,T]\}, n \in \bb N\}$ be a sequence of continuous processes in $\bb R$. Let us assume that there exist positive constants $\alpha, \beta, K$ such that
\[
E[|x_n(t)-x_n(s)|^\alpha] \leq K|t-s|^{1+\beta}
\]
for any $s,t \in [0,T]$ and any $n \in \bb N$. Assume as well that $\{x_n(0);n \in \bb N\}$ is tight. Then the sequence $\{\{x_n(t); t \in [0,T]\}, n \in \bb N\}$ is tight. Moreover, for any $\gamma <\alpha/\beta$, any limit point of $\{\{x_n(t); t\in [0,T]\}, n \in \bb N\}$ is almost-surely H\"older-continuous of index $\gamma$.
\end{proposition}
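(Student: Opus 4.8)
This is the classical Kolmogorov--Chentsov criterion, stated for a sequence of processes; the plan is to convert the two-point moment bound into a uniform-in-$n$ control of a H\"older seminorm of $x_n$, and then to extract tightness from Arzela--Ascoli and the H\"older regularity of limit points from a portmanteau argument. Throughout I would work in $C([0,T],\bb R)$ with the sup-norm, which is where the processes live.

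First I would fix $\gamma$ strictly below the critical exponent $\beta/\alpha$ (this is the range in which the bound is effective) and apply the Garsia--Rodemich--Rumsey inequality with $\Psi(u)=u^\alpha$ and $p(u)=u^{\gamma+2/\alpha}$. This reduces the pointwise H\"older bound for a trajectory $f$ to the finiteness of $B(f)=\int_0^T\!\int_0^T|f(t)-f(s)|^\alpha\,|t-s|^{-\alpha\gamma-2}\,ds\,dt$, in the form $[f]_{C^\gamma}\le c\,B(f)^{1/\alpha}$ with $c=c(\alpha,\beta,\gamma,T)$. Taking $f=x_n$, taking expectations and using the hypothesis together with the fact that $1+\beta-\alpha\gamma-2>-1$ (which is exactly the condition $\gamma<\beta/\alpha$), the integral $\int_0^T\!\int_0^T|t-s|^{1+\beta-\alpha\gamma-2}\,ds\,dt$ converges, so $\sup_n\bb E\big[[x_n]_{C^\gamma}^\alpha\big]\le c_\gamma<\infty$. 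A plain dyadic chaining argument over scales $2^{-k}$ gives the same conclusion without invoking Garsia--Rodemich--Rumsey, and is the route that works for all $\alpha>0$.

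With this uniform estimate the remaining steps are routine. For tightness: given $\varepsilon>0$, use tightness of $\{x_n(0)\}$ to choose $M$ with $\sup_nP(|x_n(0)|>M)<\varepsilon/2$, and Markov's inequality to choose $A$ with $\sup_nP([x_n]_{C^\gamma}>A)\le c_\gamma A^{-\alpha}<\varepsilon/2$; the set $K_{M,A}=\{f\in C([0,T],\bb R):|f(0)|\le M,\ [f]_{C^\gamma}\le A\}$ is bounded, equicontinuous and closed, hence compact by Arzela--Ascoli, and $\sup_nP(x_n\notin K_{M,A})<\varepsilon$. For the regularity of limit points: if $x_{n'}\Rightarrow x$ and $A>0$, the set $\{f:[f]_{C^\gamma}\le A\}$ is closed, so the portmanteau theorem gives $P([x]_{C^\gamma}\le A)\ge\limsup_{n'}P([x_{n'}]_{C^\gamma}\le A)\ge 1-c_\gamma A^{-\alpha}$; letting $A\to\infty$ shows $[x]_{C^\gamma}<\infty$ almost surely, and since $\gamma<\beta/\alpha$ was arbitrary, $x$ is almost surely H\"older-continuous of every order below $\beta/\alpha$.

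The only step that is not purely mechanical is the first one: passing from the two-point moment control to an almost-sure H\"older modulus whose seminorm is bounded in $L^\alpha$ uniformly in $n$. Garsia--Rodemich--Rumsey packages this, but one has to choose the pair $(\Psi,p)$ so that the resulting H\"older exponent sweeps out the whole interval $(0,\beta/\alpha)$; the chaining alternative instead needs a careful union bound and Borel--Cantelli bookkeeping across dyadic scales, and that is where the exponents must be tracked with care. Everything downstream --- Arzela--Ascoli, Markov, portmanteau --- is standard.
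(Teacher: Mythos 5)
Your argument is a correct and complete proof of the Kolmogorov--Chentsov tightness criterion; the paper states Proposition \ref{p13} as a classical fact without proof, so there is nothing to compare it against, and the route you take (a uniform-in-$n$ $L^\alpha$ bound on the $\gamma$-H\"older seminorm via Garsia--Rodemich--Rumsey or dyadic chaining, then Arzel\`a--Ascoli plus Markov for tightness and lower semicontinuity of $f\mapsto[f]_{C^\gamma}$ plus portmanteau for the regularity of limit points) is the standard one. Two remarks. First, you are right to flag the chaining alternative for $0<\alpha<1$: the usual statement of Garsia--Rodemich--Rumsey assumes $\Psi$ convex, so $\Psi(u)=u^\alpha$ is only directly admissible for $\alpha\geq 1$; in the paper's application $\alpha=2$, so either route works. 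Second, you have silently corrected a typo in the statement: the H\"older range must be $\gamma<\beta/\alpha$, not $\gamma<\alpha/\beta$ as printed, and the paper's own use of the proposition (with $\alpha=2$, $1+\beta=3/2$, concluding $\gamma<1/4$ in Corollary \ref{cor1}) confirms that $\beta/\alpha$ is intended. It would be worth saying explicitly that $\{f:[f]_{C^\gamma}\leq A\}$ is closed because the seminorm is a supremum of sup-norm-continuous functionals, hence lower semicontinuous, but this is a one-line addition.
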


Invoking this proposition, we conclude that $\mc A_t^n(G)$ is tight. This finishes the proof of tightness of the sequence $\{\{\mc Y_t^n(G); t \in [0,T]\}, n\in \bb N\}$ and in consequence of $\{\{\mc Y_t^n; t \in [0,T]\}, n\in \bb N\}$.

Notice that this proposition also gives information about the limit points of $\mc A_t^n(G)$. If $\{\mc A_t; t\in[0,T]\}$ is such a limit point, then $\mc A_t^n(G)$ is H\"older-continuous of index $\gamma$ for any $\gamma<1/4$.

\subsection{Limit points of the density field}
\label{s2.2}
In this section we finish the proof of Theorem~\ref{t1} by showing that any limit point of $\mc Y_t^n$ is a weak solution of the KPZ equation. Later we also obtain some additional properties of the limiting points, with the aim of obtaining a uniqueness result for such solutions.

In Sect.~\ref{s2.1} we have showed that the sequence of processes $\{\mc Y_t^n ;n \in \bb N\}$ is $\mc C$-tight in $\mc D([0,T],\mc S'(\bb R))$. Let $\{\mc Y_t;t \in [0,T]\}$ be a limit point of $\mc Y_t^n$. From now on and up to the end of this section we adopt the following abuse of notation: $n$ will denote a subsequence for which $\mc Y_t^n$ converges to $\mc Y_t$. Let us recall the decomposition \eqref{ec.M1}:
\begin{equation}
\label{ec.M1'}
\mc Y_t^n(G) = \mc Y_0^n(G) + \mc I_t^n(G) + \mc A_t^n(G) + M_t^n(G),
\end{equation}
where for ease of notation we have written
\[
\mc I_t^n(G) = \int_0^t \frac{1}{2\sqrt n} \sum_{x \in \bb Z} \tau_x h(\eta_s^n) \Delta_x^n G ds.
\]

We can choose the subsequence $n$ in such a way that the processes $M_t^n$ and $\mc A_t^n$  have limits $M_t$, $\mc A_t$, well defined as processes in $\mc C([0,T],\mc S'(\bb R))$. By Proposition~\ref{p4}, the process $\mc I_t^n$ converges to $ \frac{1}{2}\varphi'(\rho)\int_0^t \mc Y_s ds$. Our first task is to prove that $M_t(G)$ is a continuous martingale for any $G \in \mc S(\bb R)$. Therefore, we need to prove that the martingale property is preserved by passing to the limit. This is not true in general under the only assumption of convergence in distribution. Therefore, we need an extra argument. 
The extra required property is {\em uniform integrability}. For the reader's convenience, we recall here some elements of uniform integrability which will be needed. 

We say that a sequence of random variables $\{X_n; n \in \bb N\}$ is uniformly integrable if
\[
\lim_{M \to \infty} \sup_{n \in \bb N} \int |X_n| \mathbf{1}(|X_n| \geq M) dP =0.
\]
The simplest criterion to verify the uniform integrability of a given sequence $\{X_n;n \in \bb N\}$ is a moment bound:

\begin{proposition}
\label{p14}
If there exists $p>1$ such that $\sup_n E|X_n|^p <+\infty$, then $\{X_n;n \in \bb N\}$ is uniformly integrable.
\end{proposition}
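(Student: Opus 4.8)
The plan is to use the standard one-line argument that trades a power of $|X_n|$ against the truncation level. Fix $M>0$. On the event $\{|X_n|\geq M\}$ we have $|X_n|^{p-1}\geq M^{p-1}$, and therefore the pointwise bound
\[
|X_n|\,\mathbf{1}(|X_n|\geq M) \;\leq\; \frac{1}{M^{p-1}}\,|X_n|^p
\]
holds everywhere (it is trivially true on the complementary event, where the left-hand side vanishes). Integrating this inequality against $P$ gives
\[
\int |X_n|\,\mathbf{1}(|X_n|\geq M)\,dP \;\leq\; \frac{1}{M^{p-1}}\int |X_n|^p\,dP \;\leq\; \frac{C}{M^{p-1}},
\]
where $C := \sup_{n} E|X_n|^p < +\infty$ by hypothesis.

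Since this bound is uniform in $n$, we obtain $\sup_{n \in \bb N}\int |X_n|\,\mathbf{1}(|X_n|\geq M)\,dP \leq C\,M^{-(p-1)}$. Because $p>1$, the exponent $p-1$ is strictly positive, so the right-hand side tends to $0$ as $M \to \infty$. This is exactly the definition of uniform integrability, so the proof is complete. (In the sequel this criterion will be applied with $p=2$ to the sequences $M_t^n(G)$, $\mc A_t^n(G)$, etc., using the $L^2$ bounds established in Section~\ref{s2.1}, in order to pass the martingale property to the limit.)

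There is essentially no obstacle in this argument; it is the classical de la Vall\'ee-Poussin-type estimate. The only points worth flagging are that $p>1$ is used in an essential way --- for $p=1$ the conclusion is false in general --- and that one must make the estimate uniform in $n$ \emph{before} letting $M\to\infty$, which is why the hypothesis is stated as a uniform moment bound $\sup_n E|X_n|^p<+\infty$ rather than as finiteness of each individual moment.
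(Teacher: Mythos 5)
Your argument is correct: the pointwise bound $|X_n|\mathbf{1}(|X_n|\geq M)\leq M^{-(p-1)}|X_n|^p$, integrated and made uniform in $n$ via the hypothesis, is exactly the classical proof of this criterion. The paper states Proposition \ref{p14} without proof, treating it as standard, so there is nothing to compare against; your write-up supplies precisely the expected argument and correctly flags where $p>1$ and the uniformity in $n$ are used.
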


And a simple criterion to show that a limit of martingales is also a martingale is the following.

\begin{proposition}
\label{p15}
Let $\{\{\mc M_t^n, t \in [0,T]\};n \in \bb N\}$ be a sequence of martingales converging in distribution to some limit process $\mc M_t$. If for any fixed time  $t$ the sequence $\{\mc M_t^n; n \in \bb N\}$ is uniformly integrable, then the process $\mc M_t$ is a martingale.
\end{proposition}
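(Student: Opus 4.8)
The plan is to encode the martingale property as a family of identities $E[(\mc M_t^n-\mc M_s^n)\Phi_n]=0$, where $\Phi_n$ is a bounded functional of the path of $\mc M^n$ up to time $s$, then to pass these identities to the limit along the convergent subsequence, and finally to upgrade the limiting identity to the full martingale property by a monotone-class argument. First I would fix $0\le s<t\le T$, an integer $k$, times $0\le t_1<\cdots<t_k\le s$, and a bounded continuous function $\phi:\bb R^k\to\bb R$. Since each $\mc M_\cdot^n$ is a martingale and $\phi(\mc M_{t_1}^n,\dots,\mc M_{t_k}^n)$ is measurable with respect to the relevant $\sigma$-algebra at time $s$, we have
\[
E\big[\big(\mc M_t^n-\mc M_s^n\big)\phi(\mc M_{t_1}^n,\dots,\mc M_{t_k}^n)\big]=0
\]
for every $n$. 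Because in the situation of interest the limit process has continuous trajectories (the martingales $M_t^n(G)$ were shown to be $\mc C$-tight in Section~\ref{s2.1}), convergence in distribution in the Skorohod space implies convergence of the finite-dimensional distributions at every finite collection of times; hence the vector $(\mc M_{t_1}^n,\dots,\mc M_{t_k}^n,\mc M_t^n-\mc M_s^n)$ converges in distribution to $(\mc M_{t_1},\dots,\mc M_{t_k},\mc M_t-\mc M_s)$.

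The functional $(x_1,\dots,x_k,y)\mapsto y\,\phi(x_1,\dots,x_k)$ is continuous but unbounded, so convergence in distribution does not by itself yield convergence of expectations; this is exactly where the uniform integrability hypothesis enters. Since $\phi$ is bounded, $\big|\big(\mc M_t^n-\mc M_s^n\big)\phi(\mc M_{t_1}^n,\dots,\mc M_{t_k}^n)\big|\le\|\phi\|_\infty\big(|\mc M_t^n|+|\mc M_s^n|\big)$, and the right-hand side is uniformly integrable because $\{\mc M_t^n;n\in\bb N\}$ and $\{\mc M_s^n;n\in\bb N\}$ are. Therefore $\big(\mc M_t^n-\mc M_s^n\big)\phi(\mc M_{t_1}^n,\dots,\mc M_{t_k}^n)$ is a uniformly integrable sequence, and convergence in distribution together with uniform integrability gives convergence of the means (for instance, apply Skorohod's representation theorem to obtain almost sure convergence on an auxiliary probability space and then Vitali's theorem). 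Passing to the limit in the identity above yields
\[
E\big[\big(\mc M_t-\mc M_s\big)\phi(\mc M_{t_1},\dots,\mc M_{t_k})\big]=0.
\]
Applying the same reasoning to $|\mc M_t^n|$ in place of $\big(\mc M_t^n-\mc M_s^n\big)\phi(\cdots)$ also gives $E|\mc M_t^n|\to E|\mc M_t|$, so each $\mc M_t$ is integrable.

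It then remains to remove the restriction to functionals of the special form $\phi(\mc M_{t_1},\dots,\mc M_{t_k})$. The collection of bounded random variables $Z$, measurable with respect to $\mc F_s:=\sigma(\mc M_u;u\le s)$ and satisfying $E[(\mc M_t-\mc M_s)Z]=0$, contains all such products (with $t_i\le s$ and $\phi$ bounded continuous), is a vector space closed under bounded monotone pointwise limits, and hence, by the monotone class theorem, consists of every bounded $\mc F_s$-measurable random variable. This says precisely that $E[\mc M_t\mid\mc F_s]=\mc M_s$ for all $s\le t$, i.e. that $\{\mc M_t;t\in[0,T]\}$ is a martingale with respect to its natural filtration. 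The only genuinely delicate step is the exchange of limit and expectation for the unbounded functional $y\,\phi$, which the uniform integrability assumption is designed precisely to license; everything else is routine.
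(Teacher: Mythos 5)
Your proof is correct; the paper states Proposition \ref{p15} without proof, treating it as a standard criterion, and your argument --- testing the martingale identity against bounded continuous functionals $\phi(\mc M_{t_1}^n,\dots,\mc M_{t_k}^n)$ of finitely many coordinates, passing to the limit via Skorohod representation and Vitali's theorem under the uniform integrability hypothesis, and concluding with a monotone class argument --- is the standard one. You also correctly flag the only delicate point, namely that finite-dimensional convergence in the Skorohod topology is guaranteed a priori only at almost-sure continuity times of the limit, which is harmless in the application since the limit martingales there are continuous ($\mc C$-tightness of $M_t^n(G)$ is established in Section \ref{s2.1}).
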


Let us go back to our problem. Since the number of particles per site is bounded by $1$, the quadratic variation of $M_t^n(G)$ satisfies the deterministic bound $\<M_t^n(G)\> \leq \epsilon^{-1}_0 t \|G\|_{1,n}^2$. Therefore, $\bb E_n[M_t^n(G)^2] = \bb E_n[\<M_t^n(G)\>]$ is uniformly bounded, which proves that $M_t(G)$ is a martingale.

Thanks to Theorem~\ref{t3}, the process $\mc A_t$ satisfies
\[
\mc A_t(G) = \lim_{\epsilon \to 0} \int_0^t \int_{\bb R} \mc Y_s(i_\epsilon(x))^2 \frac{G(x+\epsilon)- G(x)}{\epsilon}dx ds.
\]
This form is slightly different to the conclusion of Theorem~\ref{t3}, where only a discrete summation in space is present. Considering sublattices of $\bb Z$ with different relative positions (something of the form $k \bb Z +l$), we can pass from a discrete spatial sum to the integral above. One important consequence of Theorem~\ref{t3} is that the process $\mc Y_t$ is such that the limit above is well defined, a property that does not hold for any process in $\mc D([0,T],\mc S'(\bb R))$. 

Estimating the variance of $\<M_t^n(G)\>$ using Cauchy-Schwarz inequality, we see that $\<M_t^n(G)\>$ converges in $L^2(\bb P_n)$ to $\chi(\rho)\varphi'(\rho) t \int G'(x)^2 dx$. In order to finish the proof of Theorem~\ref{t1} we are only left to prove that the quadratic variation of $M_t(G)$ is equal to this limit. This is equivalent to prove that $M_t(G)^2 - \chi(\rho)\varphi'(\rho) t \|G\|_1^2$ is a martingale. We already know that $M_t^n(G)^2 - \<M_t^n(G)\>$ is a martingale. Therefore, by Proposition~\ref{p15} we only need to show that $M_t^n(G)^2-\<M_t^n(G)\>$ is uniformly integrable. It is immediate to show that the second moments of $\<M_t^n(G)\>$ are uniformly bounded in $n$, as well as the fourth moments of $\mc Y_t^n(G)$ and $\mc I_t^n(G)$. Therefore, we only need to show that $\mc A_t^n(G)^2$ is uniformly integrable. In the course of the proof of Theorem~\ref{t3} we obtained a uniform bound on the $L^2$ norm of $\mc A_t^n(G)$, which is not enough to prove uniform integrability. A major step of the proof was obtained using Kipnis-Varadhan estimate. But Kipnis-Varadhan estimate does not lead to any useful information about moments of order higher than $2$, unless we impose restrictive additional hypothesis.   Therefore, we need another idea. A sequence bounded in $L^1$ is not uniformly integrable, but a sequence converging to 0 in $L^1$ {\em is} uniformly integrable. One more time we will use the decomposition 
\[
\mc A_t^n(G) = \big\{\mc A_t^n(G) - \mc A_t^{n,\epsilon n}(G)\big\} + \mc A_t^{n,\epsilon n}(G).
\]

In \eqref{ec14} we estimated the $L^2$-norm of the last term using Cauchy-Schwarz inequality. Using Proposition \ref{p9} with $p=2$, the same computations (replacing Cauchy-Schwarz inequality by H\"older inequality) give the bound
\[
\bb E_n\big[Ê\mc A_t^{n,\epsilon n}(G)^4\big] \leq \frac{C(G,\rho) t^4}{\epsilon^2}
\]
for any $\epsilon \leq 1$, where $C(G,\rho)$ is a constant who does not depend on $n$, $\epsilon$ or $t$. Therefore, $\mc A_t^n(G)$ is the sum of two terms, the first one (which is $\mc A_t^n(G) -\mc A_t^{n,\epsilon n}(G)$) has second moment bounded by $C(G,\rho) \epsilon t$ and the second one (which is $\mc A_t^{n,\epsilon n}(G)$) has fourth moment bounded by $C(G,\rho) t^4/\epsilon^2$ for any $\epsilon \leq 1$. From this observation and a good choice for $\epsilon$, the uniform integrability of $\mc A_t^n(G)^2$ follows: for $t \in [0,T]$ we have
\begin{align*}
\bb E_n\big[ \mc A_t^n(G)^2 \mathbf{1}(|\mc A_t^n(G)| \geq M)\big]
	 &\leq 2  \bb E_n\big[(\mc A_t^n(G) -\mc A_t^{n,\epsilon n}(G)^2\big]\\
	&\quad + 2\bb E_n \big[Ê\mc A_t^{n,\epsilon n}(G)^2 \mathbf{1}(|\mc A_t^n(G)| \geq M)\big]\\
	&\leq C \epsilon t + 2 \bb E_n \big[Ê\mc A_t^{n,\epsilon n}(G)^4\big]^{1/2}  \frac{\bb E_n \big[ \mc A_t^n(G)^2\big]^{1/2}}{M^{1/2}}\\
	&\leq C \Big\{ \epsilon +\frac{1}{\epsilon M^{1/2}}\Big\},
\end{align*}
 where $C$ is a constant who depends only on $G$, $\rho$ and $T$. Choosing $\epsilon = M^{-1/4}$  (actually $\epsilon = M^{-\delta}$ for any $\delta \in (0,1/2)$ would suffice) we conclude that $\mc A_t^n(G)$ is uniformly integrable. This proves that $M_t(G)$ is a martingale for any $G \in \mc S(\bb R)$.
 
 Up to here, we have proved that the process $\{\mc Y_t; t \in [0,T]\}$ is a weak stationary solution of the KPZ equation \eqref{ec.KPZ}. It only remains to prove that $\{\mc Y_t, t \in [0,T]\}$ is also an energy solution. This is not difficult to prove, since upper moment bounds are preserved by convergence in distribution. The bound
 \[
  E[\mc I_{s,t}(G)^2] \leq K(t-s)\|G\|_1^2
 \]
follows from the energy estimate stated in Proposition \ref{p16}Êbelow and stationarity. And the bound 
\[
\bb E\big[\big(\mc A_{s,t}(G) -\mc A_{s,t}^\epsilon(G)\big)^2\big] \leq C(\rho \epsilon t \|G\|_1^2
\]
follows from \eqref{ec21}.

Corollary \ref{cor1} is follows at once from \eqref{ec13} and Proposition \ref{p13}. And Theorem \ref{t2} follows by taking $c \equiv 1$ and identifying the process $\{\mc Y_t; t \in [0,T]\}$ with the Cole-Hopf solution of \eqref{ec.KPZ} obtained by Bertini and Giacomin.
\section{Current and height fluctuations}
\label{s4.0}
 In this section we prove the results stated in Section \ref{s1.4}. The idea is to exploit the formal relation $n^{-1/2} (J_t^n(0)-\bb E_n[J_t^n(0)]) = \mc Y_t^{n,*}(H_0)$, where the field $\mc Y_t^{n,*}$ is defined by
 \[
 \mc Y_t^{n,*}(G) = \mc Y_t^n(G) - \mc Y_0^n(G).
 \]
 
Clearly $H_0 \notin \mc S(\bb R)$ and therefore this relation must be justified. We consider the cut-off functions $G_l:\bb R \to \bb R$ defined by $G_l(x) = H_0(x)(1-x/l)^+$. This idea is due originally to Rost and Vares \cite{RV} and here we closely follow the exposition of \cite{JL}. For each $x \in \bb Z$, the process $J_t^n(x)$ is a compound Poisson process of compensator $j_{x,x+1}^n(\eta_s^n)-j_{x+1,x}^n(\eta_s^n)$, where
\[
j_{x,x+1}^n(\eta) = n^2 p_n c_x(\eta)\eta(x)(1-\eta(x+1))
\]
and
\[
j_{x+1,x}^n(\eta) = n^2 q_n c_x(\eta) \eta(x+1)(1-\eta(x)).
\]
Therefore, the process
\[
J_t^n(x) - \int_0^t \big\{ j_{x,x+1}^n(\eta_s^n) -j_{x+1,x}^n(\eta_s^n) \big\}ds
\]
is a martingale of quadratic variation
\[
\int_0^t \big\{j_{x,x+1}^n(\eta_s^n) + j_{x+1,x}^n(\eta_s^n) \big\} ds.
\]
Moreover, these martingales are mutually orthogonal for different values of $x$. Using the continuity equation, we see that
\begin{equation}
\label{ec4.1}
\theta_t^n(0) = \mc Y_t^{n,*}(G_l) + \mc A_t^n(H_0-G_l) + M_t^n(H_0-G_l) + \mc I_t^n(H_0-G_l).
\end{equation}

Notice that although $H_0-G_l$ is not integrable, $\nabla_x^n (H_0-G_l)$ has bounded support. Therefore, the three fields $\mc A_t^n(H_0-G_l)$, $M_t^n(H_0-G_l)$ and $\mc I_t^n(H_0-G_l)$ are well defined. A key observation here is that $G_l$ has a small energy: $\|G_l\|_{1,n}^2 = l^{-1}$. According to estimates \eqref{ec11.1}, \eqref{ec13}, we see that there exists a constant $K = K(\rho,T)$ such that
\begin{equation}
\label{ec4.2}
\begin{split}
\bb E_n\big[ \mc A_t^n(H_0-G_l)^2\big] 
	&\leq K l^{-1}\\
\bb E_n\big[ M_t^n(H_0-G_l)^2\big] 
	&\leq K l^{-1}.
\end{split}
\end{equation}

We also know the dependence of $K$ in $T$, but at this point the dependence will not be important.
The following proposition, known as the {\em energy estimate} tells us that $\mc I_t^n(H_0-G_l)$ satisfies the same estimate.

\begin{proposition}
\label{p16}
There exists a constant $K= K(\rho)$ such that
\[
\bb E_n\big[\mc I_t^n(G)^2 \big] \leq K t \|G\|_{1,n}^2
\]
for any $t \geq 0$ and any $G: \bb R \to \bb R$ such that $\|G\|_{1,n}<+\infty$.
\end{proposition}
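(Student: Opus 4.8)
The plan is to bound $\mc I_t^n(G)$ with the Kipnis--Varadhan inequality of Proposition~\ref{p5}, after rewriting its integrand as a sum of mean-zero local functions by means of the gradient condition. The point to keep in mind is that the natural integrand has variance of order $n^2\|G\|_{1,n}^2$ under $\nu_\rho$, so a static estimate is hopeless and the time evolution must be used; the gradient condition is precisely what makes this possible.

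First I would rewrite $\mc I_t^n(G)=\int_0^t\frac{1}{2\sqrt n}\sum_{x}\tau_xh(\eta_s^n)\Delta_x^nG\,ds$. Using $\Delta_x^nG=n\big(\nabla_x^nG-\nabla_{x-1}^nG\big)$, a discrete summation by parts, and condition iv) in the translated form $\tau_{x+1}h-\tau_xh=c_x(\eta)\big(\eta(x+1)-\eta(x)\big)$, this becomes
\[
\mc I_t^n(G)=-\frac{\sqrt n}{2}\int_0^t\sum_{x\in\bb Z}c_x(\eta_s^n)\big(\eta_s^n(x+1)-\eta_s^n(x)\big)\nabla_x^nG\,ds.
\]
Setting $f_n(\eta)=-\frac{\sqrt n}{2}\sum_{x}c_x(\eta)\big(\eta(x+1)-\eta(x)\big)\nabla_x^nG$ we have $\mc I_t^n(G)=\int_0^tf_n(\eta_s^n)\,ds$. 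By the reversibility condition iii), $c$ does not depend on the variables $\eta(0),\eta(1)$, hence under $\nu_\rho$ it is independent of $\eta(1)-\eta(0)$, so each summand has zero $\nu_\rho$-mean for every $\rho$; ellipticity i) together with $\sum_x(\nabla_x^nG)^2<\infty$ gives $f_n\in L^2(\nu_\rho)$. (If $\nabla^nG$ is not of compact support one first truncates $G$ and passes to the limit using the bound being proved; for the application to $H_0-G_l$ in \eqref{ec4.1} this issue does not occur.) Thus Proposition~\ref{p5} applies and yields $\bb E_n[\mc I_t^n(G)^2]\le\bb E_n\big[\sup_{0\le s\le t}\big(\int_0^sf_n(\eta_u^n)\,du\big)^2\big]\le 20\,t\,\|f_n\|_{-1,n}^2$.

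Next I would estimate the Sobolev norm $\|f_n\|_{-1,n}^2=\sup_g\{2\langle f_n,g\rangle_\rho-\langle g,-L_ng\rangle_\rho\}$. Since $\langle g,-L_ng\rangle_\rho\ge\epsilon_0 n^2\mc D(g)$, it suffices to bound $\sup_g\{2\langle f_n,g\rangle_\rho-\epsilon_0 n^2\sum_x\mc D_x(g)\}$. For a fixed bond, the antisymmetry of $\eta(x+1)-\eta(x)$ under the swap $\eta\mapsto\eta^{x,x+1}$, together with the invariance of $c_x$ and of $\nu_\rho$ under that swap, gives
\[
\int g\,\big(\eta(x+1)-\eta(x)\big)\,c_x\,d\nu_\rho=-\frac12\int(\nabla_{x,x+1}g)\,\big(\eta(x+1)-\eta(x)\big)\,c_x\,d\nu_\rho,
\]
whence by Cauchy--Schwarz and $\epsilon_0\le c\le\epsilon_0^{-1}$ the bond-wise bound $\big|\big\langle c_x(\eta)\big(\eta(x+1)-\eta(x)\big),\,g\big\rangle_\rho\big|\le C(\epsilon_0,\rho)\,\mc D_x(g)^{1/2}$. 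Hence $2|\langle f_n,g\rangle_\rho|\le C\sqrt n\sum_x|\nabla_x^nG|\,\mc D_x(g)^{1/2}$, and optimizing the quadratic $2C\sqrt n\,|\nabla_x^nG|\,u-\epsilon_0 n^2u^2$ in $u=\mc D_x(g)^{1/2}$ separately for each $x$ produces $\|f_n\|_{-1,n}^2\le\frac{C^2}{\epsilon_0 n}\sum_x(\nabla_x^nG)^2=\frac{C^2}{\epsilon_0}\|G\|_{1,n}^2$. Combined with the previous step this gives the claim with $K=K(\rho)=20C^2/\epsilon_0$.

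As for the difficulty: no step is technically deep, and the real content is the first one — recognizing that condition iv) is exactly what lets one move $\Delta^nG$ onto a mean-zero current-type function so that Kipnis--Varadhan can be invoked; once this is done, the $H_{-1}$ estimate is the routine variational computation already used in Section~\ref{s3.1}, the only care needed being the bond-by-bond reduction of $\langle f_n,g\rangle_\rho$ to a $\mc D_x(g)$-weighted sum.
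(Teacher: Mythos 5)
Your proof is correct and follows essentially the route the paper intends: the paper's own justification is just the one-line remark that Proposition~\ref{p16} ``follows easily from Kipnis-Varadhan inequality'' (citing Lemma~4.2 of \cite{Qua}), and your argument — summation by parts via the gradient condition iv) to produce the mean-zero current $-\tfrac{\sqrt n}{2}\sum_x c_x(\eta)(\eta(x+1)-\eta(x))\nabla^n_xG$, followed by the bond-by-bond variational estimate of the $H_{-1}$ norm — is exactly the standard way to carry that out. The only quibbles are cosmetic (constants such as the factor $4$ in the quadratic optimization), and your identification of the gradient condition as the step that makes the integrand amenable to Kipnis-Varadhan is the right emphasis.
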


This proposition follows easily from Kipnis-Varadhan inequality and corresponds to Lemma 4.2 of \cite{Qua} for example. We have the following lemma.

\begin{lemma}
\label{l1} There exists a constant $K= K(\rho,T)$ such that
\begin{equation}
\label{ec4.3}
\sup_{n \in \bb N} \bb E_n\big[\big( \theta_t^n(0) - \mc Y_t^{n,*}(G_l)\big)^2\big] \leq K l^{-1},
\end{equation}
\begin{equation}
\label{ec4.4}
\sup_{m \geq l} \sup_{n \in \bb N} \bb E_n\big[\big( \mc Y_t^{n,*}(G_m)- \mc Y_t^{n,*}(G_l)\big)^2\big] \leq K l^{-1}.
\end{equation}
\end{lemma}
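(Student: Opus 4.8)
The plan is to prove Lemma \ref{l1} by collecting the estimates already established in the excerpt. For \eqref{ec4.3}, I start from the decomposition \eqref{ec4.1}:
\[
\theta_t^n(0) - \mc Y_t^{n,*}(G_l) = \mc A_t^n(H_0-G_l) + M_t^n(H_0-G_l) + \mc I_t^n(H_0-G_l),
\]
so that by the elementary inequality $(x+y+z)^2 \leq 3(x^2+y^2+z^2)$ it suffices to bound the three terms separately. The first two are controlled by \eqref{ec4.2}, and the third by the energy estimate of Proposition \ref{p16}, since $\bb E_n[\mc I_t^n(H_0-G_l)^2] \leq K t \|H_0-G_l\|_{1,n}^2$. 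Hence everything reduces to the crucial computation of $\|H_0-G_l\|_{1,n}^2$. The point is that $\nabla_x^n(H_0-G_l)$ is supported on $\{x : 0 \leq x/n \leq l\}$ and $G_l$ interpolates linearly from $1$ to $0$ on $[0,l]$, so $\nabla_x^n G_l \approx -1/l$ on that range while $\nabla_x^n H_0$ contributes only a single jump at the origin which is cancelled by the corresponding jump of $G_l$; a direct count then gives $\|H_0-G_l\|_{1,n}^2 \leq \|G_l\|_{1,n}^2 = l^{-1}$ as stated in the text, up to negligible boundary corrections. Plugging this into the three bounds yields \eqref{ec4.3} with a constant $K = K(\rho,T)$.

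For \eqref{ec4.4}, I use the same idea applied to the difference $G_m - G_l$ for $m \geq l$. Writing $\mc Y_t^{n,*}(G_m) - \mc Y_t^{n,*}(G_l) = \mc Y_t^{n,*}(G_m-G_l)$ and invoking the decomposition \eqref{ec.M1'} restricted to $\mc Y^{n,*}_t$ (that is, $\mc Y_t^{n,*}(G) = \mc I_t^n(G) + \mc A_t^n(G) + M_t^n(G)$), it again suffices to estimate the three contributions coming from $G = G_m - G_l$. By \eqref{ec11.1}, \eqref{ec13} and Proposition \ref{p16}, each of these is bounded by a constant times $\|G_m-G_l\|_{1,n}^2$ (the time horizon being $[0,T]$ accounts for the $T$-dependence of $K$). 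So the whole matter reduces to showing $\|G_m - G_l\|_{1,n}^2 \leq K l^{-1}$ uniformly in $m \geq l$ and $n$. Since $\nabla_x^n G_l \approx -\tfrac1l \mathbf 1(0 \leq x \leq nl)$ and $\nabla_x^n G_m \approx -\tfrac1m \mathbf 1(0 \leq x \leq nm)$, one computes
\[
\|G_m - G_l\|_{1,n}^2 \approx \frac1n\Big\{ nl\big(\tfrac1m - \tfrac1l\big)^2 + n(m-l)\tfrac1{m^2}\Big\} = l\big(\tfrac1m-\tfrac1l\big)^2 + \tfrac{m-l}{m^2} \leq \tfrac1l,
\]
which gives the claim.

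The only genuinely delicate point is the bookkeeping in these discrete $\|\cdot\|_{1,n}$ computations: one must check that the jump of $H_0$ at the origin is exactly matched by a jump of $G_l$ (so that $H_0 - G_l$ is continuous at $0$ in the relevant discrete sense and $\nabla_x^n(H_0-G_l)$ stays of order $1/l$ rather than of order $1$), and that the discretization of the corner of $G_l$ at $x/n = l$ produces only an $O(1/n)$ error. Neither is hard, but they are the steps where a careless estimate would cost a spurious factor. Once the norm bounds $\|H_0 - G_l\|_{1,n}^2 \leq K l^{-1}$ and $\|G_m - G_l\|_{1,n}^2 \leq K l^{-1}$ are in hand, Lemma \ref{l1} follows immediately by combining Proposition \ref{p16} with \eqref{ec4.2}, \eqref{ec11.1} and \eqref{ec13}, and I would simply assemble these pieces.
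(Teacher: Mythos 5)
Your proof is correct and follows exactly the route the paper takes (which it largely leaves to the reader): decompose via \eqref{ec4.1} and the identity $\mc Y_t^{n,*}(G_m)-\mc Y_t^{n,*}(G_l)=\mc A_t^n(G_m-G_l)+M_t^n(G_m-G_l)+\mc I_t^n(G_m-G_l)$, then apply \eqref{ec4.2}, \eqref{ec11.1}, \eqref{ec13} and Proposition \ref{p16}, reducing everything to the energy bounds $\|H_0-G_l\|_{1,n}^2\leq Kl^{-1}$ and $\|G_m-G_l\|_{1,n}^2\leq Kl^{-1}$. Your explicit verification of these norms, including the cancellation of the unit jumps at the origin in $H_0-G_l$ and $G_m-G_l$, is exactly the bookkeeping the paper omits, and it checks out.
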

The proof of \eqref{ec4.3} follows at once from \eqref{ec4.1}, Proposition \ref{p16} and inequalities \eqref{ec4.2} and we leave it to the reader. 
Estimate \eqref{ec4.4}  follows from the identity
\[
\mc Y_t^{n,*}(G_m) -\mc Y_t^{n,*}(G_l) = \mc A_t^n(G_m-G_l) + M_t^n(G_m-G_l) + \mc I_t^n(G_m-G_l).
\]

\begin{proof}[Proof of Theorem \ref{t4}]
Given Lemma \ref{l1}, the proof follows as in the articles \cite{Gon} and \cite{JL}.
Let $\{\mc Y_t; t \in [0,T]\}$ be a limit point of $\{\{\mc Y_t^n; t\in [0,T]\}; n \in \bb N\}$ and let $\bb P$ be the distribution of $\{\mc Y_t; t\in [0,T]\}$. Let $t >0$ be fixed. Recall the convention about the subsequence $n$. Approximating $G_l$ in $L^2(\bb R)$ by functions in $\mc S(\bb R)$, we see that $\mc Y_t(G_l)$ is well defined. Taking further subsequences if necessary, $\mc Y_t^n(G_l)$ converges in distribution to $\mc Y_t(G_l)$. Therefore, \eqref{ec4.4} is also satisfied by $\mc Y_t(G_l)$. In particular, $\{\mc Y_t(G_l); l \in \bb N\}$ is a Cauchy sequence in $L^2(\bb P)$ and there exists a random variable $\mc J_t(0)$ such that $\mc Y_t(G_l)$ converges to $\mc J_t(0)$ when $n \to \infty$. And from \eqref{ec4.3}, we conclude that $\theta_t^n(0)$ converges in distribution to $\mc J_t(0)$ as $n \to \infty$. Convergence of finite-dimensional distributions follows in the same way.
\end{proof} 

\subsection{Convergence of height fluctuations}
We start showing that $\theta_t^n(x)$ is a well-defined field in $\mc S'(\bb R)$. Looking at the definition \eqref{ec.height} , we see that the field $\theta_t^n$ has the following representation:
\[
\theta_t^n(x) = \theta_t^n(0) - \mc Y_t^{n}(\mathbf{1}_{(0,x]}).
\]

In particular, $\theta_t^n(x)$ can be written as the sum of a real-values process independent of $x$ and a two-sided Brownian motion. Each one of these two processes is well-defined in the sense of distributions. In consequence, $\<\theta_t^n,G\>$ is well defined.

For a function $G \in \mc S(\bb R)$, let us define $\mc T_0 G:\bb R \to \bb R$ as
\[
\mc T_0 G(x) = \int_{-\infty}^x G(y) dy.
\]
Notice that $\mc T_0 G \in \mc S(\bb R)$ if and only if $\int G(x)dx =0$. Let us define as well $\Lambda: \mc S(\bb R) \to \bb R$ by $\Lambda(G) = \int G(x) dx$. The functional $\Lambda$ is continuous with respect to the topology of $\mc S(\bb R)$. Define $f_0: \bb R \to \bb R$ as $f_0(x) = (1+e^{-x})^{-1}$. The operator $\mc T_0$ does not map  $\mc S(\bb R)$ into itself. We define $\mc T : \mc S(\bb R) \to \mc S(\bb R)$ as
\[
\mc T G = \mc T_0 G - \Lambda(G) f_0,
\]
which now maps $\mc S(\bb R)$ into itself.
We have introduced the operator $\mc T$ in order to relate the interface field $\theta_t^n$ to the density fluctuation field $\mc Y_t^n$. It will be convenient to introduce the incremental interface $\theta_t^{n,*}(x) = \theta_t^n(x)- \theta_0^n(x)$. For a function $G \in \mc C^\infty_c(\bb R)$ such that $\int G(x) dx =0$, it is easy to see that
\[
\<\theta_t^n,G\> = \mc Y_t^n(\mc T G).
\]
In terms of the field $\theta_t^{n,*}$, this relation reads
\begin{equation}
\label{ec19}
\<\theta_t^{n,*}, G\> = \mc Y_t^{n,*}(\mc T G).
\end{equation}

For arbitrary $G \in \mc S(\bb R)$, the following lemma shows how to compute $\<\theta_t^{n,*},G\>$.

\begin{lemma}
\label{l2}
There exists a process $\{\mc Y_t^{n,*}(f_0); t \in [0,T]\}$ such that
\[
\<\theta_t^{n,*}, G\> = \mc Y_t^{n,*}(\mc T G) + \Lambda(G) \mc Y_t^{n,*}(f_0)
\]
for any $G \in \mc S(\bb R)$ and any $t \in [0,T]$.
\end{lemma}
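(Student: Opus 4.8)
The plan is to take the process $\mc Y_t^{n,*}(f_0)$ to be $\<\theta_t^{n,*},f_0'\>$; this is legitimate because $f_0'\in\mc S(\bb R)$ (the logistic derivative $f_0'=f_0(1-f_0)$ decays exponentially at $\pm\infty$) and because $\theta_t^{n,*}=\theta_t^n-\theta_0^n$ is a bona fide $\mc S'(\bb R)$-valued process, as observed in the paragraph preceding the lemma, so that $t\mapsto\<\theta_t^{n,*},f_0'\>$ is indeed a process. The asserted identity will then come from \eqref{ec19} applied to a mean-zero modification of $G$, using two elementary facts. First, $\Lambda(f_0')=\int f_0'(x)\,dx=f_0(+\infty)-f_0(-\infty)=1$, so $G-\Lambda(G)f_0'$ has vanishing integral for every $G\in\mc S(\bb R)$. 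Second, $\mc Tf_0'=\mc T_0f_0'-\Lambda(f_0')f_0=f_0-f_0=0$, since $\mc T_0f_0'(x)=\int_{-\infty}^{x}f_0'(y)\,dy=f_0(x)$.

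First I would upgrade \eqref{ec19} from $\mc C_c^\infty(\bb R)$ to all Schwartz functions of vanishing integral. The subspace $\{H\in\mc C_c^\infty(\bb R):\Lambda(H)=0\}$ is dense in $\{H\in\mc S(\bb R):\Lambda(H)=0\}$ (approximate $H$ by $H_k\in\mc C_c^\infty(\bb R)$ in $\mc S(\bb R)$ and replace $H_k$ by $H_k-\Lambda(H_k)\phi$ for a fixed $\phi\in\mc C_c^\infty(\bb R)$ with $\int\phi=1$). Both sides of \eqref{ec19} are continuous on $\mc S(\bb R)$: the left-hand side because $\theta_t^{n,*}\in\mc S'(\bb R)$, the right-hand side because $\mc T$ maps $\mc S(\bb R)$ continuously into itself and $\mc Y_t^{n,*}\in\mc S'(\bb R)$. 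Hence $\<\theta_t^{n,*},H\>=\mc Y_t^{n,*}(\mc TH)$ for every $H\in\mc S(\bb R)$ with $\Lambda(H)=0$.

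Next, for an arbitrary $G\in\mc S(\bb R)$ I would apply this to $H=G-\Lambda(G)f_0'$, which is Schwartz and has vanishing integral by the first fact above. By linearity of $\mc T$ and of $\mc Y_t^{n,*}$, and then by $\mc Tf_0'=0$,
\[
\<\theta_t^{n,*},G\>-\Lambda(G)\<\theta_t^{n,*},f_0'\>=\<\theta_t^{n,*},H\>=\mc Y_t^{n,*}(\mc TG)-\Lambda(G)\,\mc Y_t^{n,*}(\mc Tf_0')=\mc Y_t^{n,*}(\mc TG),
\]
which is precisely the claimed identity once we set $\mc Y_t^{n,*}(f_0):=\<\theta_t^{n,*},f_0'\>$.

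The only point demanding any care — and hence the closest thing to an obstacle — is the continuity of $\mc T:\mc S(\bb R)\to\mc S(\bb R)$ in the first step, which one reads off $(\mc TG)^{(m)}=G^{(m-1)}-\Lambda(G)f_0^{(m)}$ for $m\ge1$ together with the exponential decay of $1-f_0$ at $+\infty$ and of $f_0$ at $-\infty$, needed to bound $\sup_x|x^l\mc TG(x)|$; this, and the accompanying density statement, are routine, so I do not expect a genuine difficulty. Finally, I would record the probabilistic meaning of the process just built, which is used in the sequel: since $\theta_t^{n,*}(x/n)=n^{-1/2}\big(J_t^n(x)-\bb E_n[J_t^n(x)]\big)$,
\[
\mc Y_t^{n,*}(f_0)=\frac{1}{\sqrt n}\sum_{x\in\bb Z}\big(J_t^n(x)-\bb E_n[J_t^n(x)]\big)\big(f_0\big((x+1)/n\big)-f_0(x/n)\big),
\]
a weighted average of the centered currents concentrated near the origin (since $f_0'$ is), i.e. the ``mollified current'' that carries the constant of integration relating $\theta_t^n$ to $\mc Y_t^n$.
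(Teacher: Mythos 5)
Your argument is correct, and it reaches the lemma by a genuinely different and more elementary route than the paper. You exhibit the process explicitly as $\mc Y_t^{n,*}(f_0):=\<\theta_t^{n,*},f_0'\>$ and reduce the identity to the purely deterministic facts $\Lambda(f_0')=1$ and $\mc T f_0'=0$, together with a density/continuity extension of \eqref{ec19} from compactly supported to Schwartz test functions of zero mean; no probabilistic input is used at all. The paper instead constructs $\mc Y_t^{n,*}(f_0)$ as the $L^2(\bb P_n)$-limit of $\mc Y_t^{n,*}(f_0^l)$ along the smooth cut-offs $f_0^l(x)=f_0(x)g(x/l)$, invoking the decomposition \eqref{ec.M1} and the martingale, energy and Kipnis--Varadhan bounds to show that each of the three pieces $M_t^n(f_0^l)$, $\mc I_t^n(f_0^l)$, $\mc A_t^n(f_0^l)$ is Cauchy, and then defines $\mc Y_t^{n,*}(f_0)$ as the sum of the limits. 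What that heavier construction buys is precisely the semimartingale decomposition $\mc Y_t^{n,*}(f_0)=M_t^n(f_0)+\mc I_t^n(f_0)+\mc A_t^n(f_0)$, which is what is actually used immediately afterwards to derive \eqref{ec20} and to pass to the limit in the proof of Theorem \ref{t6}; with your definition that decomposition is not immediate and would have to be recovered by essentially the paper's cut-off argument anyway. What your route buys is a cleaner existence proof --- and since the identity determines the process uniquely (test it against any $G$ with $\Lambda(G)=1$), your process necessarily coincides with the paper's --- together with the explicit representation of $\mc Y_t^{n,*}(f_0)$ as a mollified centered current, which the paper only describes informally.
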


\begin{proof}
Our first task is to prove the existence of the process $\mc Y_t^{n,*}(f_0)$. The idea is to introduce a  cut-off like the one introduced by Rost and Vares. But this time we need to do it in a smooth way. Let us consider the bump function $\zeta: \bb R \to [0,\infty)$ defined by $\zeta(x) = c\exp\{-1/x(1-x)\}$ for $x \in [0,1]$ and $\zeta(x)=0$ otherwise. The constant $c$ is chosen in such a way that $\int g(x) dx =1$. Starting from this bump function we define the interpolating function $g: \bb R \to [0,1]$ as
\[
g(x)=
\begin{cases}
1, & \text{if } x \leq 0\\
1- \int_0^x \zeta(y)dy, &\text{if } 0<x \leq 1\\
0, & \text{if } 1 \leq x.
\end{cases}
\]

Notice that $g'(x) = -\zeta(x)$. We define then the cut-off functions $f_0^l:\bb R \to \bb R$ as $f_0^l(x) = f_0(x)g(x/l)$. We have that $f_0^l \in \mc S(\bb R)$ and 
\[
\frac{d f_0^l}{dx}(x)-f_0'(x) = f_0'(x)\big(g(x/l)-1\big)-\frac{1}{l}f_0(x) \zeta(x/l).
\]
Both terms in this last sum converge to 0 in $L^2(\bb Z)$. Therefore, $\|f_0^l - f_0\|_{1,n} \to 0$ as $l \to \infty$ for any $n$. A more careful computation reveals that the convergence is uniform in $n$. It can also be checked that $\Delta_x^n f_0^l$ converges to $\Delta_x^n f_0$ in $L^2(\bb Z)$ and the convergence is uniform in $n$. From \eqref{ec.M1} we know that
\[
\mc Y_t^{n,*}(f_0^l) = M_t^n(f_0^l) + \mc I_t^n(f_0^l) + \mc A_t^n(f_0^l).
\]

In Section \ref{s2.1} we obtained the estimate $\bb E_n[M_t^n(G)^2] \leq C(\rho) t \|G\|_{1,n}^2$, valid for any function $G \in \mc S(\bb R)$. In particular, the sequence $\{M_t^n(f_0^l);l \in \bb N\}$ is a Cauchy sequence in $L^2(\bb P_n)$. Therefore, for any $t \in [0,T]$ there exists a limit process $M_t^n(f_0)$ such that 
\[
\lim_{l \to \infty} \bb E_n[(M_t^n(f_0^l) -M_t^n(f_0))^2] =0.
\]
By Doob's inequality, the limit is a martingale and we also have convergence at the process level. The same is also true for the processes $\mc I_t^n(f_0^l)$ and $\mc A_t^n(f_0^l)$:
\[
\bb E_n\big[\big(\mc I_t^n(f_0^l) -\mc I_t^n(f_0^m)\big)^2\big] \leq \frac{C(\rho) t^2}{n} \sum_{x \in \bb Z}Ê\big(\Delta_x^n f_0^l -\Delta_x^n f_0^m\big)^2,
\]
\[
\bb E_n\big[\big(\mc A_t^n(f_0^l) -\mc A_t^n(f_0^m)\big)^2\big] \leq \frac{C(\rho) t^{3/2}}{n} \sum_{x \in \bb Z}Ê\big(\nabla_x^n f_0^l -\nabla_x^n f_0^m\big)^2.
\]

Therefore, the limiting processes $\mc I_t^n(f_0)$ and $\mc A_t^n(f_0)$ are also well defined. The convergence at the process level follows from tightness arguments, like in Section \ref{s2.1}. 

At this point we just {\em define} $\mc Y_t^{n,*}(f_0)$ using \eqref{ec.M1}:
\[
\mc Y_t^{n,*}(f_0) = M_t^n(f_0) + \mc I_t^n(f_0) + \mc A_t^n(f_0). 
\]

For $G \in \mc S(\bb R)$ such that $\int G(x)dx =0$, the lemma follows from \eqref{ec19} and an approximation procedure as above. By linearity, we only need to prove the lemma for a single function $G$ such that $\int G(x) dx \neq 0$. For arbitrary functions, the lemma follows from linearity. But we have already done it in the previous section. In fact, at a formal level, what we proved was the identity $\<\theta_t^{n,*},\delta_0\> = \mc Y_t^{n,*}(H_0)$, where $\delta_0$ is the $\delta$ of Dirac at $x=0$ (notice that $\theta_t^{n,*}(0) = \theta_t^n(0)$). A similar argument proves the same identity for the bump function $\zeta$. Notice that we are not able to show convergence at the level of processes for the approximations when $\int G(x) dx \neq 0$, but only in the sense of finite-dimensional distributions. But this is enough to our purposes, since we have already proved that $\mc Y_t^{n,*}(f_0)$ is well defined as a process, and finite-dimensional distributions characterize processes in $\mc D(\mc S'(\bb R), [0,T])$.
\end{proof}

Now we have the elements to prove Theorem \ref{t6}. From \eqref{ec.M1} and Lemma \ref{l2}, we have
\begin{equation}
\label{ec20}
\<\theta_t^{n,*},G\> = \mc I_t^n(\mc T_0 G) + \mc A_t^n(\mc T_0 G) + M_t^n(\mc T_0 G).
\end{equation}

By Proposition \ref{p11}, in order to prove tightness of $\{\theta_t^{n,*}; t \in [0,T]\}; n \in \bb N\}$ it is enough to prove tightness of the projections $\{\<\theta_t^{n,*},G\>; t \in [0,T]\}; n \in \bb N\}$. But this follows as in Section \ref{s2.1}, since the estimates for the processes at the right-hand side of \ref{ec20} only depend on the regularity of $\frac{d}{dx} \mc T_0 G = G$. We have already noticed that $\{\theta_0^n(x); x \in \bb R\}$ converges to a two-sided Brownian motion. Therefore, tightness of $\{\{\theta_t^n; t \in [0,T]\}; n \in \bb N\}$ follows. We are only left with the problem of identifying the limit points of $\theta_t^n$. As in Section \ref{s2.2}, we denote by $n$ a subsequence such that all the processes above converge in distribution to the corresponding limit. 
Notice that $\Delta_x^n \mc T_0 G = \mc T_0 \Delta_x^n G$ for any $G \in \mc S(\bb R)$. Therefore, 
\[
 \int_0^t \mc Y_s^n(\Delta_x^n \mc T_0 G) ds =\int_0^t \mc Y_s^n(\mc T_0 \Delta_x^n G) ds, 
\]
and $\mc I_t^n(\mc T_0 G)$ converges to $\frac{1}{2}\varphi'(\rho)\int_0^t \<\theta_s^n,G''\>ds$ in distribution as $n \to \infty$. Since $\nabla_x^n \mc T_0 G= \mc T_0 \nabla_x^n G$, the process $M_t^n(\mc T_0 G)$ converges to $\mc M_t(G)$, where $\mc M_t$ is a space-time white noise of variance $\chi(\rho)\varphi'(\rho)$. It remains to identify the limit of $\mc A_t^n$ in terms of $\theta_t$. It is enough to write $\mc A_t^{n,\epsilon n}(G)$ in terms of $\theta_t^n$. We have
\[
\mc Y_t^n(i_\epsilon(x)) = \frac{\theta_t^n(x)-\theta_t^n(x+\epsilon)}{\epsilon},
\]
and therefore the limit process $\mc A_t$ satisfies
\[
\mc A_t(\mc T_0 G) = \lim_{\epsilon \to 0} \int_0^t \int_{\bb R}  \int_x^{x+\epsilon} \frac{\beta''(\rho)}{2\epsilon} \Big\{\Big(\frac{\theta_s(x+\epsilon)-\theta_s(x)}{\epsilon}\Big)^2-\frac{\chi(\rho)}{\epsilon}\Big\} G(y) dy dx ds.
\]

Notice the Wick renormalization factor $\chi(\rho)/\epsilon$ in this formula. This factor is now needed because $\int G(x) dx \neq 0$.

Passing to the limit in \eqref{ec20}, we see that  any limit point $\{\theta_t; t \in [0,T]\}$ of the sequence $\{\theta_t^n; t \in [0,T]\}; n \in \bb N\}$ satisfies
\[
\<\theta_t,G\> = \<\theta_0,G\> + \frac{\varphi'(\rho)}{2} \int_0^t \<\theta_s,G''\> ds + \mc B_t(\mc G) + \mc M_t(G),
\]
where $\mc B_t(G) = \mc A_t(\mc T_0 G)$ and $\mc M_t(G)$ is a martingale of quadratic variation $\chi(\rho)\varphi'(\rho)\int G(x)^2 dx$, which proves that $\{\theta_t;t \in [0,T]\}$ is a weak solution of \eqref{ec.KPZ0}. In a similar way, we can prove that $\{\theta_t; t \in [0,T]\}$ is an energy solution of \eqref{ec.KPZ0}.

\appendix

\section{Equivalence of ensembles}
\label{A1}
In this appendix we prove Proposition~\ref{p8}. Let $f: \Omega \to \bb R$ be a local function and assume without loss of generality that there exists $l \in \bb N$ with $\supp(f) \subseteq \{1,\dots,l\}$. Our first observation is that $f$ is a linear, finite combination of functions of the form $\prod_{x \in A} \eta(x)$, where $A \subseteq \{1,\dots,l\}$. Since the thesis of Proposition~\ref{p8} and \ref{p9} are preserved under linear transformations, it is enough to prove the propositions for functions of the form  $\prod_{x \in A} \eta(x)$. We call these functions {\em monomials}. The random variable $\eta^k(0)$ and also the measures $\nu_\rho$, $\nu_{k,m}$ are exchangeable, in the sense that they remain unchanged under a permutation of the random variables $\{\eta(1),\dots,\eta(k)\}$. Therefore, it is enough to prove the propositions for functions of the form $f(\eta)= \eta(1)\cdots\eta(l)$ with $l \in \bb N$. Fix $l \in \bb N$ and take $k \geq l$. Let us recall the definition $\Psi(k,x) = E[f| \eta^k(0) = x]$ for $x$ of the form $m/k$, with $m=0,1,\dots,k$. The conditional expectation $E[f|\eta^k(0)]$ is easy to compute:
\begin{equation}
\label{ec.A2}
E[f|\eta^k(0)=m/k] = \prod_{i=0}^{l-1} \frac{m-i}{k-i} = \prod_{i=0}^{l-1} \frac{k}{k-i} \prod_{i=0}^{l-1} \Big(\frac{m}{k}- \frac{i}{k}\Big).
\end{equation}
Let us call $a_{k,l}$ the first product in the last display. Notice that $a_{k,l}$ is uniformly bounded in $k$, and it converges to $1$ as $k \to \infty$. Developing the second product we have the expansion
\begin{equation}
\label{ec.A1} 
\Psi(k,x)  = a_{k,l} \sum_{i=0}^l \frac{p_i}{k^i}x^{l-i}.
\end{equation}

The coefficients $p_i$ do not depend on $m$ or $k$. Therefore, all the powers of order smaller than $n-1$ in \eqref{ec.A1} are at most of order $1/k^2$, uniformly in $x$ (recall that $0 \leq x \leq 1$).  Therefore, there exists a constant $K_1$ such that 
\[
\sup_{x} \big|\Psi(k,x) - a_{k,l}\big(p_0 x^l + \frac{p_1}{k} x^{l-1}\big)\big| \leq \frac{K_1}{k^2}.
\]
Now we just need to compute $p_0$ and $p_1$. The constant $p_0$ is equal to 1, since in the last product in \eqref{ec.A2} each factor is monic. By the same reason, the coefficient $p_1/k$ is equal to minus the sum of the roots of the aforementioned polynomial. Therefore, $p_1 = -l(l-1)/2$. Up to here we have proved that 
\begin{equation}
\label{ec.A3}
\sup_x \big|\Psi(k,x) - a_{k,l}\big(x^n - \frac{l(l-1)}{2k} x^{n-1}\big)\big| \leq \frac{K_1}{k^2}.
\end{equation}
Now we turn into $a_{k,l}$. It is easier to expand $1/a_{k,l}$. In fact, we have that
\[
\frac{1}{a_{k,l}} = \sum_{i=0}^{l-1}\frac{p_i}{k^i} = 1 - \frac{l(l-1)}{2k} + \frac{r(k)}{k^2},
\]
where $r(k)$ is bounded in $k$. Using the expansion $(1-\epsilon +O(\epsilon^2))^{-1} = 1+\epsilon + O(\epsilon^2)$, we see that
\[
a_{k,l} = 1 +\frac{l(l-1)}{2k} +\frac{\tilde r(k)}{k^2},
\] 
for another function $\tilde r(k)$ bounded in $k$. Putting this asymptotic expansion for $a_{k,l}$ back into  \eqref{ec.A3}, we conclude that there is a constant $c_{eq}$ which only depends on $l$ such that
\[
\sup_x  \Big|\Psi(k,x) - \Big(1+\frac{l(l-1)}{2k}\Big)x^l +\frac{l(l-1)}{2k} x^{l-1}\Big| \leq \frac{c_{eq}}{k^2}.
\]
Remember that for this particular choice of $f$ we have $\varphi(x) = x^l$ and $\varphi''(x) = l(l-1) x^{l-2}$. Replacing above $x^l$ by $\varphi(x)$ and $l(l-1) x^{l-1}$ by $x \varphi''(x)$, Proposition~\ref{p8} is proved.

\section*{Acknowledgements}
P.G. would like to thank the hospitality of Universit\'e Paris-Dauphine, where this work was initiated and of Instituto Nacional de Matem\'atica Pura e Aplicada, IMPA, where this work was finished.  P.G. thanks to CMAT (Portugal) and F.C.T. for finantial support and to F.C.G. for the prize to the research project: "Hydrodynamic Limit of particle systems".
M.J. would like to thank M. Gubinelli for stimulating discussions.

\end{document}